\documentclass[sn-mathphys-num]{sn-jnl}
\usepackage{graphicx}
\usepackage{multirow}
\usepackage{amsmath,amssymb,amsfonts}
\usepackage{amsthm}
\usepackage{mathrsfs}
\usepackage[title]{appendix}
\usepackage{xcolor}
\usepackage{textcomp}
\usepackage{manyfoot}
\usepackage{booktabs}
\usepackage{algorithm}
\usepackage{algorithmicx}
\usepackage{algpseudocode}
\usepackage{listings}
\newcommand{\doiurl}[1]{\href{https://doi.org/#1}{\nolinkurl{#1}}}
\newcommand{\weburl}[1]{\href{#1}{\nolinkurl{#1}}}

\theoremstyle{thmstyleone}
\newtheorem{theorem}{Theorem}
\newtheorem{proposition}[theorem]{Proposition}
\theoremstyle{thmstyletwo}
\newtheorem{example}{Example}
\newtheorem{remark}{Remark}
\newtheorem{corollary}{Corollary}
\newtheorem{lemma}{Lemma}
\theoremstyle{thmstylethree}
\newtheorem{definition}{Definition}

\begin{document}

\title[Mutually orthogonal anti-Latin squares]{Mutually orthogonal anti-Latin squares}

\author[1]{\fnm{Eishiro} \sur{Aoyama}}
\equalcont{Alphabetical order}
\author[1]{\fnm{So} \sur{Hasegawa}}
\author*[2,3]{\fnm{Masahito} \sur{Hayashi}}\email{hmasahito@cuhk.edu.cn; masahito@math.nagoya-u.ac.jp}
\author[1]{\fnm{Tomoki} \sur{Sagara}}

\affil[1]{\orgname{Tokai Senior High School}, \orgaddress{\street{Higashi-ku}, \city{Nagoya}, \postcode{461-0003}, \state{Aichi}, \country{Japan}}}

\affil[2]{\orgdiv{School of Data Science}, \orgname{The Chinese University of Hong Kong, Shenzhen}, \orgaddress{\street{Longgang District}, \city{Shenzhen}, \postcode{518172}, \state{Guangdong}, \country{China}}}

\affil[3]{\orgdiv{Graduate School of Mathematics}, \orgname{Nagoya University}, \orgaddress{\street{Chikusa-ku}, \city{Nagoya}, \postcode{464-8602}, \state{Aichi}, \country{Japan}}}

\abstract{
Anti-Latin squares were introduced in connection with non-linear secure network coding, and the extremal problem for large mutually orthogonal families is motivated by that setting. We study the maximum size $N_A(d)$ of a family of mutually orthogonal anti-Latin squares of order $d$. We prove that $N_L(d)+1\le N_A(d)\le N_L(d)+2$ for every $d\ge 3$, where $N_L(d)$ denotes the classical maximum size of a family of mutually orthogonal Latin squares of order $d$, and we show that in fact $N_A(3)=N_L(3)+1$ whereas $N_A(d)=N_L(d)+2$ for every $d\ge 4$. The upper bound is obtained by passing through balanced matrices, while the lower bound is given by a deterministic permutation argument. For all $d\ge 8$, and also for the exceptional order $d=6$, the upper bound is shown to be attainable by a general probabilistic construction. On the structural side, we show that a saturated family of size $d+1$ induces an affine plane of order $d$, and that the saturated case is characterized by the existence of an anti-coordinate grid decomposition; after transporting this condition to the fixed cell set $[d]^2$, it becomes a direction-completeness condition on the corresponding row-blocks and column-blocks. The remaining small orders are treated separately: $d=3$ is handled by direct analysis and classification of orthogonal triples, $d=4$ by an explicit saturated construction and an analysis of its finite-geometric structure, and $d=5$ and $d=7$ by explicit saturated examples arising from the random-grid framework. Thus $N_A(d)$ is determined in terms of $N_L(d)$ for every $d\ge3$, and its
numerical value is obtained explicitly for every $3\le d\le9$.
}

\keywords{anti-Latin square, mutually orthogonal squares, extremal combinatorics, affine plane, secure network coding}
\pacs[MSC Classification]{05B15,68M10}

\maketitle

\section{Introduction}\label{S1}

Latin squares and their mutually orthogonal families are classical objects in combinatorics and design theory \cite{Euler,Fisher,Preece}. A central extremal quantity is $N_L(d)$, the maximum size of a family of mutually orthogonal Latin squares of order $d$. When $d$ is a prime power, it is known that $N_L(d)=d-1$ \cite{MacNeish,Mann}. More generally, if $d=\prod_{j=1}^k p_j^{n_j}$, then the classical MacNeish lower bound asserts that $N_L(d)\ge \min_j p_j^{n_j}-1$ \cite{MacNeish,Mann}. The exact determination of $N_L(d)$ is in general difficult and remains a longstanding problem in combinatorics \cite{Bose,Bose2,Roberts,Kouvela,Dey,Benad,Boyadzhiyska,Jager,Donovan}.

In this paper, we study the corresponding extremal problem for anti-Latin squares. Anti-Latin squares were introduced in connection with non-linear secure network coding \cite{H1,H2}. In that setting, orthogonality is needed for correct transmission, while a biased arrangement is useful for concealing information from an eavesdropper. Anti-Latin squares capture this non-uniformity, and mutually orthogonal anti-Latin squares therefore arise naturally as combinatorial objects in secure network coding \cite{H1,H2}.

A $d\times d$ matrix on a set of $d$ symbols is called an anti-Latin square of order $d$ if at least one symbol repeats in every row and in every column, and each symbol appears exactly $d$ times in the whole matrix. For example,
\[
A_1 :=
\left(
\begin{array}{ccc}
1&1&2\\
0&2&2\\
0&1&0
\end{array}
\right)
\]
is an anti-Latin square of order $3$. Two anti-Latin squares $(a_{i,j})$ and $(b_{i,j})$ are called orthogonal if the map $(i,j)\mapsto (a_{i,j},b_{i,j})$ is one-to-one. For instance, the two anti-Latin squares
\[
A_2 :=
\left(
\begin{array}{ccc}
1&0&0\\
1&2&1\\
2&2&0
\end{array}
\right),
\qquad
A_3 :=
\left(
\begin{array}{ccc}
2&1&2\\
1&1&0\\
2&0&0
\end{array}
\right)
\]
are orthogonal. Orthogonal pairs of anti-Latin squares already play a key role in the construction of non-linear secure network codes over the one-hop relay network, and examples exist for every integer $d\ge 3$ \cite{H1}.

Let ${\cal A}(d)$ denote the set of anti-Latin squares of order $d$, and let $N_A(d)$ denote the maximum size of a mutually orthogonal family in ${\cal A}(d)$. The present paper addresses the extremal problem of determining $N_A(d)$ and clarifies its relation to the classical Latin-square quantity $N_L(d)$ \cite{H2}.

Our first result shows that the anti-Latin extremal problem is always very close to the classical one: for every $d\ge 3$, we have $N_L(d)+1\le N_A(d)\le N_L(d)+2$. The main extremal result of the paper resolves this ambiguity completely, as stated in the following theorem.
\begin{theorem}\label{thm:main-exact-formula}
We have
\[
N_A(3)=N_L(3)+1,\qquad
N_A(d)=N_L(d)+2 \quad (d\ge 4).
\]
\end{theorem}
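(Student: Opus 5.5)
The proof splits into three parts: a uniform two-sided bound $N_L(d)+1\le N_A(d)\le N_L(d)+2$, attainment of the upper bound for every $d\ge4$, and a separate treatment of $d=3$.

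\textbf{Upper bound via balanced matrices.} Call a $d\times d$ matrix on $d$ symbols \emph{balanced} if each symbol occurs exactly $d$ times. Anti-Latin squares are balanced by definition. The standard counting argument for orthogonal families gives:
\begin{itemize}
\item In a mutually orthogonal family of balanced matrices, each pair of matrices meets every symbol pair exactly once.
\item Adjoining the row-index and column-index matrices, the whole configuration defines a net of degree $m+2$ in which lines are symbol classes.
\end{itemize}
A mutually orthogonal family of $m$ balanced matrices thus gives at most $d+1$ parallel classes on $d^2$ points. The key observation is that a Latin square is exactly a balanced matrix orthogonal to both the row matrix $R$ and the column matrix $C$. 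Given $m$ mutually orthogonal anti-Latin squares, none is orthogonal to $R$ or $C$, since every row and every column contains a repeat. So I would build a net from the anti-Latin squares themselves. Pick two of them, $A_1,A_2$, and relabel cells by $(A_1,A_2)$-values; this is a bijection onto $[d]^2$. In these coordinates the remaining $m-2$ squares become balanced matrices orthogonal to both coordinate matrices, i.e. Latin squares. They are mutually orthogonal, so $m-2\le N_L(d)$. This yields the upper bound $m\le N_L(d)+2$.

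\textbf{Lower bound and attainment.} For $N_A(d)\ge N_L(d)+1$, I would start from a maximal MOLS family $L_1,\dots,L_n$ together with $R$ and $C$; these form $n+2$ mutually orthogonal balanced matrices. Then apply a permutation $\sigma$ of the $d^2$ cells, and read all matrices back in the standard grid. The permuted matrices stay mutually orthogonal and balanced, so the requirement is that each of the chosen $n+1$ become anti-Latin. Equivalently, for each square and each row or column line of the grid, the cells of that line must contain a repeated symbol of that square. I would construct $\sigma$ deterministically by choosing new grid rows and columns transversal-free with respect to the chosen classes. This is possible because only $n+1$ of the $n+2$ classes need to be anti-Latin, and one class can be sacrificed.

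For the upper bound at $d\ge4$, all $n+2$ classes must be anti-Latin simultaneously. This amounts to finding a grid decomposition of the $d^2$ points into $d$ row-blocks and $d$ column-blocks, each of size $d$, meeting pairwise in one point, such that no block is a transversal of any of the $n+2$ parallel classes (the anti-coordinate condition). The plan is as follows:
\begin{itemize}
\item For $d\ge8$ and $d=6$, use a random grid. Choose a random pair of orthogonal partitions and union-bound the probability that some block is a transversal of some class; the expected number of transversal blocks is small, roughly $2d(n+2)\cdot d!/d^d$ times a combinatorial factor.
\item For $d=4,5,7$, exhibit explicit saturated examples and verify them directly.
\end{itemize}

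\textbf{The case $d=3$.} Here $N_L(3)=2$, and one must show no four mutually orthogonal anti-Latin squares of order 3 exist. Four such squares would make the net complete, an affine plane of order 3, i.e. $AG(2,3)$ with four parallel classes. Every line of the standard grid then has 3 points, and the anti-Latin condition requires that it not be a transversal of any parallel class, so each grid line lies within a line of $AG(2,3)$ or meets some class twice. A 3-set in $AG(2,3)$ that is not a line is a triangle, and it is a transversal of exactly one of the four classes, since its three sides use three directions. Hence every row-block and column-block must be a line of the plane. But distinct rows are disjoint lines, hence parallel, and the same holds for columns. A row line and a column line must meet in exactly one point, so rows and columns are two different parallel classes. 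These lines are transversals of the other classes, including any class corresponding to a square, which is a contradiction. Hence $N_A(3)\le3$, and the lower bound gives equality.

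\textbf{Main obstacle.} I expect the hardest step to be the probabilistic attainment for $d\ge8$. A union bound needs $d!$-versus-$d^d$ estimates that are uniform over an unknown family of $N_L(d)+2\le d+1$ classes. In the saturated affine-plane case, dependencies between blocks of one grid must also be controlled. I would try choosing row-blocks uniformly and then column-blocks conditioned on them, and bound the transversal probability per block by $d!/d^d\cdot\binom{d^2}{d}/\ldots$ with careful conditioning.
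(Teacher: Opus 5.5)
Your upper bound, which relabels cells by the $(A_1,A_2)$-values so that the remaining squares become MOLS, is exactly the paper's balanced-matrix argument. You should drop your second bullet, since $R$ and $C$ are not orthogonal to non-Latin balanced matrices, as you yourself note. Your plan for $d\ge4$ also matches the paper: random cell permutation for $d\ge8$ and $d=6$, and explicit saturated families for $d=4,5,7$.

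Your $d=3$ upper bound is a different and correct route. Four squares give an affine plane of order $3$, and there every $3$-set is a transversal of some parallel class. A line is a transversal of the three other classes. A non-collinear triple, whose three pairs lie in three distinct classes, is a transversal of the remaining one. So some square has a repetition-free row. The detour through ``rows are parallel lines'' is unnecessary, because a line is already a transversal of three square classes. The paper instead normalizes all $(0,0)$ entries to $0$ and observes that the six possible first rows fall into three mutually incompatible pairs. That is more elementary and bypasses the net/affine-plane correspondence. Yours is exactly the order-$3$ instance of Theorem~\ref{TH4}: no direction-complete $3$-set exists.

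The genuine gaps are on the constructive side. First, $N_A(d)\ge N_L(d)+1$ is only asserted. ``One class can be sacrificed'' does not say how to choose $\sigma$. In your plan this bound is precisely what supplies $N_A(3)\ge3$, so the $d=3$ equality is not established.

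The missing idea is to let the sacrificed Latin square $B^1$ provide a common transversal. Take $\Lambda=\{(k,j(k)):k\in[d]\}$, its symbol-$0$ class, and shift cyclically along $\Lambda$ via $\sigma(k,j(k))=(k+1,j(k+1))$, fixing every other cell. Each row and each column then contains exactly one moved cell. Since $\Lambda$ is a transversal for every other $B^l$ (by orthogonality with $B^1$) and for $R$ and $C$ (since $j$ is a permutation), the moved-in value differs from the removed one. It therefore duplicates a value in an unmoved cell of that line. The constant lines of $R$ and $C$ repeat trivially because $d\ge3$. For the theorem alone, an explicit orthogonal triple of order $3$ would also suffice.

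Second, the probabilistic step is left open, and the obstacle you anticipate does not arise: no conditioning is needed. Under a uniformly random permutation of all $d^2$ cells, each row or column image is marginally a uniform $d$-subset. A balanced matrix has exactly $d^d$ transversal $d$-subsets. So each of the $2dK$ bad events has probability exactly $d^d/\binom{d^2}{d}$, and the union bound does not care about dependence.

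Using $\binom{d^2}{d}\ge\frac{d^{2d}}{d!}\cdot\frac{d+1}{2d}$, one gets $2d(d+1)d^d/\binom{d^2}{d}\le 4d^2\,d!/d^d$. This decreases in $d$ and equals $315/512$ at $d=8$. At $d=6$, however, $K\le d+1$ gives a bound near $2$, so you must invoke Tarry's $N_L(6)=1$; then $K=3$ and the bound is $34992/40579<1$. For $d=4,5,7$ the union bound exceeds $1$ at the true value $K=d+1$. So the explicit saturated families are essential content, not a formality. The paper gives one for $d=4$ and computer-found ones for $d=5,7$; your proposal does not supply them.
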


Thus, apart from the single exceptional order $d=3$, the maximum number of mutually orthogonal anti-Latin squares is always exactly two larger than the corresponding Latin-square extremal number.

For clarity, the contributions of the paper may be viewed on three levels.
First, we establish the universal comparison between $N_A(d)$ and $N_L(d)$
and derive the exact formula in Theorem~\ref{thm:main-exact-formula}.
Second, we identify the geometric structure underlying saturated families,
that is, families of size $d+1$.
Third, we complete the low-order analysis: we classify the extremal triples
for $d=3$, analyze the additional finite-geometric structure forced in the
saturated case $d=4$, and give explicit saturated constructions for $d=5$
and $d=7$.

The geometric ingredients used in the saturated case belong to several
classical correspondences in finite design theory. Complete nets, affine
planes, orthogonal arrays of strength two, and complete families of mutually
orthogonal Latin squares are different representations of closely related
incidence structures
\cite{Bruck1951,Dembowski,Stinson,BoseBush,AbelColbournDinitzMOLS}.
Our contribution is not a new version of this classical correspondence.
Rather, we show that the anti-Latin condition selects an additional pair of
orthogonal resolutions of the point set such that none of their blocks is
transversal to any parallel class. Equivalently, a saturated family is
characterized by an affine plane equipped with what we call an
anti-coordinate grid decomposition.

In the Desarguesian model $AG(2,q)$, this additional condition can be
expressed in terms of directions determined by point sets. The classical
direction problem studies the number and structure of directions determined
by a single affine point set
\cite{LovaszSchrijver,SzonyiDirections,SomlaiRedei,KissSomlaiSpecialDirections}.
The present problem instead requires two orthogonal partitions of
$AG(2,q)$ into $q$-point sets, with every block in both partitions
determining all $q+1$ directions. Thus the relevant object is a globally
compatible system of direction-complete sets, rather than a single point set
with a prescribed set of directions.

The proof combines general arguments with separate treatments of the small
orders. We first obtain the upper bound by passing through balanced matrices
and the lower bound by a deterministic cell permutation. A probabilistic
cell-permutation argument attains the upper bound for every $d\ge 8$ and also
for $d=6$. The remaining cases are handled individually: $d=3$ by direct
analysis and classification, $d=4$ by an explicit saturated construction and
a finite-geometric analysis, and $d=5$ and $d=7$ by explicit constructions
arising from the random-grid framework. Consequently, the numerical value of
$N_A(d)$ is determined explicitly for every $3\le d\le 9$.

The paper is organized as follows.
Section~\ref{sec:deterministic} establishes the general extremal bounds by
introducing balanced matrices, proving $N_A(d)\le N_L(d)+2$, and obtaining the
deterministic lower bound $N_A(d)\ge N_L(d)+1$.
Section~\ref{Sec3} gives the probabilistic cell-permutation argument for
$d\ge 8$ and $d=6$, and records the random-grid sampling procedure used for
the explicit prime-power constructions.
Section~\ref{sec:geom} develops the geometric reformulation, passing from
orthogonal balanced matrices to nets and characterizing saturated families by
affine planes with anti-coordinate grid decompositions; it then gives the
corresponding direction formulation on the fixed cell set $[d]^2$.
Section~\ref{sec:small-ex} treats the remaining small orders, including the
classification for $d=3$ and the geometric analysis for $d=4$, and completes
the proof of the main theorem. The appendices provide the explicit examples
for $d=5$ and $d=7$ together with reproducibility information.
\section{Deterministic approach}\label{sec:deterministic}
\subsection{Upper bound for $N_A(d)$}
\label{sec:upper}
Throughout this paper, we employ the symbol set $[d]:=\{0,1,\dots,d-1\}$. 
We begin this section by focusing on
a mutually orthogonal family of balanced matrices.

\begin{definition}[Balanced matrix]\label{def:balanced-matrix-general}
Let $d\ge 2$.
A $d\times d$ matrix over a symbol set of size $d$ is called
\emph{balanced} if each symbol appears exactly $d$ times in the matrix.
\end{definition}

Let $N_B(d)$ denote the maximum size 
of a mutually orthogonal family of balanced matrices.
Given a mutually orthogonal family of Latin squares,
$B^{1}, \ldots, B^{K'}$, 
we define the following matrices $R$ and $C$ as
\begin{align}
R_{k,j}=k,\quad
C_{k,j}=j
\end{align}
for $k,j \in[d]$.
The definition of Latin squares implies that 
$B^{1}, \ldots, B^{K'},R,C$ are mutually orthogonal.
Hence, we have $N_B(d)\ge N_L(d)+2$.
Conversely, let
$B^{1},\ldots,B^{K}$ be a mutually orthogonal family of balanced matrices.
Orthogonality of $B^{1}$ and $B^{2}$ implies that the map
\[
 x\longmapsto \bigl(B^{1}_x,B^{2}_x\bigr)
 \qquad (x\in[d]^2)
\]
is a bijection from the cell set onto $[d]^2$.  Let
$\sigma:[d]^2\to[d]^2$ be its inverse, with the two coordinates ordered so
that
\[
 B^{1}_{\sigma(k,j)}=k,
 \qquad
 B^{2}_{\sigma(k,j)}=j.
\]
For a matrix $B$, write $\sigma(B)_{k,j}:=B_{\sigma(k,j)}$.  Then
$\sigma(B^{1})=R$ and $\sigma(B^{2})=C$.  For every $l\ge3$, orthogonality of
$B^l$ with $B^1$ and $B^2$ implies that each symbol occurs exactly once in
every row and every column of $\sigma(B^l)$; hence
$\sigma(B^{3}),\ldots,\sigma(B^{K})$ form a mutually orthogonal family of
Latin squares.  Therefore $K-2\le N_L(d)$, which gives
$N_B(d)\le N_L(d)+2$.  Thus, we have
\begin{align}
N_B(d)= N_L(d)+2.
\end{align}
Since the definition of an anti-Latin square contains the balanced condition, 
the relation $N_A(d) \le N_B(d) $ holds. 
Therefore, we have the following theorem. 
\begin{theorem}\label{thm:NA-upper-via-MOLS}
The relation 
\begin{align}
N_A(d)\le N_L(d)+2 \label{BV1}
\end{align}
holds.
\end{theorem}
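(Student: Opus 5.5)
The plan is to factor the inequality as $N_A(d)\le N_B(d)\le N_L(d)+2$. The first step is purely definitional. Every anti-Latin square of order $d$ is a $d\times d$ matrix over $[d]$ in which each symbol occurs exactly $d$ times, so it is balanced in the sense of Definition~\ref{def:balanced-matrix-general}. Orthogonality for anti-Latin squares and for balanced matrices is the same condition, namely injectivity of $(i,j)\mapsto(a_{i,j},b_{i,j})$. Hence any mutually orthogonal family in ${\cal A}(d)$ is also a mutually orthogonal family of balanced matrices, which gives $N_A(d)\le N_B(d)$. If the family has size at most $2$ there is nothing to prove, since $N_L(d)\ge 0$.

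The substantive step is $N_B(d)\le N_L(d)+2$. I would take a mutually orthogonal family $B^1,\dots,B^K$ of balanced matrices with $K\ge 3$. Because $B^1$ and $B^2$ are orthogonal, the map $x\mapsto(B^1_x,B^2_x)$ is an injection from a set of size $d^2$ into $[d]^2$, hence a bijection. Let $\sigma$ be its inverse and relabel every matrix by $\sigma(B)_{k,j}=B_{\sigma(k,j)}$. Then $\sigma(B^1)=R$ and $\sigma(B^2)=C$, and relabeling preserves orthogonality because it is just a bijection of the cell set.

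Next I would check that each $\sigma(B^l)$ with $l\ge3$ is Latin. Orthogonality of $B^l$ and $B^1$ means the $d^2$ pairs $(k,\sigma(B^l)_{k,j})$ are distinct. Row $k$ has only $d$ cells, so the $d$ entries of $\sigma(B^l)$ in row $k$ must be distinct, that is, they form a permutation of $[d]$. The same argument applied with $C$ gives the columns. So $\sigma(B^3),\dots,\sigma(B^K)$ are mutually orthogonal Latin squares, and therefore $K-2\le N_L(d)$.

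Combining the two inequalities yields \eqref{BV1}. There is no serious obstacle. The only point needing care is to confirm that the balancedness and orthogonality definitions match verbatim between the two settings. The bijection and counting argument in the third paragraph is routine, and all of it is already carried out in the preceding discussion.
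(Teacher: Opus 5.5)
Your proposal is correct and follows the paper's proof essentially step for step. The paper also factors through $N_A(d)\le N_B(d)$, using that anti-Latin squares are balanced. It then proves $N_B(d)\le N_L(d)+2$ with the same pull-back along the inverse $\sigma$ of $x\mapsto(B^1_x,B^2_x)$, so that $\sigma(B^1)=R$ and $\sigma(B^2)=C$, and orthogonality to $R$ and $C$ forces $\sigma(B^3),\dots,\sigma(B^K)$ to be mutually orthogonal Latin squares.
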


\subsection{Lower bound for $N_A(d)$}\label{S2}
We next prove the deterministic lower bound.
\begin{theorem}\label{thm:3}
For every $d\ge3$,
\begin{align}
N_A(d)\ge N_L(d)+1. \label{BV3}
\end{align}
\end{theorem}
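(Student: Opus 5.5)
The plan is to reuse the balanced-matrix machinery of Section~\ref{sec:upper}, together with one observation: permuting cells preserves both balance and mutual orthogonality. Take a mutually orthogonal family of Latin squares $L^{1},\ldots,L^{K}$ with $K=N_L(d)$ and adjoin $R$. By the discussion preceding Theorem~\ref{thm:NA-upper-via-MOLS}, $R,L^{1},\ldots,L^{K}$ is a mutually orthogonal family of $K+1$ balanced matrices. For a bijection $\pi:[d]^2\to[d]^2$ set $\pi(B)_x:=B_{\pi(x)}$. Then $\pi(R),\pi(L^{1}),\ldots,\pi(L^{K})$ are still balanced and mutually orthogonal, since the map $x\mapsto(\pi(B)_x,\pi(B')_x)$ is the old bijection composed with $\pi$. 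The whole task therefore reduces to choosing one permutation $\pi$ such that every row and every column of each of these $K+1$ matrices contains a repeated symbol.

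It helps to rephrase this combinatorially. Each cell $x$ carries a label vector $(R_x,L^{1}_x,\ldots,L^{K}_x)$, and by orthogonality two distinct cells agree in at most one coordinate. Choosing $\pi$ means arranging the $d^2$ cells into a new $d\times d$ grid. We need every new row and every new column to contain, for each of the $K+1$ coordinates, a pair of cells agreeing in that coordinate. Since $K+1\le d$ and each pair of cells supplies at most one coincidence, a new line of $d$ cells must realize $K+1$ distinct coincidences. The natural shape for this is a cycle $x_0,x_1,\ldots,x_{d-1},x_0$ in which consecutive cells agree in prescribed coordinates, giving up to $d$ coincidences.

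Concretely, I will use cyclic structure. First, by relabeling rows, columns and symbols, I may assume each $L^{l}$ has first row $0,1,\ldots,d-1$, so the cells of old row $0$ realize every symbol of every $L^{l}$. I will then build new row $i$ from cells of old row $i$ and old row $i+1$ (indices mod $d$). Cells in the same old row automatically supply the repeat for $\pi(R)$. For each $l$, I will include two cells, one from each of the two old rows, whose $L^{l}$-values coincide; such cells exist because every row of a Latin square contains every symbol. The columns of the new grid are defined dually by a shifted assignment, so that each new column also mixes cells from two old rows in the same way.

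The main obstacle is doing this for rows and columns simultaneously. Each cell lies in exactly one new row and one new column, and for $K=d-1$ every line must spend essentially all $d$ of its cells on coincidences. My first attempt will be an explicit algebraic $\pi$ of the form $(a,b)\mapsto(a+f(b),\,b+g(a))$, checking the coincidence conditions through the Latin property of each $L^{l}$. If that fails in the saturated case, I will instead start from the identity arrangement, where $R$ is already good on rows, and repair the bad lines by a sequence of disjoint swaps of pairs of cells. The repair would rely on the fact that a swap changes only the two rows and two columns it touches, together with a counting argument for $d\ge3$.
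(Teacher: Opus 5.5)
\textbf{There is a genuine gap: you never construct the permutation.} Your reduction is legitimate. A common cell permutation preserves balance and orthogonality. Your family $R,L^1,\ldots,L^K$ (dropping $C$) becomes, after re-coordinatizing by $R$ and $L^1$ as in the balanced-matrix argument, the same type of family as the paper's $R,C,B^2,\ldots,B^{K'}$ (dropping one Latin square). But the proposal stops exactly where the proof has to happen. No permutation $\pi$ is specified and verified, and you yourself call the simultaneous row/column condition the main obstacle. You offer only two untested strategies: an algebraic $\pi$ of the form $(a,b)\mapsto(a+f(b),b+g(a))$, and swap-repair with an unspecified counting argument.

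\textbf{The shape you sketch cannot work as stated.} Suppose every new row $i$ lies inside old rows $i$ and $i+1$. Then a new column lying inside two old rows $r,r+1$ can meet only new rows $r-1,r,r+1$, so for $d\ge4$ it cannot meet all $d$ new rows. Suppose instead you meant two old \emph{columns}. An old column contains no repetition of $R$ or of any Latin $L^l$, so all $K+1$ required repetitions must come from cross pairs, each giving at most one coincidence. With $b$ and $d-b$ cells from the two columns there are only $b(d-b)$ such pairs. For $d=3$, where $K=N_L(3)=2$, that is at most $2<3$. A square-independent algebraic $\pi$ is also hard to control: which coincidences occur between cells in different rows and columns depends on the particular squares, not just on their Latin property. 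In the saturated case, every such pair agrees in exactly one $L^l$.

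\textbf{The missing idea is to let the discarded parallel class define the permutation.} The paper discards $B^1$, keeps $R$ and $C$, and cyclically shifts the $d$ cells of one symbol class $\Lambda=\{(k,j(k))\}$ of $B^1$. Each row and each column then changes in exactly one cell. The remaining Latin squares take distinct values along $\Lambda$ by orthogonality with $B^1$. So the new entry differs from the removed one and therefore duplicates an unchanged entry of that line. $R$ and $C$ keep $d-1\ge2$ equal entries in their own direction, and acquire a repeat in the other direction because $j(k+1)\ne j(k)$.

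In your own normalization the same idea works if you cycle one old column $c$, which is a class of the discarded $C$. Take new rows $N_k=(\{k\}\times([d]\setminus\{c\}))\cup\{(k+1,c)\}$. Take new columns to be the symbol classes of $L^1$, each with its column-$c$ cell $(r,c)$ replaced by $(r+1,c)$. Each $N_k$ meets each modified class exactly once. Verifying the anti-Latin property is again the one-changed-cell argument, because column $c$ of every $L^l$ is Latin and each $L^1$-class is a transversal for every $L^l$ with $l\ge2$. Note that the new columns follow $L^1$; they are not mixtures of two old lines.
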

\begin{proof}
Let $B^1,\ldots,B^{K'}$ be a mutually orthogonal family of Latin squares of
maximum size $K'=N_L(d)$, and let $R$ and $C$ be the row- and column-coordinate
matrices
\[
 R_{k,j}=k,
 \qquad
 C_{k,j}=j
 \qquad(k,j\in[d]).
\]
Then $B^1,\ldots,B^{K'},R,C$ are mutually orthogonal.  It is enough to
construct one permutation of the cell set such that
$B^2,\ldots,B^{K'},R,C$ become anti-Latin, because a common cell permutation
preserves balance and pairwise orthogonality.

Fix the symbol $0$ in $B^1$ and set
\[
 \Lambda:=\{(k,j)\in[d]^2:B^1_{k,j}=0\}.
\]
Since $B^1$ is Latin, $\Lambda$ meets every row and every column in exactly one
cell.  Thus there is a permutation $j:[d]\to[d]$ such that
\[
 \Lambda=\{(k,j(k)):k\in[d]\}.
\]
All additions in the row index below are taken modulo $d$.  Define a
permutation $\sigma$ of $[d]^2$ by cyclically shifting the cells of $\Lambda$,
\[
 \sigma(k,j(k))=(k+1,j(k+1)),
\]
and fixing every cell outside $\Lambda$.  For any array $A$, write
\[
 \sigma(A)_{k,j}:=A_{\sigma(k,j)}.
\]

We first consider a Latin square $B^l$ with $l\ge2$.  Orthogonality of $B^1$
and $B^l$ implies that the values
\[
 B^l_{k,j(k)},\qquad k\in[d],
\]
are all distinct, because they occur in the cells where $B^1=0$.  Hence
\[
 B^l_{k+1,j(k+1)}\ne B^l_{k,j(k)}.
\]
In row $k$, the only changed cell is $(k,j(k))$.  Its new value is
$B^l_{k+1,j(k+1)}$, which differs from the removed value.  Since row $k$ of
$B^l$ originally contains every symbol exactly once, the new value already
occurs in one of the other, unchanged cells of that row.  Therefore row $k$ of
$\sigma(B^l)$ contains a repeated symbol.

Now fix a column $c$.  Since $j$ is a permutation, there is a unique $k$ with
$c=j(k)$, and $(k,c)$ is the only changed cell in column $c$.  Its new value
again differs from the removed value.  Since column $c$ of $B^l$ originally
contains every symbol exactly once, that new value occurs in another cell of
the column, and this other cell is unchanged.  Hence every column of
$\sigma(B^l)$ also contains a repeated symbol.  Thus $\sigma(B^l)$ is
anti-Latin for every $l\ge2$.

It remains to check the coordinate matrices.  In row $k$ of $\sigma(C)$, the
entry at column $j(k)$ changes from $j(k)$ to $j(k+1)$.  Since
$j(k+1)\ne j(k)$, the same value $j(k+1)$ also occurs at the unchanged cell
$(k,j(k+1))$.  Thus every row of $\sigma(C)$ has a repetition.  In each column
$c=j(k)$, all cells except $(k,c)$ still contain the value $c$; since $d\ge3$,
this gives a repetition in every column.  Hence $\sigma(C)$ is anti-Latin.

Similarly, in row $k$ of $\sigma(R)$, all cells outside $(k,j(k))$ retain the
value $k$, so every row has a repetition.  In column $j(k)$, the changed cell
has value $k+1$, and the unchanged cell in row $k+1$ of the same column also
has value $k+1$ because $j(k+1)\ne j(k)$.  Thus every column has a repetition,
and $\sigma(R)$ is anti-Latin.

Consequently,
\[
 \sigma(B^2),\ldots,\sigma(B^{K'}),\sigma(R),\sigma(C)
\]
form a mutually orthogonal family of $K'+1=N_L(d)+1$ anti-Latin squares.
\end{proof}
\section{Probabilistic approach}\label{Sec3}
\subsection{The attainability of \eqref{BV1}}
Let $B^1,\ldots,B^K$ be a mutually orthogonal family of balanced matrices on
the cell set $[d]^2$.  A common permutation of the cells preserves balance and
pairwise orthogonality.  We show that, for $d\ge8$ and also for $d=6$, a
uniformly random cell permutation has positive probability of making every
matrix anti-Latin.

\begin{theorem}\label{thm:NA-equals-NL-plus-2-for-all-d-ge-8}
When $d\ge8$ or $d=6$, we have
\[
 N_A(d)=N_L(d)+2.
\]
\end{theorem}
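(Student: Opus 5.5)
By Theorem~\ref{thm:NA-upper-via-MOLS} it suffices to prove $N_A(d)\ge N_L(d)+2$. Take a maximum family $B^1,\ldots,B^{K}$ of mutually orthogonal balanced matrices with $K=N_B(d)=N_L(d)+2$. Concretely, these are $N_L(d)$ MOLS together with $R$ and $C$. Apply a uniformly random permutation $\pi$ of the $d^2$ cells to all of them simultaneously. As noted above, this preserves balance and pairwise orthogonality. So we only need to show that, with positive probability, every row and every column of every $\pi(B^l)$ contains a repeated symbol.

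The first step is to bound the probability of the bad event $E_{l,L}$ that a fixed line $L$ (a row or a column) of $\pi(B^l)$ is rainbow. The $d$ cells of $L$ receive a uniformly random $d$-subset of the original cells, and $B^l$ is balanced with each symbol occurring exactly $d$ times. The number of $d$-subsets of cells that are rainbow for $B^l$ is $d^d$, since we choose one of the $d$ cells of each symbol. Hence
\[
\Pr(E_{l,L}) = \frac{d^d}{\binom{d^2}{d}}=:p_d ,
\]
which does not depend on $l$ or $L$.

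The second step is a union bound over the $K\cdot 2d$ pairs $(l,L)$. Using $K\le d+1$, it suffices to show
\[
2d(d+1)\,\frac{d^d}{\binom{d^2}{d}}<1 .
\]
Asymptotically, $\binom{d^2}{d}\ge (d^2/d)^d\cdot(\text{correction})$, so a sharper estimate is needed. Write
\[
\binom{d^2}{d}=\frac{d^{2d}}{d!}\prod_{i<d}\Bigl(1-\frac{i}{d^2}\Bigr).
\]
The product is at least $e^{-1/2}\cdot(1-o(1))$, more precisely $\ge 1-\tfrac{d-1}{2d}$ by a Bernoulli-type bound, or simply bounded below by $e^{-c}$. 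This gives
\[
p_d\le \frac{d!\,e^{c}}{d^{d}}\approx e^{c}\sqrt{2\pi d}\,e^{-d}.
\]
Then $2d(d+1)p_d$ decays like $d^{5/2}e^{-d}$, which is below $1$ for all large $d$. The main obstacle is the finite range near the threshold. I would make the bound explicit, using $d!\le e\sqrt d\,(d/e)^d$ and $\prod_i(1-i/d^2)\ge 1-\binom d2/d^2\ge 1/2$. This yields $2d(d+1)p_d\le 4e\,d^{3/2}(d+1)e^{-d}$, which is $<1$ for roughly $d\ge 10$. For the remaining orders $d=8,9$ I would compute $2d(d+1)d^d/\binom{d^2}{d}$ exactly and check that it is below $1$.

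For $d=6$, the point is that $N_L(6)=1$, so $K=3$ rather than $d+1$. The union bound then only needs $3\cdot 12\cdot 6^6/\binom{36}{6}<1$. Since $6^6=46656$ and $\binom{36}{6}=1947792$, we get $36\cdot 46656/1947792\approx0.86<1$, which works. Similarly, for $d=8,9$ I would, if needed, use the exact value $K=N_L(d)+2=d+1$ and the exact ratio. For instance, at $d=8$ the quantity is $144\cdot 8^8/\binom{64}{8}$, which is about $144\cdot 1.68\times10^7/4.43\times10^9\approx 0.55$. At $d=9$ it is even smaller. In every case the probability that no bad event occurs is positive, so some cell permutation makes all $N_L(d)+2$ matrices anti-Latin, giving the required lower bound. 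The case $d=7$ fails this count, which is consistent with the paper treating it separately.
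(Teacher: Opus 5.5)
Your proposal is correct and follows the paper's proof. Both use a uniform random cell permutation, the exact bad-event probability $d^d/\binom{d^2}{d}$, a union bound over $2Kd$ events with $K\le d+1$, the same product expansion of $\binom{d^2}{d}$ with $\prod_t(1-t/d^2)\ge (d+1)/(2d)$, and a separate $d=6$ computation using $N_L(6)=1$. The only difference is in the numerical step: the paper shows $c_d=4d^2\,d!/d^d$ is decreasing and checks $c_8=315/512$, whereas you use $d!\le e\sqrt d\,(d/e)^d$ plus exact checks at $d=8,9$. In fact that Stirling-type bound already gives about $0.74<1$ at $d=8$, and it is decreasing for $d\ge3$.
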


Choose a permutation $\sigma$ of $[d]^2$ uniformly at random and define
$\sigma(B)_{k,j}:=B_{\sigma(k,j)}$.  For $k,j\in[d]$ and
$l\in\{1,\ldots,K\}$, let
\begin{description}
\item[$E^{\rm row}_{k,l}$] be the bad event that row $k$ of $\sigma(B^l)$ has
no repeated symbol;
\item[$E^{\rm col}_{j,l}$] be the bad event that column $j$ of $\sigma(B^l)$
has no repeated symbol.
\end{description}

\begin{lemma}\label{lem:bad-row-column-probability}
For every $k,j\in[d]$ and $l\in\{1,\ldots,K\}$,
\begin{align}
 \Pr(E^{\rm row}_{k,l})
 &=\frac{d^d}{\binom{d^2}{d}}, \label{NM1}\\
 \Pr(E^{\rm col}_{j,l})
 &=\frac{d^d}{\binom{d^2}{d}}. \label{NM2}
\end{align}
\end{lemma}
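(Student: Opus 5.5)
Under a uniformly random permutation $\sigma$ of $[d]^2$, the cells $\sigma(k,0),\ldots,\sigma(k,d-1)$ feeding row $k$ of $\sigma(B^l)$ form a uniformly random ordered $d$-tuple of distinct cells of $[d]^2$. Their underlying unordered set $S_k:=\{\sigma(k,j):j\in[d]\}$ is therefore uniformly distributed over the $\binom{d^2}{d}$ subsets of $[d]^2$ of size $d$. The event $E^{\rm row}_{k,l}$ depends only on the multiset of symbols $\{B^l_x : x\in S_k\}$, not on the order in which they are placed in row $k$. Row $k$ has no repeated symbol if and only if these $d$ symbols are pairwise distinct, i.e.\ (since there are exactly $d$ symbols) $S_k$ contains exactly one cell of each symbol class of $B^l$.

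The plan is then a direct count. Since $B^l$ is balanced, each symbol class $\{x: B^l_x=s\}$ has exactly $d$ cells. The number of $d$-subsets of $[d]^2$ meeting each of the $d$ classes in exactly one cell is therefore $d^d$: choose one cell independently from each class. Dividing by the total number $\binom{d^2}{d}$ of equally likely subsets gives \eqref{NM1}.

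For \eqref{NM2} the argument is identical with the column set $\{\sigma(k,j):k\in[d]\}$ in place of $S_k$. This set is again a uniformly random $d$-subset because $\sigma$ is uniform and the column is a fixed set of $d$ cells. The only point needing care, and it is mild, is justifying the uniformity of the image of a fixed $d$-set under a uniform permutation. This follows since every $d$-subset is the image of that fixed set under exactly $d!\,(d^2-d)!$ permutations, independently of which subset it is.
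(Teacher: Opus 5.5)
Your proposal is correct and follows essentially the same route as the paper: the image of a fixed row (or column) under the uniform permutation is a uniform $d$-subset of $[d]^2$, and the event has no repeated symbol exactly when this subset contains one cell from each of the $d$ balanced symbol classes, which gives $d^d$ favorable subsets out of $\binom{d^2}{d}$. Your count of $d!\,(d^2-d)!$ permutations per image set makes explicit the uniformity claim that the paper simply asserts.
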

\begin{proof}
We prove the row formula; the column formula is identical.  The image under
$\sigma$ of the $d$ cells in a fixed row is a uniformly distributed
$d$-subset of the $d^2$ cells.  Hence there are $\binom{d^2}{d}$ equally likely
image sets.

Because $B^l$ is balanced, each of its $d$ symbol classes contains exactly
$d$ cells.  The selected $d$-subset has no repeated symbol exactly when it
contains one cell from each symbol class.  There are $d$ choices from each of
the $d$ classes, hence $d^d$ such subsets.  This proves \eqref{NM1}.
\end{proof}

\begin{lemma}\label{lem:one-balanced-matrix-random-anti-latin}
For every $l\in\{1,\ldots,K\}$,
\begin{align}
 \Pr\bigl[\sigma(B^l)\text{ is anti-Latin}\bigr]
 \ge 1-2d\frac{d^d}{\binom{d^2}{d}}. \label{NM3}
\end{align}
\end{lemma}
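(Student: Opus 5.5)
The plan is a union bound over the $2d$ bad events attached to the single matrix $\sigma(B^l)$, using Lemma~\ref{lem:bad-row-column-probability}.

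First I record what must be checked. A cell permutation preserves the multiset of entries, so $\sigma(B^l)$ is balanced, since $B^l$ is. Hence $\sigma(B^l)$ is anti-Latin exactly when every row and every column of $\sigma(B^l)$ contains a repeated symbol. In other words, $\sigma(B^l)$ fails to be anti-Latin if and only if at least one of the events $E^{\rm row}_{k,l}$ ($k\in[d]$) or $E^{\rm col}_{j,l}$ ($j\in[d]$) occurs:
\[
 \bigl\{\sigma(B^l)\text{ is not anti-Latin}\bigr\}
 =\bigcup_{k\in[d]}E^{\rm row}_{k,l}\;\cup\;\bigcup_{j\in[d]}E^{\rm col}_{j,l}.
\]

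Next I apply the union bound to these $2d$ events. By \eqref{NM1} and \eqref{NM2}, each event has probability exactly $d^d/\binom{d^2}{d}$. The probability of the union is therefore at most $2d\cdot d^d/\binom{d^2}{d}$. Taking the complement gives \eqref{NM3}.

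I expect no real obstacle. The only point needing care is the first step: balance is automatically preserved by the cell permutation, so the anti-Latin property reduces to the row and column repetition conditions alone.
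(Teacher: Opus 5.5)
Your proposal is correct and matches the paper's proof: balance is preserved by the cell permutation, so the anti-Latin property reduces to the absence of the $2d$ bad row and column events, and the union bound with Lemma~\ref{lem:bad-row-column-probability} gives \eqref{NM3}. Your equality of events is slightly stronger than needed, since only the inclusion of the failure event in the union is used, but it is valid.
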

\begin{proof}
If none of the $2d$ bad events
\[
 E^{\rm row}_{k,l}\quad(k\in[d]),
 \qquad
 E^{\rm col}_{j,l}\quad(j\in[d])
\]
occurs, then every row and every column of $\sigma(B^l)$ contains a repeated
symbol.  Since a cell permutation preserves balance, $\sigma(B^l)$ is then
anti-Latin.  The union bound and
Lemma~\ref{lem:bad-row-column-probability} give
\[
 1-\Pr\bigl[\sigma(B^l)\text{ is anti-Latin}\bigr]
 \le
 \sum_{k\in[d]}\Pr(E^{\rm row}_{k,l})
 +\sum_{j\in[d]}\Pr(E^{\rm col}_{j,l})
 =2d\frac{d^d}{\binom{d^2}{d}}.
\]
\end{proof}

Applying the union bound once more over the $K$ matrices gives the following.
\begin{lemma}\label{LL1}
Under a uniformly random cell permutation $\sigma$,
\begin{align}
 \Pr\bigl[\sigma(B^1),\ldots,\sigma(B^K)
 \text{ are all anti-Latin}\bigr]
 \ge 1-2Kd\frac{d^d}{\binom{d^2}{d}}. \label{NM5}
\end{align}
\end{lemma}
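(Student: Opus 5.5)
The plan is to apply the union bound over all $2Kd$ bad events at once, using Lemma~\ref{lem:bad-row-column-probability} for each individual probability. The events $E^{\rm row}_{k,l}$ and $E^{\rm col}_{j,l}$ are defined for every $l\in\{1,\ldots,K\}$ and $k,j\in[d]$, and the same random permutation $\sigma$ is used for all $l$.

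The first step is to note what happens when no bad event occurs. Then every row and every column of every $\sigma(B^l)$ contains a repeated symbol. Moreover, each $\sigma(B^l)$ is balanced, because a cell permutation does not change symbol counts. So all of $\sigma(B^1),\ldots,\sigma(B^K)$ are anti-Latin. Equivalently, the failure event is contained in the union
\[
 \bigcup_{l=1}^{K}\Bigl(\bigcup_{k\in[d]}E^{\rm row}_{k,l}\cup\bigcup_{j\in[d]}E^{\rm col}_{j,l}\Bigr).
\]

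The second step is to bound the probability of this union by the sum of the individual probabilities. Lemma~\ref{lem:bad-row-column-probability} gives each of them as $d^d/\binom{d^2}{d}$, so the sum is $2Kd\,d^d/\binom{d^2}{d}$.

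A slightly more modular route gives the same bound. For each $l$, Lemma~\ref{lem:one-balanced-matrix-random-anti-latin} bounds the probability that $\sigma(B^l)$ fails to be anti-Latin by $2d\,d^d/\binom{d^2}{d}$. A union bound over the $K$ values of $l$ then multiplies this by $K$. Taking complements yields \eqref{NM5}.

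There is no real obstacle in this argument. The one point to state explicitly is that the union bound needs no independence between the events. This matters because the events for different $l$ are strongly correlated: they all depend on the same permutation $\sigma$, and the matrices $B^l$ are mutually orthogonal.
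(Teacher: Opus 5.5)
Your proposal is correct and matches the paper's argument. The paper applies Lemma~\ref{lem:one-balanced-matrix-random-anti-latin} to each $B^l$ and then takes a union bound over the $K$ matrices, which is your modular route and is equivalent to your direct union bound over all $2Kd$ bad events; your remark that the union bound needs no independence is the right point to state explicitly.
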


The next estimate shows that the right-hand side of \eqref{NM5} is positive
when $K=N_B(d)$ in the required orders.
\begin{lemma}[Numerical estimate for $d\ge8$ and $d=6$]
\label{lem:general-numerical-estimate}
When $d\ge8$ or $d=6$,
\[
 2N_B(d)d\frac{d^d}{\binom{d^2}{d}}<1. \label{NM6}
\]
\end{lemma}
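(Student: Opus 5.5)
The plan is to replace $N_B(d)$ by something explicit and then bound the single-row probability. From Section~\ref{sec:upper} we have $N_B(d)=N_L(d)+2$. The classical bound $N_L(d)\le d-1$ then gives $N_B(d)\le d+1$. So for the orders $d\ge 8$ it suffices to prove
\[
 f(d):=2d(d+1)\frac{d^d}{\binom{d^2}{d}}<1 .
\]
The case $d=6$ cannot be handled this way. Numerically $84\cdot 6^6=3919104>\binom{36}{6}=1947792$, so the crude bound fails there. For $d=6$ I will instead use the classical fact $N_L(6)=1$, which has no orthogonal pair of Latin squares of order $6$. This gives $N_B(6)=3$, and the claim reduces to the direct check
\[
 2\cdot 3\cdot 6\cdot 6^6=1679616<1947792=\binom{36}{6}.
\]

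For $d\ge 8$ the first step is to rewrite the probability factor:
\[
 \frac{d^d}{\binom{d^2}{d}}
 =\frac{d!\,d^d}{\prod_{i=0}^{d-1}(d^2-i)}
 =\frac{d!}{d^d\prod_{i=0}^{d-1}\bigl(1-i/d^2\bigr)} .
\]
To control the denominator I will use $\prod(1-x_i)\ge 1-\sum x_i$. Here $\sum_i i/d^2=(d-1)/(2d)<1/2$, so the product is at least $1/2$. For the numerator I will use the standard Stirling-type bound $d!\le e\sqrt d\,(d/e)^d$. Together these give
\[
 f(d)\le 4e\,(d+1)\,d^{3/2}e^{-d}=:g(d).
\]

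It remains to show $g(d)<1$ for all $d\ge 8$. At $d=8$ a numerical evaluation gives $g(8)\approx 10.87\cdot 9\cdot 22.63\cdot e^{-8}\approx 0.74<1$. For monotonicity, the ratio
\[
 \frac{g(d+1)}{g(d)}=\Bigl(1+\frac{1}{d+1}\Bigr)\Bigl(1+\frac1d\Bigr)^{3/2}e^{-1}
\]
is clearly below $1$ for $d\ge 8$, since the first two factors together are at most about $1.3<e$. Hence $g$ is decreasing on $d\ge 8$, and $f(d)\le g(d)<1$ there.

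The main obstacle is only the bookkeeping at the boundary. The estimate must be tight enough to work already at $d=8$; this is why I use the factor $1/2$ for the product rather than a weaker bound. At $d=6$ one must invoke $N_L(6)=1$ rather than $N_L(d)\le d-1$. It is also worth noting that $d=7$ genuinely fails this inequality ($2\cdot 8\cdot 7\cdot 7^7>\binom{49}{7}$), which is consistent with that order being treated separately.
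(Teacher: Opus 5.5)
Your proof is correct and follows the paper's route. The shared steps are the reduction via $N_B(d)\le d+1$ for $d\ge 8$, the product rewriting with the bound $\prod(1-i/d^2)\ge 1-\sum i/d^2$, and the treatment of $d=6$ via $N_L(6)=1$ with the exact check $1679616<1947792$. The only difference is that you control $d!/d^d$ with the Stirling-type bound $d!\le e\sqrt d\,(d/e)^d$ and evaluate $g(8)$ numerically. The paper instead keeps $c_d=4d^2\,d!/d^d$ exact, shows $c_{d+1}/c_d=(1+1/d)^{2-d}<1$, and computes $c_8=315/512$ exactly.
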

\begin{proof}
Since $N_B(d)\le d+1$, it is sufficient for $d\ge8$ to show
\[
 2d(d+1)\frac{d^d}{\binom{d^2}{d}}<1.
\]
Using
\[
 \binom{d^2}{d}
 =\frac{d^{2d}}{d!}
  \prod_{t=0}^{d-1}\left(1-\frac{t}{d^2}\right),
\]
we obtain
\[
 \frac{d^d}{\binom{d^2}{d}}
 =\frac{d!}{d^d}
  \frac{1}{\displaystyle\prod_{t=0}^{d-1}
  \left(1-\frac{t}{d^2}\right)}.
\]
The elementary inequality
$\prod_t(1-a_t)\ge1-\sum_ta_t$ for $a_t\in[0,1]$ gives
\[
 \prod_{t=0}^{d-1}\left(1-\frac{t}{d^2}\right)
 \ge1-\sum_{t=0}^{d-1}\frac{t}{d^2}
 =\frac{d+1}{2d}.
\]
Therefore
\[
 2d(d+1)\frac{d^d}{\binom{d^2}{d}}
 \le4d^2\frac{d!}{d^d}=:c_d.
\]
Moreover,
\[
 \frac{c_{d+1}}{c_d}
 =\left(1+\frac1d\right)^{2-d}<1
 \qquad(d\ge3),
\]
so $(c_d)$ is decreasing, and
\[
 c_8=4\cdot8^2\frac{8!}{8^8}=\frac{315}{512}<1.
\]
Thus the desired inequality holds for every $d\ge8$.

For $d=6$, the classical equality $N_L(6)=1$ gives $N_B(6)=3$
\cite{Tarry1901,Stinson1984}.  Directly,
\[
 2\cdot3\cdot6\frac{6^6}{\binom{36}{6}}
 =\frac{34992}{40579}<1.
\]
\end{proof}

\begin{proof}[Proof of Theorem~\ref{thm:NA-equals-NL-plus-2-for-all-d-ge-8}]
Take a mutually orthogonal family of $N_B(d)$ balanced matrices.  By
Lemma~\ref{lem:general-numerical-estimate}, the lower bound in \eqref{NM5} is
positive.  Hence some cell permutation makes all $N_B(d)$ matrices
anti-Latin.  Therefore
\[
 N_A(d)\ge N_B(d)=N_L(d)+2.
\]
The reverse inequality is Theorem~\ref{thm:NA-upper-via-MOLS}.
\end{proof}
\subsection{Concrete algorithm: random-grid sampling}\label{sec:random-grid}
Let $q$ be a prime power.  On $\mathbb F_q^2$, define
\[
 \pi_m(x,y):=y-mx\quad(m\in\mathbb F_q),
 \qquad
 \pi_\infty(x,y):=x.
\]
The fibres of the $q+1$ maps $\pi_m$ are the parallel classes of the standard
affine plane.  Equivalently, after labeling their fibres, the arrays
$L^m_{x,y}:=\pi_m(x,y)$ form a mutually orthogonal family of $q+1$ balanced
matrices.

A trial of the random-grid procedure samples a uniformly random bijection
\[
 \tau:[q]^2\longrightarrow\mathbb F_q^2.
\]
For every external row and column and every direction $m$, it tests whether
the corresponding $q$-point image has two points in one fibre of
$\pi_m$.  If all tests succeed, the line-membership arrays reconstructed from
$\tau$ are anti-Latin and mutually orthogonal.

\begin{algorithm}[t]
\caption{Random-grid sampling on $\mathbb F_q^2$}
\label{alg:random-grid}
\begin{algorithmic}[1]
\Require A prime power $q$ and a positive trial limit $T$
\Ensure Either a family of $q+1$ mutually orthogonal anti-Latin squares of
order $q$, or failure
\For{$t=1,\ldots,T$}
 \State Sample a uniformly random bijection
 $\tau:[q]^2\to\mathbb F_q^2$
 \State Set \textsc{success}$\gets$ true
 \For{each $m\in\mathbb F_q\cup\{\infty\}$}
  \For{each $i,j\in[q]$}
   \State Test whether the multiset
   $\{\pi_m(\tau(i,j')):j'\in[q]\}$ has a repeated value
   \State Test whether the multiset
   $\{\pi_m(\tau(i',j)):i'\in[q]\}$ has a repeated value
   \If{either test fails}
    \State Set \textsc{success}$\gets$ false
    \State Skip the remaining tests and proceed to the next trial
   \EndIf
  \EndFor
 \EndFor
 \If{\textsc{success}}
  \State Define $A^{(m)}_{i,j}:=\pi_m(\tau(i,j))$ for all
  $m\in\mathbb F_q\cup\{\infty\}$ and $i,j\in[q]$
  \State \Return $\{A^{(m)}\}_{m\in\mathbb F_q\cup\{\infty\}}$
 \EndIf
\EndFor
\State \Return failure
\end{algorithmic}
\end{algorithm}

The algorithm is a finite search procedure.  The probabilistic argument above
is not needed to verify an output: once the arrays are displayed, their
balancedness, anti-Latin row/column condition, and pairwise orthogonality can
be checked directly.
\section{Geometric reformulation}\label{sec:geom}
In this section, we extract the geometric structure underlying families of
pairwise orthogonal balanced matrices, and hence in particular families of
mutually orthogonal anti-Latin squares.

The passage from pairwise orthogonal balanced matrices to a net is the
standard symbol-class construction, and the fact that a complete
$(d,d+1)$-net is an affine plane is classical
\cite{Bruck1951,Dembowski,Stinson,Evans2018}.  Likewise, the connections among
complete nets, affine planes, orthogonal arrays of strength two, and complete
families of mutually orthogonal Latin squares are well established
\cite{BoseBush,AbelColbournDinitzMOLS,Stinson}.

The new point here is the treatment of the external row and column
partitions.  In the classical incidence structure they are auxiliary
resolutions of the point set, whereas the anti-Latin condition requires every
block of both resolutions to fail to be a transversal to every parallel
class.  Theorem~\ref{TH4} identifies this additional incidence condition
exactly.  It thereby separates the classical affine-plane structure carried
by orthogonality and balance from the extra simultaneous resolution condition
carried by the anti-Latin requirement.

Accordingly, the purpose of the present section is to pass from orthogonal
balanced matrices to $(d,K)$-nets, identify the saturated case with the
affine-plane case, and formulate the additional anti-Latin condition as an
anti-coordinate grid condition.  We then compare the resulting condition with
the classical single-set direction problem and give its finite-field form.
\subsection{From orthogonal matrices to a $(d,K)$-net}
\label{subsec:general-prob-net}

We begin by recording the abstract net structure carried by a family of
pairwise orthogonal balanced matrices, in the language of finite nets
from design theory \cite{Bruck1951,Evans2018}.

\begin{definition}[$(d,K)$-net]\label{def:dK-net-general}
Let $P$ be a finite set with $|P|=d^2$.
A family of partitions
\(
\Pi_1,\Pi_2,\dots,\Pi_K
\)
of $P$ is called a \emph{$(d,K)$-net} if the following conditions hold:
\begin{enumerate}
\item for each $t\in\{1,\dots,K\}$, the partition $\Pi_t$ consists of exactly
$d$ blocks, each of size $d$;
\item for any $t\neq s$, every block of $\Pi_t$ meets every block of $\Pi_s$
in exactly one point.
\end{enumerate}
\end{definition}

The relevance of these notions is that a family of balanced pairwise orthogonal
matrices determines precisely such a net structure.

\begin{lemma}[Balanced orthogonal matrices yield a $(d,K)$-net]
\label{lem:balanced-orthogonal-net}
Let $d\ge 2$, and let
\(
A^{(1)},A^{(2)},\dots,A^{(K)}
\)
be pairwise orthogonal balanced $d\times d$ matrices over $[d]$.
Set
\(
P:=[d]^2,
\)
and for each $t\in\{1,\dots,K\}$ and $a\in[d]$, define
\(
L_a^{(t)}:=\{(u,v)\in P \mid A^{(t)}_{u,v}=a\}.
\)
Then the families
\(
\Pi_t:=\{L_a^{(t)}\}_{a\in[d]}\) with $t=1,\dots,K$
form a $(d,K)$-net on $P$.
\end{lemma}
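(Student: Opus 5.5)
The plan is to check the two conditions of Definition~\ref{def:dK-net-general} directly from the definitions of balance and orthogonality. No earlier result is needed beyond unpacking these definitions. The ground set is $P=[d]^2$, so $|P|=d^2$ holds automatically.

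First, for fixed $t$, the sets $L^{(t)}_a$ with $a\in[d]$ are the fibres of the map $(u,v)\mapsto A^{(t)}_{u,v}$ from $P$ to $[d]$. Hence they are pairwise disjoint and their union is $P$. Balance of $A^{(t)}$ says each symbol $a$ occurs exactly $d$ times, so $|L^{(t)}_a|=d$ for every $a$. In particular every fibre is nonempty, so $\Pi_t$ has exactly $d$ blocks, each of size $d$. This gives condition~1.

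Second, take $t\ne s$ and symbols $a,b\in[d]$. We have
\[
L^{(t)}_a\cap L^{(s)}_b=\{(u,v)\in P : (A^{(t)}_{u,v},A^{(s)}_{u,v})=(a,b)\}.
\]
Orthogonality says the map $(u,v)\mapsto (A^{(t)}_{u,v},A^{(s)}_{u,v})$ is one-to-one from $P$ to $[d]^2$. Both sets have $d^2$ elements, so the map is a bijection. Therefore the preimage of $(a,b)$ consists of exactly one point, which is condition~2.

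There is no real obstacle here. The only point that needs care is upgrading ``one-to-one'' to ``onto'' using the equal cardinalities $d^2$; this is what guarantees that every pair of blocks meets, not merely that they meet in at most one point. With both conditions verified, $\Pi_1,\dots,\Pi_K$ form a $(d,K)$-net on $P$.
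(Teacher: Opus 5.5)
Your proposal is correct and follows essentially the same route as the paper: balance gives a partition of $[d]^2$ into $d$ blocks of size $d$, and orthogonality gives $|L_a^{(t)}\cap L_b^{(s)}|=1$. The one difference is that you explicitly upgrade the one-to-one map to a bijection by counting $d^2$ elements on each side, whereas the paper simply asserts that each ordered pair $(a,b)$ appears exactly once.
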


\begin{proof}
Fix $t$.
Since $A^{(t)}$ is balanced, each set $L_a^{(t)}$ has size $d$.
Since every entry of $A^{(t)}$ has a unique symbol, the sets
$\{L_a^{(t)}\}_{a\in[d]}$ are pairwise disjoint and cover
$P=[d]^2$.
Hence $\Pi_t$ is a partition of $P$ into $d$ blocks of size $d$.

Now let $t\neq s$ and $a,b\in[d]$.
Since $A^{(t)}$ and $A^{(s)}$ are orthogonal, the ordered pair $(a,b)$ appears
exactly once among the $d^2$ positions.
Therefore, we have
\(
|L_a^{(t)}\cap L_b^{(s)}|=1.
\)
Thus $\Pi_1,\dots,\Pi_K$ form a $(d,K)$-net on $P$.
\end{proof}

\subsection{The saturated case: affine planes and anti-coordinate grids}\label{subsec:saturated-geom}

\begin{definition}[Affine plane of order $d$ {\cite{Stinson}}]
An affine plane of order $d$ is a pair
$\Pi=(P,\mathcal L)$,
where $P$ is a finite set of $d^2$ points and $\mathcal L$ is a family of subsets of $P$,
called lines, such that:
\begin{enumerate}
\item each line contains exactly $d$ points;
\item any two distinct points lie on a unique common line;
\item for every line $\ell\in\mathcal L$ and every point $x\in P\setminus \ell$,
there exists a unique line through $x$ disjoint from $\ell$.
\end{enumerate}
Two lines are called \emph{parallel} if they are disjoint.
The set of all lines is then partitioned into $d+1$ \emph{parallel classes},
each consisting of $d$ pairwise disjoint lines.
\end{definition}

By Lemma~\ref{lem:balanced-orthogonal-net}, any mutually orthogonal family of anti-Latin squares determines a $(d,K)$-net, since anti-Latin squares are balanced by definition.
In the saturated case $K=d+1$, this incidence structure is exactly an affine plane of order $d$.

\begin{corollary}\label{CO1}
If there exist $d+1$ mutually orthogonal anti-Latin squares of order $d$, then there exists an affine plane of order $d$.
\end{corollary}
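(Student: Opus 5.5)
The plan is to combine Lemma~\ref{lem:balanced-orthogonal-net} with the classical fact that a complete $(d,d+1)$-net is an affine plane. For completeness, I would verify that fact directly by counting rather than only citing it.

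Let $A^{(1)},\dots,A^{(d+1)}$ be mutually orthogonal anti-Latin squares of order $d$. Since they are balanced, Lemma~\ref{lem:balanced-orthogonal-net} gives a $(d,d+1)$-net $\Pi_1,\dots,\Pi_{d+1}$ on $P=[d]^2$. Take $\mathcal L$ to be the union of all blocks of all $\Pi_t$. Blocks from different classes meet in exactly one point, so they are distinct as sets, and $|\mathcal L|=d(d+1)$. Axiom~1 holds because each block has size $d$.

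The main step is axiom~2. First I would show that two distinct points lie on at most one common line. Two distinct lines from the same class are disjoint, and two lines from different classes meet in exactly one point, so two lines cannot share two points. Next I would count. The number of pairs of distinct points is $\binom{d^2}{2}=\tfrac{d^2(d^2-1)}{2}$. The lines together cover $d(d+1)\binom{d}{2}=\tfrac{d^2(d+1)(d-1)}{2}$ pairs, which is the same number. Since no pair is covered twice, every pair is covered exactly once.

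For axiom~3, fix a line $\ell\in\Pi_t$ and a point $x\notin\ell$. The block of $\Pi_t$ containing $x$ is disjoint from $\ell$, since blocks of one partition are disjoint. Any line through $x$ from another class $\Pi_s$, $s\ne t$, meets $\ell$ in one point. So the line through $x$ disjoint from $\ell$ exists and is unique.

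I expect no real obstacle here. The counting in axiom~2 is the only point where saturation $K=d+1$ is used. The anti-Latin property itself is not needed, only balance and orthogonality.
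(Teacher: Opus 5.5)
Your proposal is correct and follows the same route as the paper: Lemma~\ref{lem:balanced-orthogonal-net} turns the $d+1$ squares into a $(d,d+1)$-net using only balance and orthogonality, and that net is then identified with an affine plane of order $d$. The only difference is that the paper cites this last step as classical, whereas you prove it directly. Your verification is valid: two distinct lines share at most one point, and the count $d(d+1)\binom{d}{2}=\binom{d^2}{2}$ then forces every pair of points onto exactly one line. Distinctness of lines from different classes needs $d\ge 2$, which is automatic here because anti-Latin squares exist only for $d\ge 3$.
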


\begin{proof}
A family of $d+1$ mutually orthogonal anti-Latin squares determines a $(d,d+1)$-net.
It is classical that a $(d,d+1)$-net is equivalent to an affine plane of order $d$ \cite{Bruck1951,Evans2018}.
\end{proof}

From this point on, we work on the point set of an affine plane in an abstract grid-decomposition language.
Thus the symbols $P$, $R_i$, and $C_j$ refer to an abstract point set and two orthogonal partitions of that point set, rather than to the literal rows and columns of the coordinate grid 
$[d]^2$.

Let $Q$ be any set of cardinality $d^2$.  We say that a pair of families
\[
 \mathcal R=\{R_i\}_{i\in[d]},
 \qquad
 \mathcal C=\{C_j\}_{j\in[d]}
\]
forms a \emph{grid decomposition} of $Q$ if each $R_i$ and each $C_j$ has
cardinality $d$, both $\mathcal R$ and $\mathcal C$ are partitions of $Q$,
and
\[
 |R_i\cap C_j|=1
 \qquad(i,j\in[d]).
\]
The sets $R_i$ and $C_j$ are called the row-blocks and column-blocks,
respectively.  Their intersections identify $Q$ with a $d\times d$ grid.
We apply this definition both to the point set $P$ of an affine plane and,
after transport by a bijection, to the fixed cell set $[d]^2$.

\begin{definition}
Let $\Gamma\subset\mathcal L$ be a parallel class of $\Pi$.
A subset $X\subset P$ with $|X|=d$ is called a \emph{$\Gamma$-transversal} if
the relation
$|X\cap \ell|=1$ holds for every line $\ell\in\Gamma$.
A grid decomposition $(\mathcal R,\mathcal C)$ is called an \emph{anti-coordinate grid decomposition} if no member of $\mathcal R\cup\mathcal C$ is a $\Gamma$-transversal for any parallel class $\Gamma$ of $\Pi$.
\end{definition}
There is a small but important distinction between the square-side and the
geometry-side formulations.
On the square side, a family of mutually orthogonal anti-Latin squares is
written on the fixed coordinate grid $[d]^2$, so the relevant row-blocks and
column-blocks are the standard ones,
($\{\{i\}\times [d]\}_{i\in[d]}$, $\{[d]\times \{j\}\}_{j\in[d]}$).
On the geometry side, however, we work with an abstract affine plane
$\Pi=(P,\mathcal L)$ together with an arbitrary grid decomposition
($\mathcal R=\{R_i\}_{i\in[d]}$, $\mathcal C=\{C_j\}_{j\in[d]}$)
of its point set $P$.

This difference is only one of presentation.
Indeed, any such grid decomposition determines a bijection
\[
\tau:[d]^2\to P,
\qquad
\tau(i,j)=R_i\cap C_j,
\]
since each intersection $R_i\cap C_j$ consists of a unique point.
Via this bijection, the abstract grid decomposition may be identified with the
standard coordinate grid.
Conversely, any family of squares on $[d]^2$ yields such a grid decomposition by
taking its literal rows and columns.
Moreover, changing the identification $\tau$ amounts precisely to composing with
a permutation of the cell set $[d]^2$, and such a permutation preserves
balancedness, pairwise orthogonality, and the anti-Latin property.
Thus passing between the square-side and geometry-side descriptions involves no
loss of information.

\begin{theorem}\label{TH4}
Let $\Pi$ be an affine plane of order $d$.
Then the following are equivalent.
\begin{enumerate}
\item[(i)] There exist $d+1$ mutually orthogonal anti-Latin squares of order $d$
on the symbol set $[d]$ and a bijection
$\tau:[d]^2\to P$
such that $\tau$ carries the associated $(d,d+1)$-net of these squares onto the
affine-plane net of $\Pi$.
\item[(ii)] The plane $\Pi$ admits an anti-coordinate grid decomposition.
\end{enumerate}
\end{theorem}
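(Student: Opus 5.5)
The plan is to prove both implications directly, using the bijection $\tau$ to pass between the square side and the geometry side. The key observation, used in both directions, concerns a balanced matrix $A$ and its symbol classes $\{L_a\}_{a\in[d]}$. A row of $A$ has no repeated symbol exactly when the $d$ cells of that row meet each symbol class exactly once. In other words, the row is a transversal of the partition into symbol classes. The same holds for columns. So the anti-Latin condition on $A$ says precisely that no row and no column of $[d]^2$ is a transversal of the partition $\{L_a\}$.

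For (i)$\Rightarrow$(ii), let $A^{(1)},\dots,A^{(d+1)}$ be the given squares and $\tau$ the given bijection. By Lemma~\ref{lem:balanced-orthogonal-net} the squares give a $(d,d+1)$-net on $[d]^2$, and by hypothesis $\tau$ maps each partition $\Pi_t$ onto a parallel class of $\Pi$. Since there are $d+1$ parallel classes and $\tau$ maps blocks to lines, every parallel class arises this way. This last point needs a short check: distinct $\Pi_t$ go to distinct classes because blocks of different $\Pi_t$ intersect, while parallel lines are disjoint. Set $R_i:=\tau(\{i\}\times[d])$ and $C_j:=\tau([d]\times\{j\})$. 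These form a grid decomposition of $P$, because $\tau$ is a bijection and the standard rows and columns form a grid decomposition of $[d]^2$. Suppose some $R_i$ were a $\Gamma$-transversal, and let $\Gamma=\tau(\Pi_t)$. Pulling back by $\tau$, row $i$ would meet every symbol class of $A^{(t)}$ exactly once, so row $i$ of $A^{(t)}$ would have no repeated symbol. This contradicts the anti-Latin property. Columns are handled the same way.

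For (ii)$\Rightarrow$(i), start from an anti-coordinate grid decomposition $(\mathcal R,\mathcal C)$ and define $\tau(i,j)$ to be the unique point of $R_i\cap C_j$. This is a bijection $[d]^2\to P$ by the grid property and a counting argument. Enumerate the parallel classes $\Gamma_1,\dots,\Gamma_{d+1}$, label the $d$ lines of each $\Gamma_t$ by $[d]$, and set $A^{(t)}_{i,j}:=$ the label of the line of $\Gamma_t$ containing $\tau(i,j)$. Each $A^{(t)}$ is balanced because each line has $d$ points. Two matrices $A^{(t)},A^{(s)}$ with $t\ne s$ are orthogonal because two non-parallel lines of an affine plane meet in exactly one point. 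To see this, note that two lines from different classes are not disjoint, and they cannot share two points by the uniqueness axiom. So each label pair $(a,b)$ occurs exactly once, and the associated net is carried onto the plane's net by $\tau$ by construction. Finally, a row of $A^{(t)}$ without repeated symbols would mean that $R_i$ meets each line of $\Gamma_t$ exactly once, i.e.\ $R_i$ is a $\Gamma_t$-transversal. The anti-coordinate hypothesis excludes this, and the same holds for columns. So every $A^{(t)}$ is anti-Latin.

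No serious obstacle is expected. The most delicate points are bookkeeping. First, we must confirm that in (i) the image of the net is the full set of parallel classes, so that the anti-coordinate condition covers every $\Gamma$. Second, we need the standard affine-plane facts used in (ii): that the $d+1$ parallel classes are partitions into $d$ lines, and that lines from distinct classes meet in exactly one point. The definition in the paper already asserts the parallel-class decomposition. The one-point intersection I would derive from axioms 2 and 3 as indicated above.
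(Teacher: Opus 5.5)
Your proposal is correct and follows the paper's proof essentially step for step. In (i)$\Rightarrow$(ii) you pull rows and columns forward via $\tau$ and use the anti-Latin property of the square whose symbol classes map onto $\Gamma$; in (ii)$\Rightarrow$(i) you set $\tau(i,j)$ to be the point of $R_i\cap C_j$, build the squares from line labels, and use the pigeonhole equivalence between ``no repeated symbol'' and ``transversal''. Your extra checks are sound: that every parallel class is the image of some $\Pi_t$, and that lines from distinct classes meet in exactly one point. The paper takes both for granted.
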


\begin{proof}
Assume first \textup{(i)}.
Let $A^{(1)},\dots,A^{(d+1)}$ be the given mutually orthogonal anti-Latin
squares on $[d]^2$, and let $\tau:[d]^2\to P$ be a bijection carrying their
associated $(d,d+1)$-net onto the affine-plane net of $\Pi$.
For each $i\in[d]$, set $R_i:=\tau(\{i\}\times[d])$, and for each $j\in[d]$,
set $C_j:=\tau([d]\times\{j\})$.
Then $\mathcal R=\{R_i\}_{i\in[d]}$ and $\mathcal C=\{C_j\}_{j\in[d]}$ are
partitions of $P$ into $d$ sets of size $d$, and $|R_i\cap C_j|=1$ for all
$i,j\in[d]$, because $\tau$ is a bijection.
Thus $(\mathcal R,\mathcal C)$ is a grid decomposition of $P$.

It remains to show that this grid decomposition is anti-coordinate.
Fix a parallel class $\Gamma$ of $\Pi$.
Since $\tau$ identifies the associated net of the squares with the affine-plane
net of $\Pi$, the lines of $\Gamma$ are exactly the images under $\tau$ of the
symbol classes of one of the squares, say $A^{(t)}$.
Now each row of $A^{(t)}$ contains a repeated symbol, because $A^{(t)}$ is
anti-Latin.
Hence the corresponding row-block $R_i$ cannot meet every line of $\Gamma$ in
exactly one point; in other words, $R_i$ is not a $\Gamma$-transversal.
The same argument applies to each column-block $C_j$.
Therefore $(\mathcal R,\mathcal C)$ is an anti-coordinate grid decomposition of
$\Pi$, proving \textup{(ii)}.

Conversely, assume \textup{(ii)}.
Let $\Gamma_1,\dots,\Gamma_{d+1}$ be the $d+1$ parallel classes of $\Pi$, and
write $\mathcal R=\{R_i\}_{i\in[d]}$ and $\mathcal C=\{C_j\}_{j\in[d]}$ for the
given anti-coordinate grid decomposition.
Since $|R_i\cap C_j|=1$ for all $i,j\in[d]$, there is a unique point
$x_{i,j}\in P$ with $x_{i,j}\in R_i\cap C_j$.
Thus we obtain a bijection $\tau:[d]^2\to P$ by $\tau(i,j)=x_{i,j}$.

For each $t\in\{1,\dots,d+1\}$, choose a bijection $\lambda_t:\Gamma_t\to[d]$,
and define a $d\times d$ matrix $A^{(t)}$ on the symbol set $[d]$ by letting
$A^{(t)}_{i,j}$ be the label under $\lambda_t$ of the unique line of
$\Gamma_t$ containing $x_{i,j}$.

We claim that $A^{(1)},\dots,A^{(d+1)}$ are mutually orthogonal anti-Latin
squares.
First, each $A^{(t)}$ is balanced: if $a\in[d]$, then the entries equal to $a$
are exactly those $(i,j)$ for which $x_{i,j}$ lies on the line
$\lambda_t^{-1}(a)\in\Gamma_t$, and that line has exactly $d$ points.
Next, if $t\neq s$ and $a,b\in[d]$, then the lines
$\lambda_t^{-1}(a)\in\Gamma_t$ and $\lambda_s^{-1}(b)\in\Gamma_s$ meet in a
unique point of $P$, hence in a unique point $x_{i,j}$; therefore the ordered
pair $(a,b)$ appears exactly once among the superposed entries of $A^{(t)}$ and
$A^{(s)}$, so these two squares are orthogonal.
Finally, fix $t$.
Because $(\mathcal R,\mathcal C)$ is anti-coordinate, no $R_i$ is a
$\Gamma_t$-transversal.  Since $R_i$ has $d$ points and $\Gamma_t$ consists of
$d$ lines partitioning the point set, the pigeonhole principle implies that
some line of $\Gamma_t$ contains at least two points of $R_i$; equivalently, some symbol appears at least twice in row $i$ of
$A^{(t)}$.
The same argument applies to each column-block $C_j$, so every column of
$A^{(t)}$ also contains a repeated symbol.
Hence $A^{(t)}$ is anti-Latin.

Thus $A^{(1)},\dots,A^{(d+1)}$ are mutually orthogonal anti-Latin squares of
order $d$.
By construction, for each $t$ the symbol classes of $A^{(t)}$ are precisely the
preimages under $\tau$ of the lines in the parallel class $\Gamma_t$.
Therefore $\tau$ carries the associated $(d,d+1)$-net of these squares onto the
affine-plane net of $\Pi$, proving \textup{(i)}.
\end{proof}

The significance of Theorem~\ref{TH4} is that it separates two different layers
of the saturated problem.
On the one hand, the underlying incidence structure is not new: in the
saturated case, it is exactly an affine plane of order $d$.
On the other hand, the genuinely anti-Latin part is no longer hidden in the
entries of the squares, but is concentrated in the choice of a grid
decomposition, namely in the requirement that no row-block or column-block be
transversal to any parallel class.

Thus the existence of a saturated family is reduced to a two-step question:
first, whether an affine plane of order $d$ exists at all; and second, whether
that plane admits an anti-coordinate grid decomposition.
In this way, Theorem~\ref{TH4} connects the present problem with the classical
theory of affine planes and mutually orthogonal Latin squares, while at the
same time identifying the extra condition that is specific to anti-Latin
squares.
This reformulation is useful both conceptually and technically: it gives an
immediate structural obstruction in orders where affine planes do not exist,
and in the positive cases it provides the geometric framework in which later
coordinate and random-grid constructions are carried out.

\subsection{Directions induced by parallel classes on $[d]^2$}
\label{subsec:desargues}

We now rewrite the geometric condition of Theorem~\ref{TH4} on the fixed cell
set $[d]^2$.
Let $\Pi=(P,\mathcal L)$ be an affine plane of order $d$, and let
$\tau:[d]^2\to P$ be a bijection.
Via $\tau$, each parallel class of $\Pi$ may be viewed as a family of $d$
pairwise disjoint $d$-subsets of $[d]^2$.
For convenience, we refer to these transported parallel classes as
\emph{directions} on $[d]^2$.
This is only a change of language: no field structure on $[d]^2$ is assumed.
The terminology is motivated by the classical finite-geometry literature on
directions determined by point sets \cite{LovaszSchrijver,SzonyiDirections,KissSomlaiSpecialDirections}.

\begin{definition}\label{def:direction-complete}
Let $\Pi=(P,\mathcal L)$ be an affine plane of order $d$, let
$\tau:[d]^2\to P$ be a bijection, and let $X\subset[d]^2$ with $|X|=d$.
For a parallel class $\Gamma$ of $\Pi$, we say that $X$ \emph{determines}
$\Gamma$ if some line in $\Gamma$ contains at least two points of $\tau(X)$.
We say that $X$ is \emph{direction-complete} (with respect to $\tau$) if it
determines every parallel class of $\Pi$.
\end{definition}

\begin{remark}[Relation to the classical direction problem]
\label{rmk:classical-direction-problem}
For a point set $X$ in a Desarguesian affine plane, the classical direction
problem studies the set $D(X)$ of directions determined by differences of
distinct points of $X$, often with emphasis on lower bounds for $|D(X)|$ and
on the structure of sets attaining those bounds
\cite{LovaszSchrijver,SzonyiDirections,SomlaiRedei}.

When $|X|=q$, a direction is not determined precisely when $X$ meets each line
of the corresponding parallel class once.  Thus an undetermined direction is
exactly a direction in which $X$ is a transversal, or equivalently is
equidistributed among the parallel lines
\cite{KissSomlaiSpecialDirections}.  In this terminology, a
direction-complete block is a $q$-point set with no undetermined direction.

The anti-coordinate grid problem considered here imposes this extremal
condition simultaneously on all blocks of two orthogonal partitions.  This
simultaneous resolution requirement is not part of the usual single-set
direction problem.
\end{remark}
\begin{corollary}\label{CO2}
Let $\Pi=(P,\mathcal L)$ be an affine plane of order $d$.
Then the following are equivalent:
\begin{enumerate}
\item[(i)] There exist $d+1$ mutually orthogonal anti-Latin squares of order $d$
on the symbol set $[d]$ and a bijection $\tau:[d]^2\to P$ such that $\tau$
carries the associated $(d,d+1)$-net of these squares onto the affine-plane net
of $\Pi$.

\item[(ii)] There exist a bijection $\tau:[d]^2\to P$ and a grid decomposition
$(\mathcal R,\mathcal C)$ of $[d]^2$ such that every member of
$\mathcal R\cup\mathcal C$ is direction-complete with respect to $\tau$.
\end{enumerate}
Moreover, the passage between \textup{(i)} and \textup{(ii)} is constructive:
from a family as in \textup{(i)} one obtains $(\mathcal R,\mathcal C)$ by taking
the images under $\tau^{-1}$ of the row-blocks and column-blocks on $P$, and
conversely, from any $(\tau,\mathcal R,\mathcal C)$ as in \textup{(ii)} one
recovers a family of $d+1$ mutually orthogonal anti-Latin squares by labeling
the lines in each parallel class of $\Pi$.
\end{corollary}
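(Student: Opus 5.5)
The plan is to deduce Corollary~\ref{CO2} from Theorem~\ref{TH4} by transporting everything along $\tau$. The one point needing care is that in (ii) the bijection $\tau$ and the grid $(\mathcal R,\mathcal C)$ live on opposite sides. The key dictionary, immediate from Definition~\ref{def:direction-complete}, is this: for $X\subset[d]^2$ with $|X|=d$ and a parallel class $\Gamma$, the set $X$ determines $\Gamma$ if and only if $\tau(X)$ is not a $\Gamma$-transversal. Indeed, $\tau(X)$ has $d$ points and $\Gamma$ partitions $P$ into $d$ lines. If no line contains two points, then each line contains exactly one, which is the pigeonhole step already used in the proof of Theorem~\ref{TH4}. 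Consequently, $X$ is direction-complete exactly when $\tau(X)$ is a $\Gamma$-transversal for no parallel class $\Gamma$.

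For (i)$\Rightarrow$(ii), I take the squares and the bijection $\tau$ from (i) and let $\mathcal R,\mathcal C$ be the literal rows $\{i\}\times[d]$ and columns $[d]\times\{j\}$ of $[d]^2$, which trivially form a grid decomposition. The first half of the proof of Theorem~\ref{TH4} shows that $\tau$ of each row and each column is not a transversal to any parallel class, since the anti-Latin property gives a repeated symbol. By the dictionary, every block is then direction-complete with respect to $\tau$. This is exactly the description in the ``moreover'' clause: the blocks are $\tau^{-1}$ of the row-blocks and column-blocks $\tau(\{i\}\times[d])$, $\tau([d]\times\{j\})$ on $P$.

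For (ii)$\Rightarrow$(i), I push the grid forward: $\tau(\mathcal R)=\{\tau(R_i)\}$ and $\tau(\mathcal C)=\{\tau(C_j)\}$ form a grid decomposition of $P$, because $\tau$ is a bijection and so preserves sizes, partitions and the condition $|R_i\cap C_j|=1$. By the dictionary, this pushed-forward grid is an anti-coordinate grid decomposition of $\Pi$. Theorem~\ref{TH4} (ii)$\Rightarrow$(i) then yields $d+1$ mutually orthogonal anti-Latin squares together with a bijection $\tau'(i,j)=\tau(R_i)\cap\tau(C_j)$ carrying their net onto the affine-plane net. Note that $\tau'$ may differ from the original $\tau$ by a cell permutation. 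This is harmless, since statement (i) only asks for the existence of some bijection. The squares are built by labeling the lines of each parallel class, exactly as in the constructive remark.

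The only step needing care is this mismatch between $\tau$ and $\tau'$, i.e., keeping the two descriptions consistent. One option is to set $\tau'=\tau\circ\rho$, where $\rho(i,j)$ is the cell in $R_i\cap C_j$, and observe that a cell permutation preserves balance, orthogonality and the anti-Latin property, as noted before Theorem~\ref{TH4}. Everything else is a routine transfer of definitions.
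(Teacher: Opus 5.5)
Your proposal is correct and is essentially the paper's argument: the same pigeonhole dictionary ($X$ is direction-complete iff $\tau(X)$ is transversal to no parallel class), followed by transporting Theorem~\ref{TH4} along $\tau$; your explicit treatment of $\tau$ versus $\tau'=\tau\circ\rho$ only spells out what the paper leaves implicit. One caution on your optional closing remark: an arbitrary cell permutation does not preserve the anti-Latin property (the proof of Theorem~\ref{thm:3} uses one to create it). What $\rho$ actually does is turn repetition within the blocks of $(\mathcal R,\mathcal C)$ into repetition within literal rows and columns, and your main line never needs the remark anyway, since (i) only asks for some bijection.
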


\begin{proof}
Fix a bijection $\tau:[d]^2\to P$.
For a \(d\)-subset \(X\subset[d]^2\) and a parallel class \(\Gamma\),
\(X\) fails to determine \(\Gamma\) precisely when every line of
\(\Gamma\) contains at most one point of \(\tau(X)\). Since the \(d\)
lines of \(\Gamma\) partition \(P\) and \(|\tau(X)|=d\), this is
equivalent to saying that \(\tau(X)\) meets each line of \(\Gamma\)
in exactly one point.
Thus $X$ is direction-complete if and only if $\tau(X)$ is not transversal to
any parallel class of $\Pi$.
Applying this observation to all row-blocks and column-blocks, the statement is
just Theorem~\ref{TH4} transported from the point set $P$ to the fixed cell set
$[d]^2$ via $\tau$.
\end{proof}

In this form, the anti-coordinate condition is expressed entirely as a condition
on which parallel classes are determined by the row-blocks and column-blocks
when they are viewed on $[d]^2$.
This reformulation is useful because it turns the geometric non-transversality
condition into a concrete combinatorial test on $d$-subsets of $[d]^2$, while
still keeping the underlying affine-plane structure explicit.

\subsection{Finite-field model of directions}
\label{subsec:finite-field-directions}

We now specialize the preceding notion of directions to the standard affine
plane over a finite field.  Recall that, in
Subsection~\ref{subsec:desargues}, a direction on the fixed cell set $[d]^2$
means a parallel class of an affine plane after transport by a bijection from
$[d]^2$ to the point set of that plane.  When $d=q$ is a prime power, we may
take the affine plane to be the standard plane on $\mathbb F_q^2$.  In this
model, the parallel classes admit the usual slope parametrization.

For each finite slope $m\in\mathbb F_q$, let $\Gamma_m$ be the parallel class
consisting of the lines
\[
L_{m,b}:=\{(x,y)\in\mathbb F_q^2\mid y-mx=b\},
\qquad b\in\mathbb F_q.
\]
There is one further parallel class, namely the vertical class
$\Gamma_\infty$, consisting of the lines
$L_{\infty,a}:=\{(x,y)\in\mathbb F_q^2\mid x=a\}$, $a\in\mathbb F_q$.
Here $\infty$ is only a formal symbol used to denote the vertical direction;
it is not an element of $\mathbb F_q$.  Thus, in the finite-field model, the
parallel classes, and hence the directions in the sense of
Subsection~\ref{subsec:desargues}, are indexed by
$\mathbb F_q\cup\{\infty\}$.

Equivalently, introduce the maps
$\pi_m:\mathbb F_q^2\to\mathbb F_q$, $\pi_m(x,y):=y-mx$, for
$m\in\mathbb F_q$, and
$\pi_\infty:\mathbb F_q^2\to\mathbb F_q$, $\pi_\infty(x,y):=x$.
The fibres of $\pi_m$ are the lines in $\Gamma_m$, and the fibres of
$\pi_\infty$ are the lines in $\Gamma_\infty$.

Let $X\subset\mathbb F_q^2$ be a $q$-subset.  Then $X$ determines the finite
direction $\Gamma_m$ if and only if the multiset
$\{\pi_m(P)\mid P\in X\}$, equivalently
$\{y-mx\mid (x,y)\in X\}$, has a repeated value.  Similarly, $X$ determines
the vertical direction $\Gamma_\infty$ if and only if
$\{\pi_\infty(P)\mid P\in X\}$, equivalently
$\{x\mid (x,y)\in X\}$, has a repeated value.  Therefore, in this model,
$X$ is direction-complete if and only if this repeated-value condition holds
for every $u\in\mathbb F_q\cup\{\infty\}$.

Finally, suppose that the external cell set remains the fixed grid $[q]^2$.
After choosing a bijection $\tau:[q]^2\to\mathbb F_q^2$, a $q$-subset
$X\subset[q]^2$ determines the direction indexed by
$u\in\mathbb F_q\cup\{\infty\}$ if and only if the multiset
$\{\pi_u(\tau(i,j))\mid (i,j)\in X\}$ has a repeated value.  Thus
direction-completeness on $[q]^2$ is exactly the repeated-value test applied
to the image $\tau(X)$ in the finite-field affine plane.

\begin{proposition}[Finite-field form of the saturated anti-Latin condition]
\label{prop:finite-field-saturated-anti-latin}
Let $q$ be a prime power, and suppose that the affine plane associated with a
saturated family is identified with the standard affine plane $AG(2,q)$ on
$\mathbb F_q^2$.  Index the $q+1$ squares by
$u\in\mathbb F_q\cup\{\infty\}$ according to the corresponding parallel
classes.  Then a row-block or column-block $X$ satisfies the anti-Latin
repetition condition simultaneously in all $q+1$ squares if and only if, for
every $u\in\mathbb F_q\cup\{\infty\}$, the multiset
\[
 \{\pi_u(P):P\in X\}
\]
contains a repeated value.  Equivalently, $X$ determines all $q+1$
directions of $AG(2,q)$.
\end{proposition}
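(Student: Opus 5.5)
The plan is to unwind the identification in the hypothesis and then reduce everything to the pigeonhole observation already used in the proofs of Theorem~\ref{TH4} and Corollary~\ref{CO2}.

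\textbf{Step 1: fix the setting.} Under the identification of the associated net with $AG(2,q)$, there is a bijection $\tau$ from the cell set to $\mathbb F_q^2$. For each $u\in\mathbb F_q\cup\{\infty\}$, the symbol classes of the square $A^{(u)}$ are carried by $\tau$ onto the lines of $\Gamma_u$, that is, onto the fibres of $\pi_u$. Hence there is a bijection $\lambda_u$ between the symbols and $\mathbb F_q$ with
\[
 A^{(u)}_{i,j}=\lambda_u^{-1}\bigl(\pi_u(\tau(i,j))\bigr).
\]
I identify the block $X$ with its image $\tau(X)$ throughout, as in the last paragraph of Subsection~\ref{subsec:finite-field-directions}.

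\textbf{Step 2: the equivalence for a single $u$.} The anti-Latin repetition condition for $X$ in $A^{(u)}$ says that some symbol occurs at two distinct cells of $X$. Since $\lambda_u$ is a bijection, this holds exactly when two distinct points of $\tau(X)$ lie in the same fibre of $\pi_u$. That is the same as saying the multiset $\{\pi_u(P):P\in X\}$ has a repeated value. Taking the conjunction over all $q+1$ indices $u$ gives the first equivalence.

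\textbf{Step 3: the reformulation in terms of directions.} A repeated value of $\pi_u$ on $X$ means that some line of $\Gamma_u$ contains at least two points of $X$. By Definition~\ref{def:direction-complete}, this is exactly the statement that $X$ determines $\Gamma_u$. Requiring this for every $u$ says that $X$ determines all $q+1$ directions, i.e., $X$ is direction-complete.

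I would also note the equivalent non-transversality form, as in the proof of Corollary~\ref{CO2}. Since $|X|=q$ and the $q$ fibres of $\pi_u$ partition $\mathbb F_q^2$, the absence of a repeat is equivalent to $X$ being a $\Gamma_u$-transversal.

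There is no real obstacle here. The only point needing care is Step 1: the labeling of symbols by lines must be a bijection for each square, so that "repeated symbol" and "repeated value of $\pi_u$" match exactly. This follows from $\tau$ carrying the whole net onto the net of $AG(2,q)$, since each square's symbol partition then maps onto one parallel class.
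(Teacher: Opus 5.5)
Your proposal is correct and follows essentially the same argument as the paper: the fibres of $\pi_u$ are the lines of $\Gamma_u$, so labeling those lines by symbols turns a repeated $\pi_u$-value on $X$ into a repeated symbol in the corresponding row or column of $A^{(u)}$, and applying this for every $u$ gives the statement. Your explicit use of the bijections $\lambda_u$ and $\tau$ only spells out the identification that the paper's proof leaves implicit.
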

\begin{proof}
For a fixed direction $u$, the fibres of $\pi_u$ are exactly the lines of
that parallel class.  Two points of $X$ have the same $\pi_u$-value if and
only if they lie on the same line of direction $u$.  After the lines are
labeled by symbols, this is equivalent to a repeated symbol in the
corresponding row or column of the square indexed by $u$.  Applying this
observation to every $u$ proves the statement.  This is the finite-field form
of Corollary~\ref{CO2}.
\end{proof}

\section{Small-order cases and classifications}\label{sec:small-ex}
We now treat the remaining small orders directly.
The preceding sections develop the general structural framework for saturated
families, while Section~\ref{Sec3} gives its randomized
construction.
However, the cases $d=3$ and $d=4$ require separate arguments.
The different treatments of these two orders reflect a structural distinction: the extremal families for $d=3$ are not saturated, since $N_A(3)<d+1$, whereas those for $d=4$ are saturated and therefore fall within the affine-plane framework of Section~\ref{sec:geom}.
We first
introduce the equivalence notions used in the $d=3$ classification and then
discuss the two orders in turn.  The order-$4$ analysis is geometric rather
than a complete isomorphism classification of saturated families.

\subsection{Definitions of equivalence classes}\label{subsec:small-order-equivalence}

For the classification in the case $d=3$, we use the following notions of
equivalence.  They formalize the square-side operations that preserve the
anti-Latin property and mutual orthogonality.  The subsequent order-$4$
analysis is instead stated directly in the affine-plane language.

\begin{definition}[Weak isomorphism for families of mutually orthogonal anti-Latin squares]\label{def:weak-small-order}
Let $\mathcal{F}=(A^{(1)},\dots,A^{(K)})$ and
$\mathcal{F}'=(B^{(1)},\dots,B^{(K)})$ be two families of $K$
mutually orthogonal anti-Latin squares of order $d$.
We say that $\mathcal{F}$ and $\mathcal{F}'$ are \emph{weakly isomorphic}
if there exist a permutation $\sigma\in S_K$ and symbol permutations
$\pi_t\in S_d$ $(t=1,\dots,K)$ such that, for each $t$, if we define a square
$\widetilde{A}^{(t)}$ by
$
\widetilde{A}^{(t)}_{i,j}:=\pi_t\bigl(A^{(t)}_{i,j}\bigr)
$ for $i,j\in\{1,\dots,d\}$,
then
$\mathcal{F}'=(\widetilde{A}^{(\sigma(1))},\dots,\widetilde{A}^{(\sigma(K))})$.
In the weak notion, the $d\times d$ grid is fixed: rows and columns are not permuted.
\end{definition}

\begin{definition}[Strong isomorphism for families of mutually orthogonal anti-Latin squares]\label{def:strong-small-order}
With the notation of Definition~\ref{def:weak-small-order}, we say that
$\mathcal F=(A^{(1)},\dots,A^{(K)})$ and
$\mathcal F'=(B^{(1)},\dots,B^{(K)})$ are \emph{strongly isomorphic}
if there exist a permutation $\sigma\in S_K$, symbol permutations
$\pi_t\in S_d$ $(t=1,\dots,K)$, and permutations
$\rho,\kappa\in S_d$ such that, for each $t$, if we define
$\widetilde{A}^{(t)}_{i,j}
:=
\pi_t\bigl(A^{(t)}_{\rho(i),\,\kappa(j)}\bigr)$,
then
$\mathcal F'
=
(\widetilde{A}^{(\sigma(1))},\dots,\widetilde{A}^{(\sigma(K))})$.
Thus, in addition to the weak operations, we allow a common row permutation
$\rho$ and a common column permutation $\kappa$, applied simultaneously to
all $K$ squares.

\end{definition}

\subsection{Classification in the case $d=3$}\label{subsec:d3}
\subsubsection{The exact value $N_A(3)=3$}\label{subsubsec:d3-exact}

We first determine the exact value of $N_A(3)$.  
By Theorem~\ref{thm:3}, we have
\(
N_A(3)\ge N_L(3)+1.
\)
Since $N_L(3)=2$, this gives the lower bound
\(
N_A(3)\ge 3.
\)
Thus it remains only to prove the upper bound $N_A(3)\le 3$.

We begin with a simple structural fact about anti-Latin squares of order $3$.

\begin{lemma}\label{lem:d3-no-constant-row}
Let $A$ be an anti-Latin square of order $3$.  
Then no row and no column is constant.  
Equivalently, each row and each column has the form
$(x,x,y)$
up to permutation of its three entries, with $x\neq y$.
\end{lemma}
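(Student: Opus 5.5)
The idea is to combine the row/column repetition condition with the balance condition, and show that a constant row (or column) would force some other line of the square to be repetition-free. By symmetry, since transposition preserves both conditions, it suffices to treat rows. The ``equivalently'' clause is then immediate. A row of length $3$ with a repeated symbol is either constant $(x,x,x)$ or has the form $(x,x,y)$ with $x\ne y$, up to order. So excluding constant rows is exactly the claimed form.

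Suppose, for contradiction, that some row of $A$ is constant, say row $i$ equals $(x,x,x)$. Since $A$ is balanced, the symbol $x$ occurs exactly $3$ times in $A$, and all three occurrences lie in row $i$. Hence $x$ does not occur in the other two rows. Those two rows therefore use only the remaining two symbols $y,z$, and each of $y$ and $z$ occurs exactly three times among these six cells.

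Now look at the columns. Column $j$ consists of $x$ in row $i$ together with two entries from $\{y,z\}$. By the anti-Latin condition, column $j$ must contain a repeated symbol. The symbol $x$ occurs only once in the column, so the two remaining entries must be equal: both $y$ or both $z$. Thus each column contributes an even number ($0$ or $2$) of copies of $y$ to the other two rows. The total number of $y$'s is then even, contradicting the fact that $y$ occurs exactly $3$ times.

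This contradiction shows that no row is constant. The column case follows by the same argument applied to the transpose. No step here is a serious obstacle; the only point needing care is the parity count at the end, which is where balance enters essentially.
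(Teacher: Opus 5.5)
Your proof is correct and follows essentially the same route as the paper. A constant row confines the other two rows to the two remaining symbols, and the column repetition condition forces those two rows to agree in every column. The resulting even count of each remaining symbol then contradicts balance, and the column case follows by transposition.
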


\begin{proof}
Since $A$ is anti-Latin, every row and every column contains a repeated symbol.  
It remains only to exclude the possibility that all three entries in some row are equal.

Assume, for contradiction, that one row is $(x,x,x)$.  Since each symbol
appears exactly three times in the whole square, the symbol $x$ does not occur
in the other two rows.  Hence every entry in those two rows belongs to the
remaining two symbols, say $y$ and $z$.  In every column, the two entries
outside the constant row must be equal, because the entry in the constant row
is $x$ and the column must contain a repeated symbol.  Therefore the two
remaining rows are identical.  But then the total numbers of occurrences of
$y$ and $z$ in these two rows are both even, whereas each of $y$ and $z$ must
occur exactly three times in the whole square.
This is impossible.  
Hence no row is constant.  
The same argument applied to the transpose shows that no column is constant.
\end{proof}

The upper bound $N_A(3)\le 3$ is obtained by normalizing the first rows of a
pairwise orthogonal family.

\begin{lemma}\label{lem:d3-first-row-patterns}
Let $\mathcal F$ be a family of pairwise orthogonal anti-Latin squares of order $3$.  
After independent symbol relabelings, we may assume that the entry at $(0,0)$ is 
equal to $0$ in every square of $\mathcal F$.  
Then the first row of each square in $\mathcal F$ is one of
\[
(0,0,1),\quad (0,0,2),\quad (0,1,0),\quad (0,2,0),\quad (0,1,1),\quad (0,2,2),
\]
and these six patterns split into the three incompatible pairs
\[
\{(0,0,1),(0,0,2)\},\qquad
\{(0,1,0),(0,2,0)\},\qquad
\{(0,1,1),(0,2,2)\}.
\]
For each of these three pairs, at most one pattern can occur among the first rows
of the squares in $\mathcal F$.  
In particular,
the inequality
$|\mathcal F|\le 3$ holds.
Moreover, if $|\mathcal F|=3$, then after reordering the three squares and applying
independent symbol relabelings, their first rows may be normalized to
\[
(0,0,1),\qquad (0,1,0),\qquad (0,1,1).
\]
\end{lemma}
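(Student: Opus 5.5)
The plan is to combine Lemma~\ref{lem:d3-no-constant-row} with a counting argument on the first row that exploits orthogonality.

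\textbf{Step 1: the six patterns.} After relabeling each square so that its $(0,0)$ entry is $0$, Lemma~\ref{lem:d3-no-constant-row} says the first row has the form $(x,x,y)$ up to order, with $x\neq y$. If the first entry $0$ is the repeated symbol, the other $0$ sits in position $1$ or $2$, and the remaining entry is $1$ or $2$. If $0$ is the singleton, positions $1$ and $2$ carry a common symbol $1$ or $2$. This yields exactly the six listed patterns. Each pair consists of the two patterns that place the same "repeated-position" structure: the same two positions are equal and the third differs.

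\textbf{Step 2: incompatibility within a pair.} Suppose two squares $A,B\in\mathcal F$ have first rows from the same pair. Then the partition of the positions $\{0,1,2\}$ into "equal pair" and "singleton" is the same for both rows. Take the two positions $p\neq p'$ where both rows are equal. The superposed pairs $(A_{0,p},B_{0,p})$ and $(A_{0,p'},B_{0,p'})$ then coincide, which contradicts orthogonality. The same holds if both squares have the identical pattern. Hence each pair contributes at most one square, so $|\mathcal F|\le3$.

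\textbf{Step 3: normalization when $|\mathcal F|=3$.} If $|\mathcal F|=3$, each pair is used exactly once. Reorder the squares so that they come from the pairs in the listed order. Within each pair, the two patterns differ only by the transposition $1\leftrightarrow2$ of symbols, which fixes $0$. Applying this relabeling where needed gives the first rows $(0,0,1)$, $(0,1,0)$, $(0,1,1)$.

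The only mildly delicate point is Step~2. The argument must use that a repeated position pair is shared, not merely that the pattern type matches, and the observation that both patterns in a pair have identical equality structure handles exactly this. The main ingredient is the absence of constant rows, which Lemma~\ref{lem:d3-no-constant-row} supplies.
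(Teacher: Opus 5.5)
Your proposal is correct and follows essentially the same route as the paper. Both arguments enumerate the six first-row patterns via Lemma~\ref{lem:d3-no-constant-row}, show that two squares whose first rows come from the same pair repeat an ordered pair in the first row of their superposition, and normalize with the relabeling $1\leftrightarrow 2$. Your Step~2, phrased through the shared pair of equal positions, handles all three pairs uniformly, whereas the paper checks one pair and says the other two are analogous.
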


\begin{proof}
For each square $A\in\mathcal F$, choose a symbol permutation so that the entry
at $(0,0)$ becomes $0$.  
Since symbol relabeling preserves both the anti-Latin property and orthogonality,
we may assume that $A_{0,0}=0$ for every square $A\in\mathcal F$.

Because $A$ is anti-Latin, the first row contains a repeated symbol.  
By Lemma~\ref{lem:d3-no-constant-row}, the first row is not constant.  
Hence the first row must be one of
\[
(0,0,1),\quad (0,0,2),\quad (0,1,0),\quad (0,2,0),\quad (0,1,1),\quad (0,2,2).
\]

These six possibilities split into the three pairs
\[
\{(0,0,1),(0,0,2)\},\qquad
\{(0,1,0),(0,2,0)\},\qquad
\{(0,1,1),(0,2,2)\}.
\]
If two distinct squares had first rows belonging to the same pair, then one
ordered pair would be repeated within the first row of their superposition,
contradicting orthogonality.  
For example, if two squares had first rows $(0,0,1)$ and $(0,0,2)$, then the
ordered pair $(0,0)$ would appear twice in the first row of the superposition.
The other two pairs are excluded in exactly the same way.

Thus at most one pattern from each incompatible pair can occur.  
Since the six admissible first-row patterns are partitioned into three
incompatible pairs, it follows that
\(
|\mathcal F|\le 3.
\)

If $|\mathcal F|=3$, then exactly one pattern from each incompatible pair occurs.
After reordering the three squares and, if necessary, independently relabeling
the nonzero symbols in each square, these three patterns may be normalized to
\[
(0,0,1),\qquad (0,1,0),\qquad (0,1,1).
\]
This proves the lemma.
\end{proof}

\begin{corollary}\label{cor:d3-exact-value}
We have
\[
N_A(3)=3.
\]
\end{corollary}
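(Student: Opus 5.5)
The statement $N_A(3)=3$ follows by combining two results already in hand. For the lower bound we invoke Theorem~\ref{thm:3} with $d=3$: since $3$ is a prime, $N_L(3)=2$ by the classical prime-power result quoted in the introduction, and hence $N_A(3)\ge N_L(3)+1=3$.

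To make this concrete rather than purely existential, we can point to the family in the introduction. The squares $A_1$, $A_2$, $A_3$ are a candidate witness, but the introduction only asserts that $A_2$ and $A_3$ are orthogonal. So either we check the remaining two pairs directly, or we simply rely on the construction in the proof of Theorem~\ref{thm:3}. That construction cyclically shifts the zero-cells of one Latin square inside the standard triple $B^1$, $R$, $C$ (extended by the second MOLS), and it already produces three mutually orthogonal anti-Latin squares of order $3$.

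For the upper bound, let $\mathcal F$ be any mutually orthogonal family of anti-Latin squares of order $3$. Lemma~\ref{lem:d3-first-row-patterns} applies directly once we normalize each square by a symbol permutation so that its $(0,0)$ entry is $0$. This normalization is legitimate because symbol relabeling preserves both the anti-Latin property and orthogonality. The lemma then gives $|\mathcal F|\le 3$, and since $\mathcal F$ was arbitrary, $N_A(3)\le 3$.

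Combining the two bounds yields $N_A(3)=3=N_L(3)+1$. No real obstacle remains here. The substantive work was done in Lemma~\ref{lem:d3-no-constant-row}, which rules out constant rows, and in the pairing argument of Lemma~\ref{lem:d3-first-row-patterns}. The only point needing care is to state explicitly that the value $N_L(3)=2$ is being used.
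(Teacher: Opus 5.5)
Your proposal is correct and matches the paper's proof: the lower bound $N_A(3)\ge N_L(3)+1=3$ comes from Theorem~\ref{thm:3} with $N_L(3)=2$, and the upper bound $|\mathcal F|\le 3$ comes from the first-row pairing argument of Lemma~\ref{lem:d3-first-row-patterns}. One small correction to your aside: in that construction $B^1$ is used only to define the shift and is then discarded, so the resulting anti-Latin triple is $\sigma(B^2),\sigma(R),\sigma(C)$.
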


\begin{proof}
The lower bound $N_A(3)\ge 3$ was established above.  
On the other hand, Lemma~\ref{lem:d3-first-row-patterns} shows that any pairwise
orthogonal family of anti-Latin squares of order $3$ has size at most $3$.  
Hence $N_A(3)=3$.
\end{proof}

\subsubsection{A normal form for anti-Latin squares of order $3$}
\label{subsubsec:d3-square-side-structure}

We now describe anti-Latin squares of order $3$ by a single normal form.

Throughout this subsection, all row and column indices are taken in
\(
\mathbb Z_3=\{0,1,2\},
\)
and all additions of indices are understood modulo $3$.

\begin{definition}[Symbol-support]\label{def:d3-support}
Let $A$ be a $3\times3$ array on the symbol set $\mathbb Z_3$.
For $x\in\mathbb Z_3$, we define the support of $x$ in $A$ by
\[
\operatorname{supp}_A(x)
:=
\{(i,j)\in\mathbb Z_3^2 \mid A_{i,j}=x\}.
\]
\end{definition}

\begin{definition}\label{def:d3-models-sign}
For
\(
(\varepsilon,\eta)\in\{\pm1\}^2\) and 
$\delta\in\mathbb Z_3$,
we define 
the $3\times3$ array $A^{(\varepsilon,\eta)}_\delta$ 
on $\mathbb Z_3$
by
\[
\operatorname{supp}_{A^{(\varepsilon,\eta)}_\delta}(x)
=
\Bigl\{
(x,\delta+\varepsilon\eta\,x),\,
(x+\varepsilon,\delta+\varepsilon\eta\,x),\,
(x,\delta+\varepsilon\eta\,x+\eta)
\Bigr\}
\qquad (x\in\mathbb Z_3).
\]
\end{definition}

\begin{definition}[Normalized anti-Latin squares of order $3$]
A $3\times 3$ anti-Latin square $A$ on the symbol set $\mathbb Z_3$ is said to
be normalized if, for each $i\in\mathbb Z_3$, the symbol $i$ appears exactly
twice in row $i$.
\end{definition}

Indeed, in any anti-Latin square of order $3$, the repeated symbols in the
three rows are pairwise distinct.  Consequently, there is a unique symbol
relabeling that sends the repeated symbol in row $i$ to $i$ for every
$i\in\mathbb Z_3$, and this relabeling makes the square normalized.

\begin{proposition}[Normal form for $d=3$]\label{prop:d3-normal-form}
Let $A$ be a normalized anti-Latin square of order $3$.
Then there exist unique 
\(
(\varepsilon,\eta)\in\{\pm1\}^2\)
and $\delta\in\mathbb Z_3$
such that
$A=A^{(\varepsilon,\eta)}_\delta$.
Consequently, the natural action of $S_3$ by symbol relabeling on
anti-Latin squares is free, every orbit contains exactly one normalized square,
and the number of anti-Latin squares of order $3$ is
$4\cdot3\cdot3!=72$.
\end{proposition}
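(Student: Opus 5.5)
The plan is to determine the normalized square $A$ completely from a few combinatorial choices and then match those choices with the parameters $(\varepsilon,\eta,\delta)$. By Lemma~\ref{lem:d3-no-constant-row}, every row and every column has shape $(x,x,y)$ with $x\ne y$. Since $A$ is normalized, row $i$ contains symbol $i$ exactly twice and one other symbol once. Because each symbol occurs exactly three times, the counts force the structure. Symbol $x$ occurs twice in row $x$, so it occurs exactly once in the remaining rows. Thus the third entry of each row is a symbol $\neq i$, and the map $i\mapsto(\text{singleton symbol of row }i)$ is a fixed-point-free map of $\mathbb Z_3$ that hits each symbol once. Hence it is a $3$-cycle: the singleton in row $i$ is either $i+1$ for all $i$ or $i-1$ for all $i$. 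Equivalently, $x$ has its single off-row occurrence in row $x+\varepsilon$ for a uniform sign $\varepsilon\in\{\pm1\}$.

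Next we turn to columns. Each symbol $x$ occupies two cells in row $x$ and one cell in row $x+\varepsilon$. Take the column $c_x$ of that lone cell. The column condition says each column has a repeated symbol and no constant column. I would argue that the lone cell of $x$ must lie in one of the two columns where $x$ appears in row $x$. The approach is to count: if the support of $x$ used three distinct columns, then $x$ would appear exactly once in every column. I would then show this is incompatible with every column having a repeat, by considering which symbols could repeat in each column. Each column needs some symbol twice, and there are only three symbols with $9$ cells total; I would track the column-multiplicity vectors (each symbol's support has column-profile $(2,1,0)$ or $(1,1,1)$) and show all three must be $(2,1,0)$. I expect this exclusion to be the main obstacle. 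I would handle it by a short case check: a $(1,1,1)$ symbol forces the other two symbols to split the remaining six cells with a repeat in each column, and combined with the row data this leads to a contradiction.

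Once every support is an ``L-shape'' $\{(x,c),(x+\varepsilon,c),(x,c')\}$, write $c=\delta+\varepsilon\eta x$ and $c'=c+\eta$. We must show that the column $c_x$ is an affine function of $x$ and that the offset $c'-c$ is a uniform $\eta$. Within row $x$ the two $x$-cells are columns $c_x,c'_x$. The third cell of row $x$ is the lone cell of symbol $x-\varepsilon$, at column $c_{x-\varepsilon}$. So $\{c_x,c'_x,c_{x-\varepsilon}\}=\mathbb Z_3$, and the columns $c_x$ must be distinct, since each column contains exactly one repeated pair. This makes $x\mapsto c_x$ a bijection, hence $c_x=\delta+s x$ with $s=\pm1$, and then $c'_x$ is the remaining column, namely $c_x-(c_{x-\varepsilon}-c_x)=c_x+s\varepsilon$. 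Setting $\eta=s\varepsilon$ gives $s=\varepsilon\eta$, which matches Definition~\ref{def:d3-models-sign}.

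For uniqueness, $\varepsilon$ is read from row $0$'s singleton, $\delta=c_0$, and $\eta$ is read from the offset, so the parameters are recovered from $A$. For the consequences, first verify that each $A^{(\varepsilon,\eta)}_\delta$ is indeed a normalized anti-Latin square by direct inspection. The text before the proposition shows each orbit contains exactly one normalized square via a unique relabeling. Uniqueness of that relabeling gives freeness of the $S_3$ action. Counting then gives $4\cdot 3$ normalized squares times $|S_3|=6$, which is $72$.
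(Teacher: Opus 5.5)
Your argument is correct, and it reaches the normal form by a somewhat different route than the paper.

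\textbf{The paper's route.} The paper starts from the columns. Each column has exactly one repeated symbol, and a symbol repeated in two columns would occur at least four times, so every symbol $i$ is the repeated symbol of a unique column $\lambda(i)$. This gives an L-shaped support whose signs $\varepsilon_i,\eta_i$ a priori depend on $i$. The paper then sums the row (resp.\ column) coordinates of all nine cells in $\mathbb Z_3$, obtaining $\varepsilon_0+\varepsilon_1+\varepsilon_2=0$ (resp.\ $\eta_0+\eta_1+\eta_2=0$), which forces both signs to be uniform. Finally, completing row $i$ gives the recurrence $\lambda(i+\varepsilon)-\lambda(i)=\eta$, hence the affine form.

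\textbf{Your route.} You get the uniform $\varepsilon$ from the fact that the row-singleton map is a derangement of $\mathbb Z_3$, which is more transparent than the coordinate-sum trick. You get affineness of $x\mapsto c_x$ from its bijectivity. Only then does the completion of row $x$ force the uniform offset $\eta=s\varepsilon$, so you never need a separate uniformity argument for the $\eta_i$. Both routes are intrinsically order-$3$ arguments. The paper's treats the two signs symmetrically; yours trades the mod-$3$ sums for two small facts about permutations of $\mathbb Z_3$.

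\textbf{Three remarks.}

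First, the step you flag as the main obstacle is exactly the paper's one-line pigeonhole and needs no case check or row data. Each column contains exactly one repeated symbol. A symbol with column profile $(2,1,0)$ is repeated in exactly one column, and a $(1,1,1)$ symbol in none. Three columns therefore force all three profiles to be $(2,1,0)$.

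Second, the inference ``bijection, hence $c_x=\delta+sx$'' needs its one-line justification. The six maps $x\mapsto\delta+sx$ with $s=\pm1$, $\delta\in\mathbb Z_3$ are distinct bijections, and $|S_3|=6$, so every permutation of $\mathbb Z_3$ is of this form.

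Third, your ``direct inspection'' that each $A^{(\varepsilon,\eta)}_\delta$ is a well-defined normalized anti-Latin square can be replaced by your own row-completion computation run in reverse. In row $x$, the supports of $x$ and $x-\varepsilon$ occupy columns $c_x$, $c_x+\eta$ and $c_x-\eta$, which is exactly what the paper does.
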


\begin{proof}
By Lemma~\ref{lem:d3-no-constant-row}, each row and each column contains a
repeated symbol, but no row and no column is constant.

\smallskip
\noindent
\textbf{Step 1: the $L$-shape of each symbol-support.}
Since $A$ is normalized, the repeated symbol in row $i$ is $i$ for each
$i\in\mathbb Z_3$.  The repeated symbols in the three columns are pairwise
distinct: if two columns had the same repeated symbol, that symbol would occur
at least four times, contradicting balancedness.  Hence, for each
$i\in\mathbb Z_3$, there is a unique column whose repeated symbol is $i$;
we denote this column by $\lambda(i)$.
Then symbol $i$ appears twice in row $i$ and twice in column $\lambda(i)$.
Since symbol $i$ appears exactly three times in total, these two double
occurrences must overlap in one corner cell.  Hence there exist
$\varepsilon_i,\eta_i\in\{\pm1\}$ such that
\begin{equation}
\operatorname{supp}_A(i)
=
\{(i,\lambda(i)),\,
  (i+\varepsilon_i,\lambda(i)),\,
  (i,\lambda(i)+\eta_i)\}.
\label{eq:d3-L-shape-support}
\end{equation}

\smallskip
\noindent
\textbf{Step 2: the two signs are independent of \(i\).}
We show that the signs \(\varepsilon_i\) and \(\eta_i\) do not depend on \(i\).
Recall from Step 1 that
\[
\operatorname{supp}_A(i)
=
\{(i,\lambda(i)),\,
  (i+\varepsilon_i,\lambda(i)),\,
  (i,\lambda(i)+\eta_i)\}.
\]

First consider the row coordinates.  Since the three supports
\(\operatorname{supp}_A(i)\), \(i\in\mathbb Z_3\), partition the nine cells of
\(\mathbb Z_3^2\), the sum of the row coordinates over all cells is equal to the
sum of the row coordinates over these three supports.  Hence, in \(\mathbb Z_3\),
\[
0
=
\sum_{(r,c)\in\mathbb Z_3^2} r
=
\sum_{i\in\mathbb Z_3} \bigl(i+(i+\varepsilon_i)+i\bigr)
=
\sum_{i\in\mathbb Z_3} \varepsilon_i .
\]
Here we used \(3i=0\) in \(\mathbb Z_3\).  Since each
\(\varepsilon_i\) is either \(1\) or \(-1\), the equality
$\varepsilon_0+\varepsilon_1+\varepsilon_2=0
$ in $\mathbb Z_3$
is possible only when
$\varepsilon_0=\varepsilon_1=\varepsilon_2$.
We denote this common value by \(\varepsilon\in\{\pm1\}\).

The same argument applied to the column coordinates gives
\[
0
=
\sum_{(r,c)\in\mathbb Z_3^2} c
=
\sum_{i\in\mathbb Z_3}
\bigl(\lambda(i)+\lambda(i)+\lambda(i)+\eta_i\bigr)
=
\sum_{i\in\mathbb Z_3}\eta_i .
\]
Since each \(\eta_i\) is either \(1\) or \(-1\), it follows that
\(
\eta_0=\eta_1=\eta_2.
\)
We denote this common value by \(\eta\in\{\pm1\}\).

\smallskip
\noindent
\textbf{Step 3: the affine form of the corner map.}
After Step 2, the supports have the form
\[
 S_i:=\operatorname{supp}_A(i)
 =\{(i,\lambda(i)),(i+\varepsilon,\lambda(i)),
     (i,\lambda(i)+\eta)\}.
\]
Fix a row $i$.  The support $S_i$ occupies the two cells in columns
$\lambda(i)$ and $\lambda(i)+\eta$ of that row.  The only support whose
vertical leg can contribute a further cell to row $i$ is $S_{i-\varepsilon}$,
and that cell is
\[
 (i,\lambda(i-\varepsilon)).
\]
Because the three supports are disjoint and partition the nine cells, these
three column coordinates must be the three distinct elements of
$\mathbb Z_3$.  The unique element different from
$\lambda(i)$ and $\lambda(i)+\eta$ is $\lambda(i)-\eta$.  Hence
\[
 \lambda(i-\varepsilon)=\lambda(i)-\eta,
\]
or equivalently
\[
 \lambda(i+\varepsilon)-\lambda(i)=\eta.
\]
Since $\varepsilon^{-1}=\varepsilon$ in $\mathbb Z_3$, the function
$i\mapsto\lambda(i)-\varepsilon\eta i$ is constant.  Thus there is a unique
$\delta\in\mathbb Z_3$, namely $\delta=\lambda(0)$, such that
\[
 \lambda(i)=\delta+\varepsilon\eta i
 \qquad(i\in\mathbb Z_3).
\]
Substituting this expression into \eqref{eq:d3-L-shape-support} gives
\[
\operatorname{supp}_A(i)
=
\Bigl\{
(i,\delta+\varepsilon\eta i),
(i+\varepsilon,\delta+\varepsilon\eta i),
(i,\delta+\varepsilon\eta i+\eta)
\Bigr\}.
\]
Therefore $A=A^{(\varepsilon,\eta)}_\delta$.  The signs
$\varepsilon,\eta$ are determined by the orientations of the repeated row and
column occurrences, and $\delta=\lambda(0)$, so all three parameters are
unique.

Conversely, fix $(\varepsilon,\eta)\in\{\pm1\}^2$ and
$\delta\in\mathbb Z_3$, and define the three supports by the displayed
formula.  Put $\lambda(i)=\delta+\varepsilon\eta i$.  In row $i$, the support
of symbol $i$ occupies columns $\lambda(i)$ and $\lambda(i)+\eta$, while the
support of symbol $i-\varepsilon$ contributes the cell in column
\[
 \lambda(i-\varepsilon)=\lambda(i)-\eta.
\]
These are the three distinct columns of $\mathbb Z_3$.  Hence every cell in
every row belongs to exactly one support, so the three supports are pairwise
disjoint and cover $\mathbb Z_3^2$.  They therefore define a well-defined
balanced array.  Symbol $i$ occurs twice in row $i$ and twice in column
$\lambda(i)$, so every row and every column has a repeated symbol.  Thus the
array is a normalized anti-Latin square.

There are $4\cdot3$ normalized squares.  The natural action of $S_3$ by symbol
relabeling is free, and the unique-normalization observation preceding the
proposition shows that every orbit contains exactly one normalized square.
Therefore the total number of anti-Latin squares of order $3$ is
\[
 4\cdot3\cdot3!=72.
\]

\end{proof}
\subsubsection{Orthogonality and classification of families in the case $d=3$}
\label{subsubsec:d3-square-side-orthogonality}

We next determine exactly when two normalized model squares are orthogonal.

\begin{proposition}[Orthogonality criterion in normal form]
\label{prop:d3-orthogonality-criterion}
Let
\[
(\varepsilon,\eta),(\varepsilon',\eta')\in\{\pm1\}^2,
\qquad
\delta,\delta'\in\mathbb Z_3.
\]
Then $A^{(\varepsilon,\eta)}_\delta$ and
$A^{(\varepsilon',\eta')}_{\delta'}$ are orthogonal if and only if
$(\varepsilon,\eta)=(\varepsilon',\eta')$ and $\delta\neq\delta'$.
\end{proposition}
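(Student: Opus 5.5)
The plan is to translate orthogonality into a statement about difference multisets in the group $\mathbb Z_3^2$, using the translation structure visible in Definition~\ref{def:d3-models-sign}. Write
\[
S_{\varepsilon,\eta}:=\{(0,0),(\varepsilon,0),(0,\eta)\}
\qquad\text{and}\qquad
v_x:=(x,\delta+\varepsilon\eta x).
\]
Then
\[
\operatorname{supp}_{A^{(\varepsilon,\eta)}_\delta}(x)=S_{\varepsilon,\eta}+v_x .
\]
Similarly, for the second square, $\operatorname{supp}(y)=S_{\varepsilon',\eta'}+v'_y$ with $v'_y:=(y,\delta'+\varepsilon'\eta' y)$.

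Both squares are balanced, as shown in Proposition~\ref{prop:d3-normal-form}. So they are orthogonal if and only if
\[
|(S+v_x)\cap(S'+v'_y)|=1 \qquad\text{for all } x,y\in\mathbb Z_3 .
\]
This intersection size equals the number of pairs $(s,s')\in S\times S'$ with $s-s'=v'_y-v_x$. Thus orthogonality says that each of the nine vectors $v'_y-v_x$ is represented exactly once in the multiset $S-S'$. Note that $S-S'$ has total multiplicity $9$. I will then split into cases according to whether the parameters $(\varepsilon,\eta)$ and $(\varepsilon',\eta')$ agree.

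\emph{Case $(\varepsilon,\eta)=(\varepsilon',\eta')$.} Here $S-S$ consists of $0$ with multiplicity $3$ and the six nonzero vectors $\pm(\varepsilon,0)$, $\pm(0,\eta)$, $\pm(\varepsilon,-\eta)$, each once. The latter are exactly the six nonzero elements of $\mathbb Z_3^2$. Put $c:=\delta'-\delta$. The differences $v'_y-v_x=(t,c+\varepsilon\eta t)$, with $t=y-x$, run over a coset of the line spanned by $(1,\varepsilon\eta)$, each point three times, once for each $x$. If $c\ne0$ this coset avoids $0$. Hence each required difference is a nonzero vector and is represented exactly once, so orthogonality holds. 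If $c=0$, then the vector $0$ is required, but it is represented three times, so orthogonality fails.

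\emph{Case $(\varepsilon,\eta)\neq(\varepsilon',\eta')$.} I will exhibit a required difference with multiplicity $\ne1$.

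First suppose exactly one sign differs. Then the slopes $\varepsilon\eta$ and $\varepsilon'\eta'$ differ, so $v'_y-v_x$ takes every value of $\mathbb Z_3^2$ as $(x,y)$ varies. Every vector of $\mathbb Z_3^2$ is therefore required, and it suffices to find a doubled element of $S-S'$.
\begin{itemize}
\item If $\varepsilon'=-\varepsilon$, then $(\varepsilon,0)=(\varepsilon,0)-0=0-(-\varepsilon,0)$ occurs twice.
\item If $\eta'=-\eta$, then $(0,\eta)$ occurs twice, by the same computation.
\end{itemize}

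Next suppose both signs differ. Now the slopes coincide, and the required differences form one coset of the direction $(\varepsilon,\eta)$. A direct listing gives the multiset
\[
S-S'=\{0,\ (\varepsilon,0)^{\times2},\ (0,\eta)^{\times2},\ (\varepsilon,\eta)^{\times2},\ (-\varepsilon,0),\ (0,-\eta)\}.
\]
The three cosets of $\langle(\varepsilon,\eta)\rangle$ are those through $0$, through $(\varepsilon,0)$, and through $(0,\eta)$. These are pairwise distinct, because $(-\varepsilon,\eta)$ and $(\varepsilon,0)$ are not multiples of $(\varepsilon,\eta)$. The coset through $0$ contains $(\varepsilon,\eta)$, and the other two contain $(\varepsilon,0)$ and $(0,\eta)$ respectively. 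So every coset contains a doubled vector, and orthogonality fails for every choice of $\delta,\delta'$.

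The main obstacle is the last subcase, where both signs differ. There the required differences lie on a single line, so a doubled difference does not automatically lie on it. The key step is verifying that each of the three parallel cosets of that line picks up a vector of multiplicity two. The rest consists of bookkeeping with the translation description of the supports.
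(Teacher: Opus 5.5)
Your difference-multiset reformulation is valid, and the necessity direction is handled correctly in all three subcases: equal signs with $\delta=\delta'$, exactly one sign flipped, and both signs flipped. The gap is in the sufficiency step of the case $(\varepsilon,\eta)=(\varepsilon',\eta')$.

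You assert that $\pm(\varepsilon,0),\pm(0,\eta),\pm(\varepsilon,-\eta)$ are ``exactly the six nonzero elements of $\mathbb Z_3^2$''. But $\mathbb Z_3^2$ has eight nonzero elements, and the vectors $\pm(\varepsilon,\eta)$ do not occur in $S-S$ at all. So the inference ``each required difference is nonzero, hence represented exactly once'' is false as written. A required difference equal to $(\varepsilon,\eta)$ would have multiplicity $0$ and would destroy orthogonality. What you actually need is that the required differences avoid $\pm(\varepsilon,\eta)$, not merely $0$.

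This is exactly where the structure of the model enters. Since $(\varepsilon,\eta)=\varepsilon(1,\varepsilon\eta)$, the nonzero part of $S-S$ is precisely the complement of the line $\langle(1,\varepsilon\eta)\rangle$. The required differences $(t,c+\varepsilon\eta t)$ fill the coset $(0,c)+\langle(1,\varepsilon\eta)\rangle$ of that same line. For $c\neq0$ this coset is disjoint from the line, so every required difference has multiplicity one. For $c=0$ it is the line itself, with multiplicities $3,0,0$. In words: the corners $v_x$ are translated along the unique direction that the tile $S$ does not determine, and your write-up should state this rather than the incorrect count.

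There is also a small omission in the last subcase. To know that the cosets through $0$ and through $(0,\eta)$ are different, you also need $(0,\eta)\notin\langle(\varepsilon,\eta)\rangle$. Without it, ``every coset'' does not follow.

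With these corrections your proof is complete, and it takes a genuinely different route from the paper. The paper checks sufficiency cell by cell: for $\delta'=\delta\pm\eta$ it identifies which support of the second square contains each of the three cells of $S_i$. It proves necessity by intersecting the corner diagonals $\{(i,\delta+\varepsilon\eta i)\}$, or by listing explicit two-point intersections for each value of $\delta'-\delta$. Your translation-tile picture treats all cases uniformly and explains why the criterion holds. The price, as the miscount shows, is that the uniform bookkeeping can obscure the one coincidence that drives sufficiency: the translation direction equals the missing difference direction.
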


\begin{proof}
Write $S_i$ for the support of symbol $i$ in
$A^{(\varepsilon,\eta)}_\delta$, and $S'_j$ for the support of symbol $j$ in
$A^{(\varepsilon',\eta')}_{\delta'}$.  Orthogonality is equivalent to
\(
|S_i\cap S'_j|=1\) with 
$i,j\in\mathbb Z_3$
because $S_i\cap S'_j$ is exactly the set of cells carrying the ordered pair
$(i,j)$.

Assume first that $(\varepsilon,\eta)=(\varepsilon',\eta')$.  If
$\delta=\delta'$, then the two squares are identical, so
$|S_i\cap S'_i|=3$ for every $i$; hence they are not orthogonal.  

Now suppose that $\delta\neq\delta'$.  Since $\delta'-\delta\in\{\eta,-\eta\}$, we consider the two cases separately.
Fix $i\in\mathbb Z_3$.  The three cells occupied by symbol $i$ in
$A^{(\varepsilon,\eta)}_\delta$ are
\[
p_1=(i,\delta+\varepsilon\eta i),\qquad
p_2=(i+\varepsilon,\delta+\varepsilon\eta i),\qquad
p_3=(i,\delta+\varepsilon\eta i+\eta).
\]

First, let $\delta'=\delta+\eta$.
We determine the entries of $A^{(\varepsilon,\eta)}_{\delta'}$ at
$p_1,p_2,p_3$ by checking their symbol-supports.  Since
\[
p_1\in
\operatorname{supp}_{A^{(\varepsilon,\eta)}_{\delta'}}(i-\varepsilon),
\qquad
p_2\in
\operatorname{supp}_{A^{(\varepsilon,\eta)}_{\delta'}}(i+\varepsilon),
\qquad
p_3\in
\operatorname{supp}_{A^{(\varepsilon,\eta)}_{\delta'}}(i),
\]
the entries of the second square at $p_1,p_2,p_3$ are respectively
$i-\varepsilon$, $i+\varepsilon$, and $i$.

Indeed, for example,
\[
\operatorname{supp}_{A^{(\varepsilon,\eta)}_{\delta'}}(i-\varepsilon)
=
\{(i-\varepsilon,\delta+\varepsilon\eta i),\,
  (i,\delta+\varepsilon\eta i),\,
  (i-\varepsilon,\delta+\varepsilon\eta i+\eta)\},
\]
so $p_1=(i,\delta+\varepsilon\eta i)$ belongs to this support.  The other two
membership assertions are obtained in the same way from the support formula.

When $\delta'=\delta-\eta$, the same support check gives
\[
p_1\in
\operatorname{supp}_{A^{(\varepsilon,\eta)}_{\delta'}}(i),\qquad
p_2\in
\operatorname{supp}_{A^{(\varepsilon,\eta)}_{\delta'}}(i+\varepsilon),\qquad
p_3\in
\operatorname{supp}_{A^{(\varepsilon,\eta)}_{\delta'}}(i-\varepsilon).
\]
Thus the entries of the second square at $p_1,p_2,p_3$ are respectively
$i$, $i+\varepsilon$, and $i-\varepsilon$.

In either case, the three entries of the second square on $S_i$ are the three
distinct elements of $\mathbb Z_3$.  Therefore every symbol $j$ occurs exactly
once on $S_i$, so
\[
 |S_i\cap S'_j|=1
 \qquad(i,j\in\mathbb Z_3).
\]
Hence the two squares are orthogonal.

Conversely, suppose that
$(\varepsilon,\eta)\neq(\varepsilon',\eta')$.  
First assume that exactly one of the two signs differs.
Then
$\varepsilon\eta\neq\varepsilon'\eta'$, so the two sets
\[
C=\{(i,\delta+\varepsilon\eta i)\mid i\in\mathbb Z_3\},
\qquad
C'=\{(i,\delta'+\varepsilon'\eta' i)\mid i\in\mathbb Z_3\}
\]
meet in a unique point, say $(r,c)$.  
Then, by the choice of
$(r,c)$, we have
$c=\delta+\varepsilon\eta r=\delta'+\varepsilon'\eta' r$.
The support formula gives
\[
S_r=\{(r,c),(r+\varepsilon,c),(r,c+\eta)\},
\qquad
S'_r=\{(r,c),(r+\varepsilon',c),(r,c+\eta')\}.
\]
If $\varepsilon=\varepsilon'$ and $\eta\neq\eta'$, 
then $S_r$ and $S'_r$ both contain the two common points $(r,c)$ and
$(r+\varepsilon,c)$.
If $\eta=\eta'$ and
$\varepsilon\neq\varepsilon'$, then $S_r$ and $S'_r$ both contain
$(r,c)$ and $(r,c+\eta)$.  
In either case some intersection
$S_i\cap S'_j$ has at least two points, so the two squares are not orthogonal.

It remains to consider the case in which both signs differ.  Then
$(\varepsilon',\eta')=(-\varepsilon,-\eta)$.  Since
$\delta'-\delta$ is one of $0,\eta,-\eta$, the support formula gives,
respectively,
\[
\begin{array}{ll}
\delta'=\delta:
&
S_i\cap S'_{i+\varepsilon}
\supset
\{(i+\varepsilon,\delta+\varepsilon\eta i),
  (i,\delta+\varepsilon\eta i+\eta)\},\\[1mm]
\delta'=\delta+\eta:
&
S_i\cap S'_i
\supset
\{(i,\delta+\varepsilon\eta i),
  (i,\delta+\varepsilon\eta i+\eta)\},\\[1mm]
\delta'=\delta-\eta:
&
S_i\cap S'_{i+\varepsilon}
\supset
\{(i,\delta+\varepsilon\eta i),
  (i+\varepsilon,\delta+\varepsilon\eta i)\}.
\end{array}
\]
Thus in every subcase some intersection of symbol-supports contains at least
two points.  Orthogonality is impossible.  This proves the criterion.
\end{proof}

\begin{corollary}\label{cor:d3-twelve-orthogonal-squares}
For every anti-Latin square $A$ of order $3$, there exist exactly $12$
anti-Latin squares of order $3$ that are orthogonal to $A$.
\end{corollary}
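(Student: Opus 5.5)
Fix an anti-Latin square $A$ of order $3$. By Proposition~\ref{prop:d3-normal-form} there is a unique symbol permutation $\pi$ such that $\pi\circ A$ is normalized, and $\pi\circ A=A^{(\varepsilon,\eta)}_\delta$ for unique parameters. Symbol relabeling preserves both orthogonality and the anti-Latin property. So a square $B$ is orthogonal to $A$ if and only if $B$ is orthogonal to $A^{(\varepsilon,\eta)}_\delta$. It therefore suffices to count the anti-Latin squares orthogonal to a fixed normalized model square $A^{(\varepsilon,\eta)}_\delta$.

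Next, write an arbitrary anti-Latin square $B$ as $B=\rho\circ B_0$. Here $B_0$ is normalized and $\rho\in S_3$; by Proposition~\ref{prop:d3-normal-form} both are uniquely determined by $B$, since the $S_3$-action is free and each orbit contains exactly one normalized square. Orthogonality is again invariant under relabeling $B$. Hence $B$ is orthogonal to $A^{(\varepsilon,\eta)}_\delta$ if and only if $B_0$ is, and the set of squares orthogonal to $A$ is in bijection with pairs $(B_0,\rho)$, where $B_0$ is a normalized square orthogonal to $A^{(\varepsilon,\eta)}_\delta$ and $\rho\in S_3$ is arbitrary.

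By Proposition~\ref{prop:d3-orthogonality-criterion}, the normalized squares orthogonal to $A^{(\varepsilon,\eta)}_\delta$ are exactly the $A^{(\varepsilon,\eta)}_{\delta'}$ with $\delta'\neq\delta$. There are $2$ of them. The total count is therefore $2\cdot|S_3|=2\cdot 6=12$, independent of $A$.

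The only point needing care is the bijectivity in the second paragraph: distinct pairs $(B_0,\rho)$ must give distinct squares $\rho\circ B_0$. This follows precisely from the freeness of the action together with the uniqueness of the normalized representative, both stated in Proposition~\ref{prop:d3-normal-form}. No further computation is needed.
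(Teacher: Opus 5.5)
Your proof is correct and follows the paper's own argument. You reduce to a normalized model square $A^{(\varepsilon,\eta)}_\delta$, use the orthogonality criterion to get its two normalized mates $A^{(\varepsilon,\eta)}_{\delta'}$ with $\delta'\neq\delta$, and multiply by the $3!$ symbol relabelings. Your explicit justification that $(B_0,\rho)\mapsto\rho\circ B_0$ is a bijection, via freeness and the uniqueness of the normalized representative, only spells out what the paper uses implicitly.
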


\begin{proof}
By Proposition~\ref{prop:d3-normal-form}, after relabeling the symbols we may
write
\(
A=A^{(\varepsilon,\eta)}_\delta
\)
for unique
\(
(\varepsilon,\eta)\in\{\pm1\}^2\) and \(
\delta\in\mathbb Z_3\).
By Proposition~\ref{prop:d3-orthogonality-criterion}, the normalized arrays
orthogonal to $A$ are exactly
\(
A^{(\varepsilon,\eta)}_{\delta'}
\)
for $\delta'\in\mathbb Z_3\setminus\{\delta\}$,
so there are exactly two of them in normalized form.
Each has exactly $3!$ symbol relabelings, and symbol relabeling preserves
orthogonality.
Hence the total number of orthogonal mates of $A$ is
\(
2\cdot 3!=12\).
\end{proof}

\begin{corollary}\label{cor:d3-triples-by-sign}
Fix
\(
(\varepsilon,\eta)\in\{\pm1\}^2.
\)
Then the three squares
\(
A^{(\varepsilon,\eta)}_0,\ 
A^{(\varepsilon,\eta)}_1,\ 
A^{(\varepsilon,\eta)}_2
\)
form a mutually orthogonal triple of anti-Latin squares of order $3$.
Moreover, every mutually orthogonal triple of anti-Latin squares of order $3$
consists of three squares with a common sign pair $(\varepsilon,\eta)$, and
hence, after independent symbol relabelings and a permutation of the three
squares, it is equal to
\(
\{A^{(\varepsilon,\eta)}_0,\,
A^{(\varepsilon,\eta)}_1,\,
A^{(\varepsilon,\eta)}_2\}
\)
for a unique sign pair $(\varepsilon,\eta)$.
\end{corollary}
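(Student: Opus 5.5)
The plan is to derive the corollary directly from Proposition~\ref{prop:d3-normal-form} and Proposition~\ref{prop:d3-orthogonality-criterion}, with no further computation.

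\emph{Existence.} Fix $(\varepsilon,\eta)$. The three indices $0,1,2\in\mathbb Z_3$ are pairwise distinct. The orthogonality criterion says that two squares with the same sign pair and different shift parameters are orthogonal. So $A^{(\varepsilon,\eta)}_0,A^{(\varepsilon,\eta)}_1,A^{(\varepsilon,\eta)}_2$ are pairwise orthogonal. Each of them is a normalized anti-Latin square by the converse part of Proposition~\ref{prop:d3-normal-form}.

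\emph{Every triple has this form.} Take any mutually orthogonal triple $(A,B,C)$ of anti-Latin squares of order $3$. Apply to each square separately the unique symbol relabeling that normalizes it, as described before Proposition~\ref{prop:d3-normal-form}. Symbol relabelings preserve both orthogonality and the anti-Latin property, so the relabeled triple is still mutually orthogonal. By Proposition~\ref{prop:d3-normal-form}, the three normalized squares are $A^{(\varepsilon_1,\eta_1)}_{\delta_1}$, $A^{(\varepsilon_2,\eta_2)}_{\delta_2}$ and $A^{(\varepsilon_3,\eta_3)}_{\delta_3}$ for uniquely determined parameters.

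Now apply Proposition~\ref{prop:d3-orthogonality-criterion} to each of the three pairs. This gives $(\varepsilon_1,\eta_1)=(\varepsilon_2,\eta_2)=(\varepsilon_3,\eta_3)=:(\varepsilon,\eta)$, and it shows that $\delta_1,\delta_2,\delta_3$ are pairwise distinct. Since $|\mathbb Z_3|=3$, the shifts $\{\delta_1,\delta_2,\delta_3\}$ are exactly $\{0,1,2\}$. Hence, after a permutation of the three squares, the relabeled triple is $(A^{(\varepsilon,\eta)}_0,A^{(\varepsilon,\eta)}_1,A^{(\varepsilon,\eta)}_2)$.

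\emph{Uniqueness of the sign pair.} Here one must be careful: a triple could in principle be brought to the normal form for two different sign pairs by using some other relabeling that is not the normalizing one. To rule this out, suppose the triple equals $\{A^{(\varepsilon',\eta')}_0,A^{(\varepsilon',\eta')}_1,A^{(\varepsilon',\eta')}_2\}$ after some independent relabelings and a reordering. Each of these model squares is already normalized. By Proposition~\ref{prop:d3-normal-form}, each relabeling orbit contains exactly one normalized square, and the parameters of that square are unique. Therefore the sign pair attached to each original square is an invariant of its $S_3$-orbit. This forces $(\varepsilon',\eta')=(\varepsilon,\eta)$.

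Nothing in this argument is difficult. The one point that needs stating explicitly is this last orbit-invariance step, which guarantees that the sign pair is well defined regardless of which relabelings were used.
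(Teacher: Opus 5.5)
Your proof is correct and follows the paper's argument. For existence you use the orthogonality criterion. For an arbitrary triple you normalize each square via Proposition~\ref{prop:d3-normal-form}, then apply Proposition~\ref{prop:d3-orthogonality-criterion} pairwise to force a common sign pair and pairwise distinct shifts, which must be exactly $\{0,1,2\}$. Your explicit orbit-invariance argument for uniqueness of the sign pair, and your note that the model squares are anti-Latin by the converse part of the normal-form proof, fill in points the paper's proof leaves implicit.
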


\begin{proof}
The first statement follows immediately from
Proposition~\ref{prop:d3-orthogonality-criterion}.
Conversely, let
\(
\mathcal F=\{A,B,C\}
\)
be a mutually orthogonal triple.
By Proposition~\ref{prop:d3-normal-form}, each of $A,B,C$ is, after symbol
relabeling, of the form $A^{(\varepsilon,\eta)}_\delta$.
By Proposition~\ref{prop:d3-orthogonality-criterion}, orthogonality is possible
only between squares with the same sign pair and with distinct parameters.
Hence all three squares have a common sign pair $(\varepsilon,\eta)$, and since
\(
\mathbb Z_3=\{0,1,2\},
\)
their parameters must be exactly $0,1,2$ in some order.
This proves the claim.
\end{proof}

\begin{theorem}[Classification for $d=3$]\label{thm:d3-classification}
For mutually orthogonal triples of anti-Latin squares of order $3$,
there are exactly four weak-isomorphism classes under
Definition~\ref{def:weak-small-order} and exactly one strong-isomorphism class
under Definition~\ref{def:strong-small-order}, without requiring simultaneous transposition.
More precisely, representatives of the four weak-isomorphism classes may be
taken to be
\[
\{A^{(1,1)}_0,A^{(1,1)}_1,A^{(1,1)}_2\},\qquad
\{A^{(-1,1)}_0,A^{(-1,1)}_1,A^{(-1,1)}_2\},
\]
\[
\{A^{(1,-1)}_0,A^{(1,-1)}_1,A^{(1,-1)}_2\},\qquad
\{A^{(-1,-1)}_0,A^{(-1,-1)}_1,A^{(-1,-1)}_2\}.
\]
\end{theorem}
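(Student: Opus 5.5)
By Corollary~\ref{cor:d3-triples-by-sign}, every mutually orthogonal triple is, after independent symbol relabelings and a permutation of the three squares, equal to the model triple $\mathcal T_{(\varepsilon,\eta)}:=\{A^{(\varepsilon,\eta)}_0,A^{(\varepsilon,\eta)}_1,A^{(\varepsilon,\eta)}_2\}$ for a unique sign pair $(\varepsilon,\eta)$. The plan is to read off the classification from this normal form. The weak operations are exactly square permutations and independent symbol relabelings. Hence each triple is weakly isomorphic to some $\mathcal T_{(\varepsilon,\eta)}$, and there are at most four weak classes.

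\textbf{Weak classes.} To show the four model triples are pairwise weakly non-isomorphic, I would use an invariant preserved by weak operations. The sign pair of a single square is such an invariant. By Proposition~\ref{prop:d3-normal-form}, every anti-Latin square of order $3$ has a unique normalized relabeling, and that normalization determines $(\varepsilon,\eta)$ uniquely. Symbol relabeling therefore does not change the sign pair, and neither does permuting the squares within the triple. So weak isomorphism preserves the common sign pair of a triple, and the four representatives are distinct. This gives exactly four weak classes.

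\textbf{Strong classes.} I would show that suitable common row and column permutations carry $\mathcal T_{(\varepsilon,\eta)}$ to $\mathcal T_{(\varepsilon',\eta')}$. The natural choices are the reflections $\rho(i)=-i$ on rows and $\kappa(j)=-j$ on columns.

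Consider first the row reflection applied to the support formula. A cell $(i,j)$ goes to $(-i,j)$. The support of symbol $x$ in $A^{(\varepsilon,\eta)}_\delta$ becomes
\[
\{(-x,\delta+\varepsilon\eta x),\,(-x-\varepsilon,\delta+\varepsilon\eta x),\,(-x,\delta+\varepsilon\eta x+\eta)\}.
\]
Setting $y=-x$, this reads
\[
\{(y,\delta+(-\varepsilon)\eta y),\,(y+(-\varepsilon),\delta+(-\varepsilon)\eta y),\,(y,\delta+(-\varepsilon)\eta y+\eta)\},
\]
which is the support of $y$ in $A^{(-\varepsilon,\eta)}_\delta$. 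So the square becomes $A^{(-\varepsilon,\eta)}_\delta$ after the symbol relabeling $x\mapsto -x$.

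The column reflection $(i,j)\mapsto(i,-j)$ works the same way. It sends the support of $x$ to $\{(x,-\delta-\varepsilon\eta x),(x+\varepsilon,-\delta-\varepsilon\eta x),(x,-\delta-\varepsilon\eta x-\eta)\}$, which is the support of $x$ in $A^{(\varepsilon,-\eta)}_{-\delta}$. Since $\delta\mapsto-\delta$ permutes $\{0,1,2\}$, the whole triple maps onto $\mathcal T_{(\varepsilon,-\eta)}$ up to reordering.

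Composing the two reflections connects all four sign pairs, so there is a single strong class.

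The only point needing care is the exact bookkeeping of how $\delta$ transforms and whether a symbol relabeling is needed; I would check this directly on the support formula as above. The remaining point is the invariance of the sign pair under the weak operations, and the uniqueness statement in Proposition~\ref{prop:d3-normal-form} settles it.
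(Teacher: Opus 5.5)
Your proposal is correct and follows the paper's proof. You reduce to the model triples via Corollary~\ref{cor:d3-triples-by-sign}, separate the four weak classes by the sign pair (which is invariant under symbol relabeling and square permutation), and merge them into one strong class by common row and column reflections; your explicit support computations (the row reflection $i\mapsto -i$ gives $A^{(-\varepsilon,\eta)}_\delta$ after the relabeling $x\mapsto -x$, and the column reflection gives $A^{(\varepsilon,-\eta)}_{-\delta}$) supply the bookkeeping that the paper states in a single line.
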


\begin{proof}
By Corollary~\ref{cor:d3-exact-value}, every maximal pairwise orthogonal family
has size $3$.
By Corollary~\ref{cor:d3-triples-by-sign}, every such triple is weakly
isomorphic to
\(
\{A^{(\varepsilon,\eta)}_0,A^{(\varepsilon,\eta)}_1,A^{(\varepsilon,\eta)}_2\}
\)
for a unique sign pair $(\varepsilon,\eta)$.
Since weak isomorphism fixes the external grid, the sign pair is preserved.
Hence there are exactly four weak-isomorphism classes.

For strong isomorphism, we may additionally apply common row and column
permutations.
A common row reversal changes $\varepsilon$ to $-\varepsilon$, while a common
column reversal changes $\eta$ to $-\eta$.
Hence any sign pair can be transformed into any other sign pair.
Therefore the four weak-isomorphism classes merge into a single
strong-isomorphism class.
\end{proof}

\subsection{The case $d=4$: extremal value and the geometry of saturated anti-Latin families}
\label{subsec:d4}
We now treat the case $d=4$.  We first determine the extremal value
$N_A(4)$ by an explicit saturated family.  Since the extremal value is
$d+1=5$, Section~\ref{sec:geom} then identifies the five symbol partitions of
an extremal family with the five parallel classes of an affine plane of order
$4$.  Our purpose here is to go beyond that general correspondence and
describe what the anti-Latin condition forces specifically in order $4$.

More precisely, we analyze the four-point sets corresponding to the rows and
columns of the original arrays, determine the symbol-multiplicity patterns
that they induce simultaneously in the five squares, and describe how the
exceptional double repetitions can be distributed among the four rows and,
separately, among the four columns.  The resulting discussion gives a complete affine classification of one
row-block or column-block and restricts the possible row and column
double-repetition profiles.  We distinguish throughout between the general
saturated correspondence inherited from Section~\ref{sec:geom} and the
additional conclusions that rely specifically on $d=4$.  We do not give an
isomorphism classification of all saturated families.

\subsubsection{The exact value $N_A(4)=5$}
\label{subsubsec:d4-exact}

We next record an explicit saturated family.  Its verification is finite and
direct: each displayed array is balanced, every row and every column contains
a repeated symbol, and the superposition of any two arrays contains each
ordered pair exactly once.

\begin{example}[A saturated family of order $4$]
\label{ex:d4-orbit10}
The following five arrays form a mutually orthogonal family of anti-Latin
squares of order $4$:
\begin{align*}
A^{(1)}&=\left(
\begin{array}{cccc}
0&0&1&1\\
0&0&1&1\\
2&2&3&3\\
2&3&2&3
\end{array}
\right),
&A^{(2)}&=\left(
\begin{array}{cccc}
0&1&0&2\\
2&3&1&3\\
0&2&0&1\\
3&2&1&3
\end{array}
\right),
&A^{(3)}&=\left(
\begin{array}{cccc}
0&1&1&2\\
3&2&0&3\\
3&0&2&3\\
1&1&2&0
\end{array}
\right),\\
A^{(4)}&=\left(
\begin{array}{cccc}
0&1&2&0\\
2&3&3&1\\
3&1&1&0\\
0&3&2&2
\end{array}
\right),
&A^{(5)}&=\left(
\begin{array}{cccc}
0&1&2&1\\
3&2&3&0\\
1&2&3&2\\
3&0&0&1
\end{array}
\right).
\end{align*}
\end{example}

Example~\ref{ex:d4-orbit10} gives $N_A(4)\ge5$.  Since $N_L(4)=3$,
Theorem~\ref{thm:NA-upper-via-MOLS} gives
\[
 N_A(4)\le N_L(4)+2=5.
\]

\begin{corollary}
\label{cor:d4-exact}
We have $N_A(4)=5$.
\end{corollary}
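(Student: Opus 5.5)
The plan is to obtain the two inequalities separately and combine them, with no new combinatorics required beyond checking the explicit example.

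For the lower bound $N_A(4)\ge 5$, I would verify Example~\ref{ex:d4-orbit10} directly. This verification has three parts.
\begin{enumerate}
\item Check that each of the five arrays is balanced, i.e.\ each symbol of $[4]$ occurs exactly four times. This is a count of sixteen entries per array.
\item Check that every row and every column of each array contains a repeated symbol. This is twenty rows and twenty columns in total.
\item Check that each of the $\binom52=10$ superpositions contains all sixteen ordered pairs exactly once.
\end{enumerate}
To reduce the orthogonality work, I would use the net language of Lemma~\ref{lem:balanced-orthogonal-net}. It suffices to check that every symbol class of $A^{(t)}$ meets every symbol class of $A^{(s)}$ in exactly one cell. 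Equivalently, on each symbol class of $A^{(1)}$ (four cells), every other array takes four distinct values, and similarly for the remaining pairs.

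For the upper bound $N_A(4)\le 5$, I would invoke Theorem~\ref{thm:NA-upper-via-MOLS} together with the classical value $N_L(4)=3$. That value holds because $4$ is a prime power, so $N_L(4)=d-1$ by the result cited in the introduction. This gives $N_A(4)\le N_L(4)+2=5$. Alternatively, one can use $N_B(d)\le d+1$ directly, since a $(4,K)$-net has $K\le 5$.

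Combining the two bounds yields $N_A(4)=5$. The only laborious step is the finite orthogonality check of the ten pairs in the example. It is routine, and the main risk is a transcription error in the displayed arrays rather than any conceptual obstacle. If I wanted a more structural confirmation, I could instead exhibit the affine plane $AG(2,4)$ together with an anti-coordinate grid decomposition and appeal to Theorem~\ref{TH4}. However, the direct check is simpler and suffices.
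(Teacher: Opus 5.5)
Your proposal is correct and matches the paper's argument: the lower bound $N_A(4)\ge 5$ comes from checking the explicit saturated family in Example~\ref{ex:d4-orbit10}, and the upper bound comes from Theorem~\ref{thm:NA-upper-via-MOLS} with $N_L(4)=3$. The transcription risk you flagged does not occur: each of the five arrays is balanced, every row and column contains a repeated symbol, and all ten pairs pass the symbol-class intersection test you describe.
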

Since $5=d+1$, every extremal family in order $4$ is saturated.
Section~\ref{sec:geom} shows that the five symbol partitions of such a family
form the five parallel classes of an affine plane of order $4$.  The
order-$4$ question addressed below is therefore more specific than the
general geometric reformulation: we determine what the anti-Latin condition
forces on each of the four rows and four columns when they are viewed as
four-point subsets of the associated affine plane.

\subsubsection{Specialization of the general finite-field model}
\label{subsubsec:d4-geometric-recap}
We now specialize the finite-field model of
Subsection~\ref{subsec:finite-field-directions}.  Fix an identification of
the affine plane associated with the saturated family with the standard
affine plane $AG(2,4)$ on $\mathbb F_4^2$, where
\[
 \mathbb F_4=\{0,1,\omega,\omega+1\},\qquad \omega^2+\omega+1=0.
\]
Since the characteristic is two,
\[
 \omega^2=\omega+1,\qquad \omega(\omega+1)=1,
\]
so $\omega^{-1}=\omega+1$ and $(\omega+1)^{-1}=\omega$.
The five parallel classes are indexed by
\[
 0,\quad1,\quad\omega,\quad\omega+1,\quad\infty.
\]
For $u\in\mathbb F_4$, the four lines of direction $u$ are
\[
 L_{u,b}:=\{(x,y):y-ux=b\},\qquad b\in\mathbb F_4,
\]
and the four vertical lines are
\[
 L_{\infty,a}:=\{(x,y):x=a\},\qquad a\in\mathbb F_4.
\]

Under the identification of the common $4\times4$ cell grid with
$\mathbb F_4^2$, each row becomes a four-point subset, called a row-block,
and each column becomes a four-point subset, called a column-block.  By
Proposition~\ref{prop:finite-field-saturated-anti-latin}, which is the
finite-field form of the general saturated correspondence developed in
Section~\ref{sec:geom}, every row-block and every column-block determines all
five directions of $AG(2,4)$.

This conclusion is inherited from the general theory rather than being
specific to order $4$.  The additional order-$4$ structure begins with the
numerical fact that a row-block or column-block contains four points and
hence six unordered pairs, whereas $AG(2,4)$ has five directions.  As shown
below, this forces one direction to occur twice and leads, on the anti-Latin
side, to a unique square with symbol-multiplicity pattern $(2,2)$.

For distinct points $p=(x_1,y_1)$ and $q=(x_2,y_2)$, define
\[
\operatorname{dir}(p,q):=
\begin{cases}
\infty,&x_1=x_2,\\[1mm]
\dfrac{y_2-y_1}{x_2-x_1},&x_1\ne x_2.
\end{cases}
\]
Thus, in this specialization, a four-point set is direction-complete
precisely when its six unordered pairs of distinct points collectively
determine all five directions of $AG(2,4)$.

\subsubsection{Additional local structure in order $4$}
\label{subsubsec:d4-blocks}
Let $X\subset\mathbb F_4^2$ be a direction-complete set of four points.  Its
six unordered pairs of distinct points determine directions in
$\mathbb F_4\cup\{\infty\}$.  We say that three points are \emph{collinear}
if they lie on a common affine line.

\begin{lemma}[Direction multiplicities of a row or column block]
\label{lem:d4-direction-multiplicities}
Let $X$ be a direction-complete four-point subset of $AG(2,4)$.  Among
the six unordered pairs of distinct points of $X$, one direction is
determined by exactly two pairs, and each of the other four directions is
determined by exactly one pair.

Moreover, no affine line contains three points of $X$.  The two unordered
pairs of points that determine the direction occurring twice have no point
in common.
\end{lemma}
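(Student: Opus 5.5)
Every unordered pair of distinct points of $X$ lies on a unique line, hence determines exactly one of the five directions. By the definition of direction-completeness (in the finite-field form of Proposition~\ref{prop:finite-field-saturated-anti-latin}), each of the five directions $u\in\mathbb F_4\cup\{\infty\}$ is determined by at least one of the $\binom42=6$ pairs. I will set up the counting identity
\[
\sum_{u} m_u = 6,\qquad m_u\ge 1\ \ (u\in\mathbb F_4\cup\{\infty\}),
\]
where $m_u$ is the number of pairs with direction $u$. With five summands each at least $1$ and total $6$, exactly one $m_u$ equals $2$ and the rest equal $1$. This gives the first assertion.

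For the collinearity claim, I will argue by contradiction. Suppose three points of $X$ lie on one line of direction $u$. Those three points give three pairs, all of direction $u$, so $m_u\ge 3$. The remaining four directions must then be covered by the at most $6-3=3$ other pairs, which is impossible.

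For the disjointness claim, suppose the two pairs with the repeated direction $u$ share a point, say $\{p,q\}$ and $\{p,r\}$ with $q\ne r$. Both lines $pq$ and $pr$ pass through $p$ and have direction $u$. Since there is a unique line through $p$ in each parallel class, they coincide, so $p,q,r$ are collinear. This contradicts the collinearity claim just proved. Hence the two pairs are disjoint; since $|X|=4$, they in fact partition $X$.

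No step is a serious obstacle; the only care needed is to use the affine-plane fact that a point lies on exactly one line of each parallel class, which holds in $AG(2,4)$ (and in any affine plane).
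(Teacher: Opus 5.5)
Your proof is correct and follows the paper's argument: the six pairs distributed over five directions force the pattern $2,1,1,1,1$, three collinear points would give some direction three pairs, and the disjointness claim reduces to non-collinearity. You also spell out the step the paper compresses into ``for the same reason'': two pairs of the same direction sharing a point lie on the unique line of that parallel class through the shared point.
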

\begin{proof}
A four-point set has exactly six unordered pairs.  Since $X$ determines all
five directions, every direction occurs at least once among these six pairs.
Hence one direction occurs twice and each of the remaining four directions
occurs once.  Conversely, this multiplicity pattern clearly determines all
five directions.  If an affine line contained three points of $X$, the three
pairs formed by those points would have the same direction, which is
impossible.  For the same reason, the two pairs determining the direction
that occurs twice cannot have a common point.
\end{proof}
\begin{remark}[Arc-theoretic interpretation]
A direction-complete four-point block in $AG(2,4)$ has no three collinear
points and is therefore a $4$-arc in the standard terminology of finite
geometry \cite{Dembowski,Hirschfeld}.  In the projective completion
$PG(2,4)$, the five affine directions are represented by the five points of
the line at infinity.  The six secants joining pairs of points of the block
meet the line at infinity in all five of these points.  Exactly one point at
infinity is reached by two secants; this point represents the doubled
direction.

Thus Proposition~\ref{prop:d4-single-affine-orbit} is not a classification of
arbitrary $4$-arcs.  It classifies those affine $4$-arcs whose secants cover
the entire line at infinity.
\end{remark}

The unique direction determined by two of the six unordered pairs is called
the \emph{doubled direction} of $X$ and is denoted by $\delta(X)$.  This term
should not be confused with the notion of a special direction used in the
finite-geometric literature \cite{KissSomlaiSpecialDirections}.  Here all
five directions are determined; the doubled direction records the unique
direction represented by two of the six unordered pairs.  Among the
four parallel lines of direction $\delta(X)$, exactly two meet $X$, and each
of these two lines contains exactly two points of $X$.  For every direction
$u\ne\delta(X)$, one line of direction $u$ contains two points of $X$, two
further lines contain one point each, and the remaining line contains no
point of $X$.

For example, consider
\[
 X_*:=\{(0,0),(1,0),(0,1),(\omega,1)\}.
\]
Its six pair directions are
\[
\begin{array}{c|c}
\text{pair}&\text{direction}\\
\hline
(0,0),(1,0)&0\\
(0,0),(0,1)&\infty\\
(0,0),(\omega,1)&\omega+1\\
(1,0),(0,1)&1\\
(1,0),(\omega,1)&\omega\\
(0,1),(\omega,1)&0.
\end{array}
\]
Thus $X_*$ determines all five directions, and its doubled direction is $0$.

\begin{proposition}[Symbol multiplicities in a row or column]
\label{prop:d4-row-column-multiplicity}
Let
\[
 \mathcal F=\{A^{(u)}:u\in\mathbb F_4\cup\{\infty\}\}
\]
be a saturated family of five mutually orthogonal anti-Latin squares of
order $4$, where the squares are indexed by the five directions of the
associated affine plane.  Let $X$ be the row-block corresponding to one row,
or the column-block corresponding to one column.  Then there is a unique
direction $\delta(X)$ such that the four entries of
$A^{(\delta(X))}$ in that row or column have multiplicity pattern $(2,2)$.
For every $u\ne\delta(X)$, the corresponding four entries of $A^{(u)}$ have
multiplicity pattern $(2,1,1)$.
\end{proposition}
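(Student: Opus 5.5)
The plan is to translate the statement into the geometric language of Section~\ref{sec:geom} and then read it off from Lemma~\ref{lem:d4-direction-multiplicities}. By Theorem~\ref{TH4} and Proposition~\ref{prop:finite-field-saturated-anti-latin}, fix the identification $\tau$ of the cell grid with $\mathbb F_4^2$. Under it, the symbol classes of $A^{(u)}$ are exactly the four lines of direction $u$, with labels chosen by a bijection, and the row or column in question becomes a four-point set $X$. For each $u$, the multiset of the four entries of $A^{(u)}$ in that row or column corresponds exactly to the multiset of line-incidences $\{|X\cap L| : L\in\Gamma_u\}$. The symbol labels are irrelevant for multiplicity patterns. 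Moreover, the number of unordered pairs of points of $X$ with direction $u$ equals $\sum_{L\in\Gamma_u}\binom{|X\cap L|}{2}$, because two distinct points share a line of $\Gamma_u$ if and only if their direction is $u$.

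Since the family consists of anti-Latin squares, $X$ is direction-complete by Proposition~\ref{prop:finite-field-saturated-anti-latin}. Hence Lemma~\ref{lem:d4-direction-multiplicities} applies, giving the following facts.
\begin{enumerate}
\item There is a unique direction $\delta(X)$ carried by exactly two pairs, and every other direction is carried by exactly one pair.
\item No line contains three points of $X$, so every $|X\cap L|\le 2$.
\end{enumerate}

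The main step is a small enumeration. The incidence numbers $n_L=|X\cap L|$ over the four lines $L\in\Gamma_u$ are nonnegative integers with sum $4$ and each at most $2$. The partitions of $4$ into parts of size at most $2$ are $(2,2)$, $(2,1,1)$ and $(1,1,1,1)$. Their pair counts $\sum\binom{n_L}{2}$ are $2$, $1$ and $0$ respectively. Matching these with the pair counts from Lemma~\ref{lem:d4-direction-multiplicities}, the direction $\delta(X)$, which has $2$ pairs, must have pattern $(2,2)$, and every $u\ne\delta(X)$, which has $1$ pair, must have pattern $(2,1,1)$.

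Uniqueness of $\delta(X)$ as the direction with pattern $(2,2)$ follows because pattern $(2,2)$ forces two pairs, and only one direction carries two pairs. Translating back through the line labels gives the stated symbol-multiplicity patterns in the row or column of $A^{(u)}$.

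I expect no serious obstacle. The only point needing care is justifying that the pair count equals $\sum\binom{n_L}{2}$, and that the multiplicity pattern of entries is exactly the partition $(n_L)$ with zeros dropped; both are immediate from the fact that the fibres of $\pi_u$ partition the plane.
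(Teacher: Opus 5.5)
Your proposal is correct and follows essentially the same route as the paper, which reads off the line-intersection sizes $(2,2,0,0)$ for $\delta(X)$ and $(2,1,1,0)$ for every other direction from Lemma~\ref{lem:d4-direction-multiplicities} and then converts them into symbol multiplicities through the line labels. Your pair-count identity $\sum_{L\in\Gamma_u}\binom{|X\cap L|}{2}$ and the enumeration of partitions of $4$ make explicit the step that the paper asserts without detailed justification in the paragraph after that lemma.
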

\begin{proof}
For direction $\delta(X)$, exactly two lines meet $X$, with two points on
each line.  After these lines are labeled by symbols, two distinct symbols
therefore occur twice each, giving pattern $(2,2)$.  For every other
direction, the line-intersection sizes are $2,1,1,0$, so one symbol occurs
twice and two further symbols occur once each, giving pattern $(2,1,1)$.
\end{proof}
Here pattern $(2,2)$ means that two distinct symbols occur twice each,
whereas pattern $(2,1,1)$ means that one symbol occurs twice and two further
symbols occur once each.  The existence of a repeated symbol in every square
is inherited from the general saturated correspondence in
Section~\ref{sec:geom}.  The uniqueness statement is specific to order $4$:
for each row and each column, exactly one of the five squares has pattern
$(2,2)$, while each of the other four squares has pattern $(2,1,1)$.  This
additional order-$4$ property permits the following anti-Latin invariants.
\begin{definition}[Row and column double-repetition profiles]
\label{def:d4-double-repetition-profiles}
Let
\[
 \mathcal F=\{A^{(u)}:u\in\mathbb F_4\cup\{\infty\}\}
\]
be a saturated family of five mutually orthogonal anti-Latin squares of
order $4$, indexed by the five directions of the associated affine plane.
For each row $i$, let $u_{\rm row}(i)$ be the unique index $u$ for which row
$i$ of $A^{(u)}$ has symbol-multiplicity pattern $(2,2)$.  The multiplicity
pattern of
\[
 u_{\rm row}(0),\quad u_{\rm row}(1),\quad
 u_{\rm row}(2),\quad u_{\rm row}(3)
\]
is called the \emph{row double-repetition profile} of $\mathcal F$.

For each column $j$, define $u_{\rm col}(j)$ analogously as the unique index
$u$ for which column $j$ of $A^{(u)}$ has pattern $(2,2)$.  The multiplicity
pattern of
\[
 u_{\rm col}(0),\quad u_{\rm col}(1),\quad
 u_{\rm col}(2),\quad u_{\rm col}(3)
\]
is called the \emph{column double-repetition profile} of $\mathcal F$.
\end{definition}
By Proposition~\ref{prop:d4-row-column-multiplicity}, $u_{\rm row}(i)$ is
exactly the doubled direction $\delta(R_i)$ of the corresponding row-block,
and $u_{\rm col}(j)$ is exactly the doubled direction $\delta(C_j)$ of the
corresponding column-block.  Hence the two anti-Latin profiles can be computed
geometrically from
\[
 \delta(R_0),\ldots,\delta(R_3)
 \quad\text{and}\quad
 \delta(C_0),\ldots,\delta(C_3),
\]
respectively.  These profiles are coarse invariants of the row and column
repetition structure under permutations of the five squares, independent
symbol relabelings, and common row or column permutations.  They do not
determine the saturated family or its compatible pair up to isomorphism.

\begin{proposition}[Affine normal form of a row or column block]
\label{prop:d4-single-affine-orbit}
Every direction-complete four-point subset of $\mathbb F_4^2$ can be
carried by an affine change of coordinates onto
\[
 X_*:=\{(0,0),(1,0),(0,1),(\omega,1)\}.
\]
\end{proposition}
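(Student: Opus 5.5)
Our plan is to use the structure from Lemma~\ref{lem:d4-direction-multiplicities}. A direction-complete block $X$ has a doubled direction $\delta(X)$, realized by two disjoint pairs $\{p_1,p_2\}$ and $\{p_3,p_4\}$ on two distinct parallel lines, and no three of its points are collinear. The affine group $AGL(2,4)$ acts transitively on the directions, that is, on the points at infinity. So we first apply an affine map sending $\delta(X)$ to the horizontal direction $0$. Horizontal lines are then preserved by any map of the form $(x,y)\mapsto(ax+by+e,\,cy+f)$ with $ac\ne0$, and we will use only such maps from here on.

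\textbf{Step 1 (normalize the doubled pairs).} Translate so that $p_1=(0,0)$. Scale $x$ so that $p_2=(1,0)$. Apply the map $(x,y)\mapsto(x,cy)$ so that the second horizontal line becomes $y=1$. Then apply a shear $(x,y)\mapsto(x+by,y)$ so that $p_3=(0,1)$. At this point $X=\{(0,0),(1,0),(0,1),(t,1)\}$ with $t\neq0$, since $p_4\ne p_3$. The maps used so far fix $(0,0)$, $(1,0)$ and the line $y=1$, so they do not disturb each other.

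\textbf{Step 2 (use direction-completeness to pin down $t$).} We compute the six directions:
\begin{itemize}
\item $0$ (twice);
\item $\infty$ from $(0,0),(0,1)$;
\item $1/t$ from $(0,0),(t,1)$;
\item $1/(0-1)=1$ from $(1,0),(0,1)$, using characteristic $2$;
\item $1/(t-1)=1/(t+1)$ from $(1,0),(t,1)$.
\end{itemize}
Completeness requires $\{1/t,\,1/(t+1)\}=\{\omega,\omega+1\}$, which excludes $t=1$. Hence $t\in\{\omega,\omega+1\}$, and in both cases the condition holds.

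\textbf{Step 3 (handle $t=\omega+1$).} If $t=\omega+1$, we remain in the stabilizer of the pair structure. The simplest option is the Frobenius semilinear map, but that is not affine, so instead we swap the roles of the two points on $y=1$. Apply the shear $(x,y)\mapsto(x+ty,y)$: it fixes the $x$-axis and sends $(0,1)\mapsto(t,1)$ and $(t,1)\mapsto(0,1)$ in characteristic $2$. The set becomes $\{(0,0),(1,0),(t,1),(0,1)\}$, which is the same set. That does not help. Alternatively, we swap the roles of the two lines. Translate by $(0,1)$ so that the $y=1$ pair becomes the base pair $\{(0,0),(t,0)\}$, then scale $x$ by $t^{-1}$ and repeat Step 1. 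The new parameter is $t'$ where the other pair becomes $\{(0,1),(t^{-1},1)\}$ after the shear; in characteristic $2$, $t^{-1}$ adjusted by the shear gives $t'=t^{-1}=\omega$.

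The main obstacle is exactly this residual case $t=\omega+1$. If the line-swap computation does not produce $\omega$, we will fall back on a direct check: the map $(x,y)\mapsto(\omega x+\omega y,\;y)$ (or a similar explicit element of $AGL(2,4)$) sends $\{(0,0),(1,0),(0,1),(\omega+1,1)\}$ onto $X_*$, verifying it pointwise. Such a map must exist because both sets are $4$-arcs with the same secant pattern, and $AGL(2,4)$ has only finitely many candidates to test.
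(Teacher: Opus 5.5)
Your argument is correct and is essentially the paper's proof. After normalizing the two doubled-direction pairs to $\{(0,0),(1,0)\}$ and $\{(0,1),(t,1)\}$ and forcing $t\in\{\omega,\omega+1\}$, your line swap (translate by $(0,1)$, then scale $x$ by $t^{-1}=\omega$) is exactly the paper's map $(x,y)\mapsto(\omega x,y+1)$, which carries $\{(0,0),(1,0),(0,1),(\omega+1,1)\}$ onto $X_*$.

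You should delete the hedged fallback. It is unnecessary, and it is wrong as written: $(x,y)\mapsto(\omega x+\omega y,y)$ sends $(1,0)$ to $(\omega,0)\notin X_*$. Its justification (``both sets are $4$-arcs with the same secant pattern'') also assumes what you are proving.
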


\begin{proof}
Let $X$ be direction-complete.  By
Lemma~\ref{lem:d4-direction-multiplicities}, exactly two unordered pairs of
points determine $\delta(X)$, and these pairs have no point in common.
Translate one endpoint of the first pair to $(0,0)$.  The common direction
determined by these two pairs gives a nonzero difference vector, and the
displacement between the two parallel lines containing them is linearly
independent of that vector.
Choose an invertible linear map sending these two vectors to $(1,0)$ and
$(0,1)$, respectively.  Under the resulting affine transformation, the first
pair becomes $\{(0,0),(1,0)\}$, while the second pair becomes
$\{(0,1),(t,1)\}$ for some $t\in\mathbb F_4$.
Thus $X$ is affinely equivalent to
\[
 X_t:=\{(0,0),(1,0),(0,1),(t,1)\}.
\]
In this normalization, $t=0$ makes the last two displayed points coincide, and
$t=1$ gives a parallelogram, for which two directions occur twice and hence
only four directions are determined.  Therefore
$t\in\{\omega,\omega+1\}$.  For either of these two values, the four
non-horizontal pairs determine the four remaining directions; equivalently,
using $\omega^2=\omega+1$, their slopes together with the vertical direction
are all distinct.  Finally, the affine map
\[
(x,y)\longmapsto (\omega x,y+1)
\]
sends $X_{\omega+1}$ onto $X_\omega=X_*$. Hence both remaining possibilities are affinely equivalent to $X_*$, and therefore there is a single affine orbit.
\end{proof}

Proposition~\ref{prop:d4-single-affine-orbit} completely classifies the
individual blocks occurring in an anti-coordinate grid decomposition.  The
next question is how four such blocks can partition the sixteen-point plane.

\subsubsection{Geometric restrictions on the anti-Latin profiles}
\label{subsubsec:d4-partitions}
For a saturated anti-Latin family, the four row-blocks form a partition
\[
 \mathcal R=\{R_0,R_1,R_2,R_3\}
\]
of the sixteen points, and the four column-blocks form another partition
\[
 \mathcal C=\{C_0,C_1,C_2,C_3\}.
\]
Definition~\ref{def:d4-double-repetition-profiles} assigns a row and a column
double-repetition profile directly to the anti-Latin family.  We now use the
associated geometry to restrict these profiles.

For this purpose, call a partition
\[
 \mathcal P=\{P_0,P_1,P_2,P_3\}
\]
of $\mathbb F_4^2$ \emph{direction-complete} if every block $P_i$ is a
direction-complete four-point set.  For such a partition, we consider the
multiplicity pattern of its four doubled directions
\[
 \delta(P_0),\quad\delta(P_1),\quad
 \delta(P_2),\quad\delta(P_3).
\]
The five formal possibilities are
\[
\begin{array}{c|c}
\text{doubled directions of the four blocks}&\text{multiplicity pattern}\\
\hline
u,u,u,u &(4)\\
u,u,u,v &(3,1)\\
u,u,v,v &(2,2)\\
u,u,v,w &(2,1,1)\\
u,v,w,z &(1,1,1,1),
\end{array}
\]
where different letters denote different directions.  When $\mathcal P$ is
$\mathcal R$ or $\mathcal C$, this geometric multiplicity pattern is exactly
the row or column double-repetition profile of the anti-Latin family.

Classical direction results concern the direction set of one point set.  At
the present stage, however, the direction set of each block is already fixed
to be the full set of five directions.  The relevant new information is how
the unique excess secant direction varies across the four blocks of a
partition.  This gives a partition-level invariant that is not visible from
the number of directions determined by any individual block.

Fix a direction $u$, and write the four parallel lines of direction $u$ as
\[
 \ell_0^u,\quad\ell_1^u,\quad\ell_2^u,\quad\ell_3^u.
\]
For a partition $\mathcal P=\{P_0,P_1,P_2,P_3\}$, define
\[
 M^u(\mathcal P)
 :=\bigl(|P_i\cap\ell_j^u|\bigr)_{0\le i,j\le3}.
\]
The $i$th row of this matrix records how the four points of the block $P_i$
are distributed among the four lines of direction $u$.  Each row sum is $4$
because $P_i$ has four points.  Each column sum is also $4$ because every line
$\ell_j^u$ has four points and the four blocks partition the plane.

If $u=\delta(P_i)$, then two lines of direction $u$ contain two points of
$P_i$ each, while the other two lines contain no point of $P_i$.  Thus the
$i$th matrix row is a rearrangement of
\[
 (2,2,0,0).
\]
If $u\ne\delta(P_i)$, then one line contains two points of $P_i$, two lines
contain one point each, and one line contains no point.  The corresponding
matrix row is therefore a rearrangement of
\[
 (2,1,1,0).
\]

\begin{proposition}[Multiplicity patterns of doubled directions]
\label{prop:d4-partition-patterns}
For every direction-complete partition, the multiplicity pattern of its four
doubled directions is one of
\[
 (4),\qquad(2,2),\qquad(2,1,1),\qquad(1,1,1,1).
\]
The pattern $(3,1)$ cannot occur.
\end{proposition}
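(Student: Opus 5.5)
The plan is a parity argument on the incidence matrix $M^u(\mathcal P)$ introduced just before the statement. The list of five formal multiplicity patterns of $\delta(P_0),\dots,\delta(P_3)$ is exhaustive, so the claim reduces to showing that $(3,1)$ is impossible. The proposition asserts only that the pattern lies in the displayed set, not that each of $(4),(2,2),(2,1,1),(1,1,1,1)$ is realized, so once $(3,1)$ is excluded nothing else needs checking.

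Suppose, for contradiction, that $\mathcal P=\{P_0,P_1,P_2,P_3\}$ is a direction-complete partition with $\delta(P_0)=\delta(P_1)=\delta(P_2)=u$ and $\delta(P_3)=v\neq u$. I will work with the single matrix $M^u(\mathcal P)$, whose rows are indexed by blocks and whose columns are indexed by the four lines of direction $u$.

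\begin{itemize}
\item By the discussion following Lemma~\ref{lem:d4-direction-multiplicities} and Proposition~\ref{prop:d4-row-column-multiplicity}, the rows of $P_0,P_1,P_2$ are rearrangements of $(2,2,0,0)$, since $u$ is their doubled direction. So every entry in these three rows is even.
\item The row of $P_3$ is a rearrangement of $(2,1,1,0)$, since $u\neq\delta(P_3)$. So it has exactly two odd entries.
\item Every column sum of $M^u(\mathcal P)$ equals $4$, because each line $\ell^u_j$ has four points and the blocks partition $\mathbb F_4^2$.
\end{itemize}

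Reducing a column containing an odd entry of the $P_3$-row modulo $2$ gives
\[
 4\equiv 0+0+0+1 \pmod 2,
\]
which is a contradiction. Hence $(3,1)$ cannot occur.

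The same parity count shows more generally that, for each direction $u$, the number of blocks whose doubled direction differs from $u$ must be compatible with every column sum being even. In $(4)$ and $(2,2)$ this number is $0$ or $2$, and in $(2,1,1)$ and $(1,1,1,1)$ it is $2$, $3$ or $4$; parity alone does not rule these out, which is consistent with the statement.

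The only step needing care, and so the main one, is justifying the row shapes $(2,2,0,0)$ and $(2,1,1,0)$ for a direction-complete four-point block. These come from Lemma~\ref{lem:d4-direction-multiplicities}: the doubled direction is represented by two disjoint pairs, no three points are collinear, and every other direction is represented by exactly one pair, giving line intersections $2,1,1,0$. That was already recorded in the text before the proposition, so I expect no real obstacle.
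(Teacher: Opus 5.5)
Your parity argument on $M^u(\mathcal P)$ is correct and is essentially the paper's own proof: the three $(2,2,0,0)$ rows contribute only even entries, so the single $(2,1,1,0)$ row forces an odd column sum, contradicting the column sum $4$. The only slip is in your side remark, where the number of blocks whose doubled direction differs from $u$ is $0$ or $4$ for $(4)$, $2$ or $4$ for $(2,2)$, and $3$ or $4$ for $(1,1,1,1)$; this does not affect the proof, whose real content is that this number can never equal $1$.
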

\begin{proof}
Assume that three blocks have one common doubled direction, say $u$, while
the fourth block has a different doubled direction.  In the matrix
$M^u(\mathcal P)$, the three rows corresponding to the first three blocks are
rearrangements of $(2,2,0,0)$.  Their entrywise sum is therefore even in every
column.  The remaining row is a rearrangement of $(2,1,1,0)$ and has exactly
two odd entries.  After this row is added, exactly two column sums are odd.
This is impossible because every column sum of $M^u(\mathcal P)$ equals $4$.
Hence the pattern $(3,1)$ does not occur, leaving the four patterns displayed
in the statement.
\end{proof}

\begin{corollary}[Possible row and column double-repetition profiles]
\label{cor:d4-anti-latin-profiles}
For every saturated family of five mutually orthogonal anti-Latin squares of
order $4$, both its row double-repetition profile and its column
double-repetition profile belong to
\[
 (4),\qquad(2,2),\qquad(2,1,1),\qquad(1,1,1,1).
\]
In particular, one square cannot have pattern $(2,2)$ in exactly three of the
four rows while another square has pattern $(2,2)$ in the remaining row.  The
analogous statement holds for the four columns.
\end{corollary}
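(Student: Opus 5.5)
The plan is to reduce the corollary to Proposition~\ref{prop:d4-partition-patterns} by transporting the row and column structure of the family to $AG(2,4)$. Let $\mathcal F$ be a saturated family of five mutually orthogonal anti-Latin squares of order $4$. By Corollary~\ref{CO1}, its associated $(4,5)$-net is an affine plane of order $4$. The affine plane of order $4$ is unique up to isomorphism; this is classical, and it is already implicit in the specialization of Subsection~\ref{subsubsec:d4-geometric-recap}. We may therefore fix a bijection $\tau:[4]^2\to\mathbb F_4^2$ carrying the net onto the parallel classes of $AG(2,4)$, and index the squares by directions $u\in\mathbb F_4\cup\{\infty\}$.

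Next, set $R_i:=\tau(\{i\}\times[4])$ and $C_j:=\tau([4]\times\{j\})$. By Theorem~\ref{TH4}, and in its finite-field form Proposition~\ref{prop:finite-field-saturated-anti-latin}, every $R_i$ and every $C_j$ is a direction-complete four-point set. Since the rows of the grid partition the cells and $\tau$ is a bijection, $\mathcal R=\{R_0,\dots,R_3\}$ is a partition of $\mathbb F_4^2$ into direction-complete blocks. Thus $\mathcal R$ is a direction-complete partition in the sense of Subsection~\ref{subsubsec:d4-partitions}, and likewise for $\mathcal C$.

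By Proposition~\ref{prop:d4-row-column-multiplicity}, the index $u_{\rm row}(i)$ equals $\delta(R_i)$, and $u_{\rm col}(j)$ equals $\delta(C_j)$. Hence the row double-repetition profile of $\mathcal F$ is exactly the multiplicity pattern of $\delta(R_0),\dots,\delta(R_3)$, and similarly for the columns. Proposition~\ref{prop:d4-partition-patterns} then restricts both patterns to $(4),(2,2),(2,1,1),(1,1,1,1)$.

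The "in particular" clause is the exclusion of the pattern $(3,1)$. Suppose one square $A^{(u)}$ had pattern $(2,2)$ in exactly three rows, and another square $A^{(v)}$ with $v\ne u$ had pattern $(2,2)$ in the remaining row. Using the uniqueness in Proposition~\ref{prop:d4-row-column-multiplicity}, the profile would be $u,u,u,v$, which has pattern $(3,1)$, a contradiction. The column statement follows by the same argument applied to $\mathcal C$.

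The only step needing care is the identification of the associated plane with $AG(2,4)$, which depends on the uniqueness of the affine plane of order $4$. If one prefers to avoid it, the parity argument of Proposition~\ref{prop:d4-partition-patterns} can be run directly in an abstract affine plane of order $4$. That argument uses only line-intersection counts and the fact that each block determines all five parallel classes, so no coordinates are needed.
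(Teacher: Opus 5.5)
Your proof is correct and is essentially the paper's argument: you identify $u_{\rm row}(i)$ with $\delta(R_i)$ and $u_{\rm col}(j)$ with $\delta(C_j)$ via Proposition~\ref{prop:d4-row-column-multiplicity}, then apply the parity exclusion of $(3,1)$ from Proposition~\ref{prop:d4-partition-patterns} to the row-block and column-block partitions. Your two extra remarks are accurate and only make explicit what the paper takes for granted: the identification with $AG(2,4)$ rests on the uniqueness of the affine plane of order $4$, and the parity argument works unchanged in an abstract affine plane of order $4$.
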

\begin{proof}
By Proposition~\ref{prop:d4-row-column-multiplicity}, the distinguished square
for a row or column is indexed by the doubled direction of the corresponding
block.  The conclusion therefore follows from
Proposition~\ref{prop:d4-partition-patterns}.
\end{proof}

\subsubsection{Geometric realizations of the four multiplicity patterns}
\label{subsubsec:d4-profile-realizations}
It remains to show that each of the four multiplicity patterns in
Proposition~\ref{prop:d4-partition-patterns} occurs for a direction-complete
partition.  These examples are geometric candidates for a row-block or
column-block partition of a saturated anti-Latin family.  They do not, by
themselves, assert that the displayed partition has a compatible mate.  We
verify the doubled directions directly.

For the verification tables below, if a block is displayed as
\[
 B=\{p_1,p_2,p_3,p_4\},
\]
we list its six pair directions in the order
\[
 \operatorname{dir}(p_1,p_2),\quad
 \operatorname{dir}(p_1,p_3),\quad
 \operatorname{dir}(p_1,p_4),\quad
 \operatorname{dir}(p_2,p_3),\quad
 \operatorname{dir}(p_2,p_4),\quad
 \operatorname{dir}(p_3,p_4).
\]
A row of a verification table contains all five directions and repeats exactly
the direction shown in its last column.  Thus the table verifies both that the
block is direction-complete and that the stated direction is its doubled
direction.

\begin{example}[A partition with profile $(4)$]
\label{ex:d4-pattern-4}
Define
\begin{align*}
P_0^{(4)}&:=
 \{(0,0),(0,1),(1,0),(1,\omega)\},\\
P_1^{(4)}&:=
 \{(0,\omega),(0,\omega+1),(1,1),(1,\omega+1)\},\\
P_2^{(4)}&:=
 \{(\omega,0),(\omega,1),(\omega+1,0),(\omega+1,\omega)\},\\
P_3^{(4)}&:=
 \{(\omega,\omega),(\omega,\omega+1),(\omega+1,1),
   (\omega+1,\omega+1)\}.
\end{align*}
The four sets are pairwise disjoint and their union is $\mathbb F_4^2$.
Therefore
\[
 \mathcal P^{(4)}
 :=\{P_0^{(4)},P_1^{(4)},P_2^{(4)},P_3^{(4)}\}
\]
is a partition.  Its pair directions are
\[
\begin{array}{c|c|c}
\text{block}&\text{directions of the six pairs}&\text{doubled direction}\\
\hline
P_0^{(4)}&\infty,0,\omega,1,\omega+1,\infty&\infty\\
P_1^{(4)}&\infty,\omega+1,1,\omega,0,\infty&\infty\\
P_2^{(4)}&\infty,0,\omega,1,\omega+1,\infty&\infty\\
P_3^{(4)}&\infty,\omega+1,1,\omega,0,\infty&\infty.
\end{array}
\]
Thus all four blocks have the same doubled direction, and
$\mathcal P^{(4)}$ has doubled-direction multiplicity pattern $(4)$.  Geometrically, this is
an admissible candidate for a row-block or column-block partition.  If it
occurs as a row-block partition of a saturated anti-Latin family, the same one of the five squares
has multiplicity pattern $(2,2)$ in all four rows, while each of the other
four squares has pattern $(2,1,1)$ in every row.  The same interpretation
holds for columns.
\end{example}

\begin{example}[A partition with profile $(2,2)$]
\label{ex:d4-pattern-22}
Define
\begin{align*}
P_0^{(2,2)}&:=
 \{(0,0),(0,1),(1,0),(1,\omega)\},\\
P_1^{(2,2)}&:=
 \{(0,\omega),(0,\omega+1),(1,1),(\omega,1)\},\\
P_2^{(2,2)}&:=
 \{(1,\omega+1),(\omega,0),(\omega+1,0),(\omega+1,\omega)\},\\
P_3^{(2,2)}&:=
 \{(\omega,\omega),(\omega,\omega+1),(\omega+1,1),
   (\omega+1,\omega+1)\}.
\end{align*}
Again the four sets are pairwise disjoint and cover $\mathbb F_4^2$, so they
form a partition $\mathcal P^{(2,2)}$.  The verification is
\[
\begin{array}{c|c|c}
\text{block}&\text{directions of the six pairs}&\text{doubled direction}\\
\hline
P_0^{(2,2)}&\infty,0,\omega,1,\omega+1,\infty&\infty\\
P_1^{(2,2)}&\infty,\omega+1,\omega,\omega,1,0&\omega\\
P_2^{(2,2)}&1,\omega,\omega+1,0,\omega,\infty&\omega\\
P_3^{(2,2)}&\infty,\omega+1,1,\omega,0,\infty&\infty.
\end{array}
\]
The doubled directions are $\infty,\omega,\omega,\infty$, so its doubled-direction multiplicity pattern is $(2,2)$.  Geometrically,
this is an admissible candidate for a row-block or column-block partition.
If it occurs as a row-block partition of a saturated anti-Latin family, two of the five squares each have pattern
$(2,2)$ in two rows, while the remaining three squares have no row of pattern
$(2,2)$.  The same interpretation holds for columns.
\end{example}

\begin{example}[A partition with profile $(2,1,1)$]
\label{ex:d4-pattern-211}
Define
\begin{align*}
P_0^{(2,1,1)}&:=
 \{(0,0),(0,1),(1,0),(1,\omega)\},\\
P_1^{(2,1,1)}&:=
 \{(0,\omega),(0,\omega+1),(1,1),(\omega,1)\},\\
P_2^{(2,1,1)}&:=
 \{(1,\omega+1),(\omega,0),(\omega+1,0),
   (\omega+1,\omega+1)\},\\
P_3^{(2,1,1)}&:=
 \{(\omega,\omega),(\omega,\omega+1),(\omega+1,1),
   (\omega+1,\omega)\}.
\end{align*}
These four sets form a partition $\mathcal P^{(2,1,1)}$.  Their pair directions
are
\[
\begin{array}{c|c|c}
\text{block}&\text{directions of the six pairs}&\text{doubled direction}\\
\hline
P_0^{(2,1,1)}&\infty,0,\omega,1,\omega+1,\infty&\infty\\
P_1^{(2,1,1)}&\infty,\omega+1,\omega,\omega,1,0&\omega\\
P_2^{(2,1,1)}&1,\omega,0,0,\omega+1,\infty&0\\
P_3^{(2,1,1)}&\infty,\omega+1,0,\omega,1,\infty&\infty.
\end{array}
\]
The doubled directions are $\infty,\omega,0,\infty$.  Hence one direction
occurs twice and two further directions occur once, giving doubled-direction multiplicity pattern $(2,1,1)$.  Geometrically,
this is an admissible candidate for a row-block or column-block partition.
If it occurs as a row-block partition of a saturated anti-Latin family, one square has pattern $(2,2)$ in two
rows, and two further squares have pattern $(2,2)$ in one row each.  The same
interpretation holds for columns.
\end{example}

\begin{example}[A partition with profile $(1,1,1,1)$]
\label{ex:d4-pattern-1111}
Define
\begin{align*}
P_0^{(1,1,1,1)}&:=
 \{(0,0),(0,1),(1,0),(1,\omega)\},\\
P_1^{(1,1,1,1)}&:=
 \{(0,\omega),(0,\omega+1),(\omega,0),(\omega+1,0)\},\\
P_2^{(1,1,1,1)}&:=
 \{(1,1),(\omega,1),(\omega,\omega+1),
   (\omega+1,\omega+1)\},\\
P_3^{(1,1,1,1)}&:=
 \{(1,\omega+1),(\omega,\omega),(\omega+1,1),
   (\omega+1,\omega)\}.
\end{align*}
These four sets form a partition $\mathcal P^{(1,1,1,1)}$.  The verification is
\[
\begin{array}{c|c|c}
\text{block}&\text{directions of the six pairs}&\text{doubled direction}\\
\hline
P_0^{(1,1,1,1)}&\infty,0,\omega,1,\omega+1,\infty&\infty\\
P_1^{(1,1,1,1)}&\infty,1,\omega+1,\omega,1,0&1\\
P_2^{(1,1,1,1)}&0,\omega+1,1,\infty,\omega,0&0\\
P_3^{(1,1,1,1)}&\omega,1,\omega+1,\omega+1,0,\infty&\omega+1.
\end{array}
\]
The four doubled directions are mutually distinct.  Therefore its doubled-direction multiplicity pattern is $(1,1,1,1)$.
Geometrically, this is an admissible candidate for a row-block or column-block
partition.  If it occurs as a row-block partition of a saturated anti-Latin
family, four distinct squares each have pattern
$(2,2)$ in one row, while the fifth square has pattern $(2,1,1)$ in all four
rows.  The same interpretation holds for columns.
\end{example}

Combining Proposition~\ref{prop:d4-partition-patterns} with
Examples~\ref{ex:d4-pattern-4}--\ref{ex:d4-pattern-1111}, we obtain the exact
list of possible profiles.

\begin{corollary}[Realized doubled-direction multiplicity patterns]
\label{cor:d4-partition-patterns}
A direction-complete partition has exactly one of the four
doubled-direction multiplicity patterns
\[
 (4),\qquad(2,2),\qquad(2,1,1),\qquad(1,1,1,1),
\]
and each of these four patterns occurs.
\end{corollary}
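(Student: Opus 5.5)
The corollary has two halves, and I will prove them separately. The first half says the doubled-direction pattern of a direction-complete partition lies in $\{(4),(2,2),(2,1,1),(1,1,1,1)\}$. The second half says each of these four patterns actually occurs.

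For the first half, I start from Lemma~\ref{lem:d4-direction-multiplicities}. It shows that every block $P_i$ of a direction-complete partition has a well-defined doubled direction $\delta(P_i)$. The four values $\delta(P_0),\dots,\delta(P_3)$ therefore have a multiplicity pattern that is a partition of the integer $4$. These patterns are exactly the five listed in the table preceding Proposition~\ref{prop:d4-partition-patterns}. That proposition rules out $(3,1)$ by the parity argument on the matrix $M^u(\mathcal P)$. So the pattern is one of the remaining four, and "exactly one" holds automatically because the multiplicity pattern of a $4$-tuple is unique.

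For the second half, I point to Examples~\ref{ex:d4-pattern-4}, \ref{ex:d4-pattern-22}, \ref{ex:d4-pattern-211} and \ref{ex:d4-pattern-1111}. For each example two things must be confirmed.
\begin{enumerate}
\item The four displayed sets are pairwise disjoint and cover all sixteen points of $\mathbb F_4^2$. I would check this by listing the points grouped by $x$-coordinate.
\item In each block, the six pair directions include all five directions and repeat exactly the stated one. This is the content of the verification tables. Each entry can be recomputed with $\operatorname{dir}$, using $\omega^2=\omega+1$ and $\omega^{-1}=\omega+1$.
\end{enumerate}
Together these show each example is a direction-complete partition with the displayed doubled directions, hence with the claimed pattern.

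The only genuine work is recomputing the twenty-four direction entries, which is where an arithmetic slip in $\mathbb F_4$ could occur. I would spot-check the blocks that are not translates of $P_0$, such as $P_2^{(2,1,1)}$ and $P_3^{(1,1,1,1)}$. For these I would recompute slopes like $(0-(\omega+1))/(\omega-1)=(\omega+1)/(\omega+1)=1$ directly. The rest of the argument is bookkeeping.
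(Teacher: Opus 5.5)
Your proposal is correct and is essentially the paper's argument: there the corollary follows by combining Proposition~\ref{prop:d4-partition-patterns}, which excludes $(3,1)$ by the parity count on $M^u(\mathcal P)$, with Examples~\ref{ex:d4-pattern-4}--\ref{ex:d4-pattern-1111}. The covering of $\mathbb F_4^2$ and the direction tables in those examples do check out under $\omega^2=\omega+1$, including the slope you spot-checked in $P_2^{(2,1,1)}$.
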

This result classifies only the multiplicities of the four doubled directions.
It does not assert that two partitions with the same multiplicity pattern are
isomorphic.  Indeed, the relative positions of their blocks may contain
further geometric information not recorded by this pattern.  Nor do the
examples alone show that every pattern occurs as the row or column profile of
an actual saturated anti-Latin family, because compatibility with a second
partition is an additional condition.
\subsubsection{Compatible row and column structures}
\label{subsubsec:d4-compatible-pairs}

The preceding analysis concerns one partition of the affine plane into four
direction-complete blocks.  The compatibility between the row-block and
column-block partitions is not specific to order $4$: it is part of the
general anti-coordinate grid structure established in
Theorem~\ref{TH4}.  In the present $AG(2,4)$ representation, each row of the
common cell grid meets each column in exactly one cell, so every row-block
meets every column-block in exactly one point.

\begin{definition}[Compatible pair]
\label{def:d4-compatible-pair}
Let
\[
 \mathcal R=\{R_0,R_1,R_2,R_3\},\qquad
 \mathcal C=\{C_0,C_1,C_2,C_3\}
\]
be direction-complete partitions of $\mathbb F_4^2$.  We call $\mathcal R$
the \emph{row-block partition} and $\mathcal C$ the \emph{column-block
partition}.  The ordered pair $(\mathcal R,\mathcal C)$ is called a
\emph{compatible pair} if
\[
 |R_i\cap C_j|=1
 \qquad(i,j\in[4]).
\]
\end{definition}

For a compatible pair, let $x_{i,j}$ denote the unique point of
$R_i\cap C_j$.  Since the four row-blocks and four column-blocks are
partitions of the same sixteen-point set, the map
\[
 [4]^2\longrightarrow\mathbb F_4^2,
 \qquad
 (i,j)\longmapsto x_{i,j},
\]
is a bijection.  Thus the points $x_{i,j}$ may be regarded as the cells of an
external $4\times4$ grid, with $R_i$ as its $i$th row and $C_j$ as its $j$th
column.

\begin{proposition}[Compatible-pair characterization]
\label{prop:d4-compatible-pair-characterization}
Giving a saturated family of five mutually orthogonal anti-Latin squares of
order $4$, together with an identification of its associated affine plane
with $AG(2,4)$, is equivalent to giving the following data:
\begin{enumerate}
\item a compatible pair $(\mathcal R,\mathcal C)$ of direction-complete
partitions of $\mathbb F_4^2$;
\item for each of the five parallel classes, a labeling of its four lines by
the four symbols in $[4]$.
\end{enumerate}
\end{proposition}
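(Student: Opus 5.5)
The plan is to specialize Theorem~\ref{TH4} and Corollary~\ref{CO2} to $d=4$ and to the fixed model $AG(2,4)$, and to read off the two constructions as mutually inverse maps. In one direction we start from a saturated family $\mathcal F=\{A^{(t)}\}_{t=1}^5$ together with an identification of its associated affine plane with $AG(2,4)$. By Corollary~\ref{CO1} and Lemma~\ref{lem:balanced-orthogonal-net}, such an identification is a bijection $\tau:[4]^2\to\mathbb F_4^2$ carrying the symbol partitions of the five squares onto the five parallel classes. We index the squares by $u\in\mathbb F_4\cup\{\infty\}$ accordingly and set $R_i:=\tau(\{i\}\times[4])$ and $C_j:=\tau([4]\times\{j\})$. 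Because $\tau$ is a bijection, $|R_i\cap C_j|=1$. By Proposition~\ref{prop:finite-field-saturated-anti-latin}, every $R_i$ and $C_j$ is direction-complete, so $(\mathcal R,\mathcal C)$ is a compatible pair of direction-complete partitions. The labeling datum is the map sending each line $L$ of class $\Gamma_u$ to the symbol $a$ with $\tau^{-1}(L)=\operatorname{supp}_{A^{(u)}}(a)$. Balance and the net property make this a bijection from $\Gamma_u$ to $[4]$.

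For the other direction we are given $(\mathcal R,\mathcal C)$ and labelings $\lambda_u:\Gamma_u\to[4]$. Let $x_{i,j}$ be the unique point of $R_i\cap C_j$. The map $\tau(i,j)=x_{i,j}$ is a bijection, as already noted before the statement. We define $A^{(u)}_{i,j}:=\lambda_u(\text{line of }\Gamma_u\text{ through }x_{i,j})$. This is exactly the converse construction in the proof of Theorem~\ref{TH4}, so the five arrays are balanced and pairwise orthogonal. Direction-completeness of each block, translated through Proposition~\ref{prop:finite-field-saturated-anti-latin}, gives a repeated symbol in every row and column. Hence we obtain a saturated family, and $\tau$ identifies its net with that of $AG(2,4)$.

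It then remains to check that the two constructions are mutually inverse. This amounts to unwinding definitions: starting from $\mathcal F$, the rebuilt arrays have the same symbol supports with the same labels, so they equal $A^{(u)}$; starting from the data, the rows and columns of the rebuilt grid return $R_i$ and $C_j$, and the labels return $\lambda_u$.

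The main obstacle is bookkeeping rather than mathematics: we must state precisely what ``identification'' means. We take it to be the bijection $\tau$ together with the induced matching of the squares with the directions. Only with this convention does the correspondence become an honest bijection, rather than a bijection up to relabeling the squares or applying affine automorphisms. If the intended statement allows these ambiguities, we instead state equivalence modulo the corresponding group actions. Nothing specific to order $4$ is needed beyond Proposition~\ref{prop:finite-field-saturated-anti-latin}.
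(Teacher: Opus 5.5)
Your proposal is correct and follows essentially the same route as the paper: you pull the grid rows and columns back through the identification to get the compatible pair, use the anti-Latin repetition in each of the five squares to get direction-completeness, and conversely rebuild the squares by the line-membership rule of the converse of Theorem~\ref{TH4}. Your explicit mutual-inverse check, together with your convention that the identification is the bijection $\tau$ plus the induced matching of squares to directions, makes precise what the paper states only briefly as ``inverse up to the chosen line labels and the stated identification.''
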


\begin{proof}
Suppose first that a saturated family is given, together with the stated
identification of its associated affine plane with $AG(2,4)$.  For each
$i\in[4]$, let $R_i$ be the image in $AG(2,4)$ of row $i$ of the common
$4\times4$ cell grid, and for each $j\in[4]$, let $C_j$ be the image of column
$j$.  The four sets $R_i$ form a partition of the sixteen points, and so do
the four sets $C_j$.  Each row of the cell grid meets each column in exactly
one cell.  Therefore
\[
 |R_i\cap C_j|=1
 \qquad(i,j\in[4]).
\]
Moreover, fix one of the five directions. 
The corresponding anti-Latin square has at least one repeated symbol in every row and every column. 
Hence every row-block $R_i$ and every column-block $C_j$ contains two points on one line of that direction.

Since this holds for all five directions, all row-blocks
and column-blocks are direction-complete.  Therefore
$(\mathcal R,\mathcal C)$ is a compatible pair.  The symbols of each square
provide the required labels of the four lines in the corresponding parallel
class.
Conversely, suppose that a compatible pair and the five line labelings are
given.  Let $x_{i,j}$ be the unique point of $R_i\cap C_j$.  For each direction
$u\in\mathbb F_4\cup\{\infty\}$, define a $4\times4$ array $A^{(u)}$ by
\[
 A^{(u)}_{i,j}
 :=
 \text{the label of the unique line of direction $u$ containing $x_{i,j}$}.
\]
We verify the three required properties.

First, $A^{(u)}$ is balanced.  Each symbol labels one line of direction $u$,
and that line contains four points, so the symbol occurs exactly four times.
Second, if $u\ne v$, a line of direction $u$ and a line of direction $v$ meet
in exactly one point.  Hence every ordered pair of symbols occurs exactly
once in the superposition of $A^{(u)}$ and $A^{(v)}$, and the two arrays are
orthogonal.  Third, fix a row-block $R_i$ and a direction $u$.  Since $R_i$ is
direction-complete, it contains two points on one line of direction $u$.
The corresponding two entries in row $i$ of $A^{(u)}$ have the same line
label.  Thus every row contains a repeated symbol.  The same argument applied
to each column-block $C_j$ shows that every column contains a repeated symbol.
Therefore every $A^{(u)}$ is anti-Latin, and the five arrays form a saturated
family.

The two constructions are inverse up to the chosen line labels and the stated
identification with $AG(2,4)$.
\end{proof}
Thus every saturated family has an ordered pair consisting of its row and
column double-repetition profiles.  These profiles record genuine order-$4$
repetition structure, but they do not determine the compatible pair up to
isomorphism.

\subsubsection{Inherited general structure and additional order-$4$ consequences}
\label{subsubsec:d4-structural-summary}
We finally separate the structure inherited from the general saturated
correspondence from the additional conclusions obtained only in order $4$.

\begin{proposition}[General saturated structure specialized to order $4$]
\label{prop:d4-inherited-structure}
Let $\mathcal F$ be a saturated family of five mutually orthogonal anti-Latin
squares of order $4$.  Its five symbol partitions form the five parallel
classes of an affine plane of order $4$, and its row-block and column-block
partitions form a compatible anti-coordinate grid decomposition.  After
fixing an identification of the associated affine plane with $AG(2,4)$, each
row-block and each column-block determines all five directions.
\end{proposition}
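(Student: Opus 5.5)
The proof is essentially a chaining of earlier results, so I will assemble it in three steps.

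First, I will apply Lemma~\ref{lem:balanced-orthogonal-net} to the five squares of $\mathcal F$. They are anti-Latin, hence balanced, and pairwise orthogonal, so their symbol partitions form a $(4,5)$-net on $[4]^2$. Since $5=d+1$, the classical equivalence invoked in Corollary~\ref{CO1} identifies this complete net with an affine plane $\Pi$ of order $4$. Its lines are the symbol classes, and its parallel classes are the five symbol partitions.

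Second, I will apply the implication (i)$\Rightarrow$(ii) of Theorem~\ref{TH4}, taking $\tau$ to be the identity on $[4]^2$ (viewed as the point set of $\Pi$). This shows that the literal row-blocks $\{i\}\times[4]$ and column-blocks $[4]\times\{j\}$ form a grid decomposition that is anti-coordinate. The reason is that each row or column of each square contains a repeated symbol, so no block is a transversal of any parallel class. Compatibility is simply $|R_i\cap C_j|=1$, which holds because a row and a column of the grid meet in exactly one cell.

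Third, I fix an isomorphism of $\Pi$ with $AG(2,4)$. The existence of such an isomorphism rests on the classical uniqueness of the affine plane of order $4$, which is implicit in the statement's phrase ``after fixing an identification.'' This isomorphism transports the five parallel classes to the five slope classes indexed by $\mathbb F_4\cup\{\infty\}$. Proposition~\ref{prop:finite-field-saturated-anti-latin}, equivalently Corollary~\ref{CO2}, then shows that each transported row-block and column-block has a repeated $\pi_u$-value for every $u$. In other words, every block determines all five directions.

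The only step needing care is this last one. An arbitrary identification must carry parallel classes to parallel classes, which holds automatically because an isomorphism of affine planes preserves parallelism. Also, non-transversality is invariant under such a transport, so the conclusion does not depend on which identification is chosen.
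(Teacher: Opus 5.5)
Your proposal is correct and follows the same route as the paper, whose proof simply cites Theorem~\ref{TH4} for the affine-plane and anti-coordinate-grid statements and Proposition~\ref{prop:finite-field-saturated-anti-latin} with $q=4$ for direction-completeness. The only difference is that you spell out steps the paper leaves implicit: you obtain the plane via Lemma~\ref{lem:balanced-orthogonal-net} and Corollary~\ref{CO1}, and you note that the identification with $AG(2,4)$ exists (by classical uniqueness of the affine plane of order $4$) and preserves parallel classes.
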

\begin{proof}
The affine-plane and anti-coordinate-grid statements are
Theorem~\ref{TH4}.  The direction-completeness statement is the specialization
of Proposition~\ref{prop:finite-field-saturated-anti-latin} to $q=4$.
\end{proof}

\begin{theorem}[Additional structure specific to order $4$]
\label{thm:d4-saturated-structure}
Under the identification in
Proposition~\ref{prop:d4-inherited-structure}, the following statements hold.
\begin{enumerate}
\item Every row-block and column-block $X$ has a unique doubled direction
$\delta(X)$.
\item In the square indexed by $\delta(X)$, the four entries in the
corresponding row or column have multiplicity pattern $(2,2)$.  In each of
the other four squares, they have pattern $(2,1,1)$.
\item Every row-block and column-block is affinely equivalent to
\[
 \{(0,0),(1,0),(0,1),(\omega,1)\}.
\]
\item The row double-repetition profile and the column double-repetition
profile are each one of
\[
 (4),\qquad(2,2),\qquad(2,1,1),\qquad(1,1,1,1).
\]
In particular, profile $(3,1)$ is impossible.
\end{enumerate}
\end{theorem}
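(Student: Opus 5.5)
The plan is to assemble the theorem from the order-$4$ results already established, once we know that every row-block and column-block is a direction-complete four-point subset of $AG(2,4)$. That input is supplied by Proposition~\ref{prop:d4-inherited-structure}, itself the $q=4$ case of Proposition~\ref{prop:finite-field-saturated-anti-latin}. So the first step fixes the identification of the associated affine plane with $AG(2,4)$. It then records that each $R_i$ and each $C_j$ has four points and determines all five directions.

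Items (1) and (2) follow directly. For (1), apply Lemma~\ref{lem:d4-direction-multiplicities} to each block $X$. Six unordered pairs must cover five directions, so exactly one direction occurs twice; this defines $\delta(X)$ uniquely. For (2), invoke Proposition~\ref{prop:d4-row-column-multiplicity}. In the square indexed by $\delta(X)$, the two doubled pairs are disjoint and there are no three collinear points, so the line intersection sizes are $2,2,0,0$. This gives the symbol pattern $(2,2)$. For every other direction $u$, exactly one pair lies on a common line of direction $u$, so the intersection sizes are $2,1,1,0$. This gives the pattern $(2,1,1)$. The one point to spell out is that line labels in the square indexed by $u$ correspond bijectively to the lines of direction $u$. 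That holds by the construction in Theorem~\ref{TH4}, so line intersection sizes translate directly into symbol multiplicities.

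Item (3) is Proposition~\ref{prop:d4-single-affine-orbit} applied to each block. The only care needed is the normalization step, where the two disjoint pairs of direction $\delta(X)$ are sent to $\{(0,0),(1,0)\}$ and $\{(0,1),(t,1)\}$. One then excludes $t=0$, since the points would coincide, and $t=1$, since a parallelogram determines only four directions. The affine map $(x,y)\mapsto(\omega x,y+1)$ identifies the two survivors $t=\omega,\omega+1$.

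Item (4) comes from Proposition~\ref{prop:d4-partition-patterns} applied to $\mathcal P=\mathcal R$ and $\mathcal P=\mathcal C$. Together with the identification $u_{\rm row}(i)=\delta(R_i)$ and $u_{\rm col}(j)=\delta(C_j)$ from item (2), this is Corollary~\ref{cor:d4-anti-latin-profiles}. The main obstacle, to the extent there is one, is this parity exclusion of the profile $(3,1)$. One looks at the incidence matrix $M^u(\mathcal P)$ for the direction $u$ shared by three blocks. Those three rows are rearrangements of $(2,2,0,0)$, the remaining row is a rearrangement of $(2,1,1,0)$, and so exactly two column sums are odd. This contradicts every column sum being $4$. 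Everything else is bookkeeping, so I would simply verify that the earlier propositions apply verbatim to the row- and column-block partitions produced by Theorem~\ref{TH4}.
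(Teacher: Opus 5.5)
Your proposal is correct and follows the paper's proof. The paper likewise obtains (1)--(2) from Lemma~\ref{lem:d4-direction-multiplicities} and Proposition~\ref{prop:d4-row-column-multiplicity}, (3) from Proposition~\ref{prop:d4-single-affine-orbit}, and (4) from Corollary~\ref{cor:d4-anti-latin-profiles} (the parity argument of Proposition~\ref{prop:d4-partition-patterns}), with direction-completeness of the blocks supplied by Proposition~\ref{prop:d4-inherited-structure}.

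One harmless slip, which the paper shares: in characteristic $2$ the diagonals of the parallelogram $X_1=\{(0,0),(1,0),(0,1),(1,1)\}$ are parallel. So $X_1$ determines only the three directions $0,1,\infty$, each twice, rather than four; this still excludes $t=1$.
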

\begin{proof}
The first two statements follow from
Lemma~\ref{lem:d4-direction-multiplicities} and
Proposition~\ref{prop:d4-row-column-multiplicity}.  The third is
Proposition~\ref{prop:d4-single-affine-orbit}, and the fourth is
Corollary~\ref{cor:d4-anti-latin-profiles}.
\end{proof}

\begin{remark}[Scope of the order-$4$ analysis]
The compatible anti-coordinate grid and direction-completeness statements in
Proposition~\ref{prop:d4-inherited-structure} are inherited from the general
theory.  The genuinely order-$4$ conclusions are the unique doubled
direction, the $(2,2)$ versus $(2,1,1)$ distinction, the affine normal form,
and the restrictions on the row and column double-repetition profiles.  We
do not classify all direction-complete partitions or all compatible pairs up
to isomorphism.
\end{remark}

\subsection{Final synthesis}
\label{subsec:final-synthesis}

We now combine the preceding results and obtain the proof of Theorem \ref{thm:main-exact-formula}
for $N_A(d)$.

For $d=3$, Theorem~\ref{thm:d3-classification} gives $N_A(3)=3$. Since
$N_L(3)=2$, this yields
\(
N_A(3)=N_L(3)+1.
\)

For $d=4,5,7$, the explicit constructions of Example~\ref{ex:d4-orbit10}
and Example~\ref{EX57} show that
\(
N_A(d)\ge d+1.
\)
Since these orders are prime powers, we have $N_L(d)=d-1$, and therefore
Theorem~\ref{thm:NA-upper-via-MOLS} gives
\(
N_A(d)\le N_L(d)+2=d+1.
\)
Hence
\(
N_A(d)=N_L(d)+2\) for 
$d=4,5,7$.

For $d=6$ and for all $d\ge 8$, the conclusion follows directly from
Theorem~\ref{thm:NA-equals-NL-plus-2-for-all-d-ge-8}. This completes the proof
of Theorem \ref{thm:main-exact-formula}.

\section{Conclusion}\label{sec:conclusion}

We have determined the extremal size $N_A(d)$ of a family of mutually orthogonal
anti-Latin squares in terms of the classical Latin-square quantity $N_L(d)$.
At the general level, the paper proves the universal comparison
\[
N_L(d)+1\le N_A(d)\le N_L(d)+2
\qquad (d\ge 3),
\]
where the upper bound is obtained by passing through balanced matrices and the
lower bound by a deterministic permutation argument
(Theorems~\ref{thm:NA-upper-via-MOLS} and~\ref{thm:3}).
The main theorem then resolves this two-point window completely:
\[
N_A(3)=N_L(3)+1,
\qquad
N_A(d)=N_L(d)+2
\quad (d\ge 4).
\]
Thus, apart from the single exceptional order $d=3$, the anti-Latin extremal
problem differs from the classical Latin-square problem by a fixed shift of two.

For large orders, the argument is uniform.
The probabilistic construction shows that the upper bound is attained for all
$d\ge 8$, and the same method also covers the exceptional order $d=6$
(Theorem~\ref{thm:NA-equals-NL-plus-2-for-all-d-ge-8}).
Accordingly, the only genuinely order-specific work is confined to the remaining
small cases.

Beyond the exact formula for $N_A(d)$, the geometric reformulation
separates a classical component from an additional anti-Latin structure.  The
classical component is the passage from a complete net to an affine plane.
The additional structure identified by Theorem~\ref{TH4} is a pair of
orthogonal resolutions whose every block is non-transversal to every parallel
class, equivalently an anti-coordinate grid decomposition.  This is a
coordinate-free condition formulated purely in the incidence language of
points, lines, and parallel classes, and at the level of this structural
characterization no distinction between Desarguesian and non-Desarguesian
affine planes \cite{JohnsonDulmageMendelsohn1961} is used.

In this sense, the paper refines the classical correspondence among affine
planes, nets, orthogonal arrays, and mutually orthogonal Latin squares by
identifying an additional structure that is specific to the anti-Latin
setting
\cite{Bruck1951,Dembowski,Stinson,BoseBush,AbelColbournDinitzMOLS,JohnsonDulmageMendelsohn1961,Evans2018}.
After transport to the fixed cell set $[d]^2$, this extra structure becomes
the requirement that the relevant row-blocks and column-blocks be
direction-complete, thereby linking mutually orthogonal anti-Latin squares to
the finite-geometric theory of directions determined by point sets
\cite{LovaszSchrijver,SzonyiDirections,SomlaiRedei,KissSomlaiSpecialDirections}.
The resulting problem is not the classical problem of determining the
direction set of one point set, but a simultaneous decomposition problem for
two compatible families of point sets.

The low-order picture is then completed as follows.
For $d=3$, we prove $N_A(3)=3$ and classify orthogonal triples into four weak
classes and one strong class
(Theorems~\ref{thm:main-exact-formula} and~\ref{thm:d3-classification}).
For $d=4$, saturation occurs and the explicit family in
Example~\ref{ex:d4-orbit10} gives $N_A(4)=5$.  The affine-plane and compatible
anti-coordinate-grid structure is inherited from the general saturated
correspondence.  The additional order-$4$ analysis shows that every row and
column has a unique square with symbol-multiplicity pattern $(2,2)$, while the
other four squares have pattern $(2,1,1)$.  It also gives an affine normal
form for the corresponding four-point blocks and restricts the possible
distributions of the exceptional $(2,2)$ pattern among the four rows and,
separately, among the four columns.
For the remaining prime-power orders treated explicitly in the paper, namely
$d=5$ and $d=7$, the randomized-grid method produces concrete saturated
examples (Example~\ref{EX57}).
Together with the large-order theorem, this determines $N_A(d)$ for every
order $3\le d\le 9$.
For convenience, Table~\ref{tab:small-d-summary} summarizes the exact values of
$N_A(d)$ for $3\le d\le 9$ together with the main results and methods used in
each case.

\begin{table}[t]
\centering
\caption{Summary of the determined values of $N_A(d)$ for $3\le d\le 9$.}
\label{tab:small-d-summary}
\begin{tabular}{@{}ccp{5.2cm}p{5.2cm}@{}}
\toprule
$d$ &  $N_A(d)$ & result(s) used & method / route \\
\midrule
3 & $3$ & Theorem~\ref{thm:main-exact-formula} and Theorem~\ref{thm:d3-classification} & small-order classification; exclusion of a fourth orthogonal square \\
4 & $5$ & Example~\ref{ex:d4-orbit10} and Theorem~\ref{thm:NA-upper-via-MOLS} & explicit saturated family, general upper bound, and finite-geometric analysis \\
5 & $6$ & Example~\ref{EX57} and Theorem~\ref{thm:NA-upper-via-MOLS} & prime-power case: explicit saturated family, together with the upper bound $N_A(d)\le N_L(d)+2=d+1$ \\
6 & $3$ & Theorem~\ref{thm:NA-equals-NL-plus-2-for-all-d-ge-8} and the classical equality $N_L(6)=1$ & probabilistic attainment of the upper bound in the exceptional composite order $d=6$ \\
7 & $8$ & Example~\ref{EX57} and Theorem~\ref{thm:NA-upper-via-MOLS} & prime-power case: explicit saturated family, together with the upper bound $N_A(d)\le N_L(d)+2=d+1$ \\
8 & $9$ & Theorem~\ref{thm:NA-equals-NL-plus-2-for-all-d-ge-8} and the classical equality $N_L(8)=7$ & large-order theorem, specialized to the prime-power order $d=8$ \\
9 & $10$ & Theorem~\ref{thm:NA-equals-NL-plus-2-for-all-d-ge-8} and the classical equality $N_L(9)=8$ & large-order theorem, specialized to the prime-power order $d=9$ \\
\bottomrule
\end{tabular}
\end{table}

In summary, mutually orthogonal anti-Latin squares admit both a clean extremal description and a clean geometric description.  Numerically, apart from the single exceptional order $d=3$, the maximum number of mutually orthogonal anti-Latin squares is always exactly two more than the corresponding Latin-square extremal number.  Geometrically, the saturated case is characterized by affine planes equipped with anti-coordinate grid decompositions.  This leaves a natural direction for future work: to understand for which affine planes such decompositions exist and how this additional structure interacts with the classical theory of affine planes, nets, and mutually orthogonal Latin squares.

\bmhead{Acknowledgements}

A part of this work was inspired by responses to an earlier Japan Math-Contest problem
on anti-Latin squares. The authors thank 
Mr. Tomoki Anzai,
Mr. Takahiro Hori,
Mr. Kinari Miyazawa,
Mr. Nobuto Muraki,
Mr. Yuma Wakatsuki,
Mr. Kirato Yata, and
Mr. Shin Yumoto for their interest in the problem and for the initial
stimulus provided by their submitted solutions. The results, constructions,
and proofs presented in this paper were developed subsequently by the authors
in a substantially different form.

MH was supported in part by the Kayamori Foundation of Informational Science
Advancement.  Large language model tools were used as auxiliary aids in
preparing this manuscript, including assistance with exposition, literature
search, and exploratory discussions of possible approaches.  The manuscript
was written under the authors' direction, and the authors are responsible for
all mathematical content, proofs, references, and conclusions.

\begin{appendices}

\section{Explicit computational examples for $d=5$ and $d=7$}\label{app:explicit57}
This appendix presents explicit saturated families for $d=5$ and $d=7$
generated by the randomized-grid method of Section~\ref{sec:random-grid}.
A uniformly random bijection $\tau:[d]^2\to\mathbb F_d^2$ is sampled; the grid
decomposition $(\mathcal R,\mathcal C)$ is formed by pulling back coordinate
fibres; every block is required to determine all $d+1$ directions; then
Theorem~\ref{TH4} (ii)$\Rightarrow$(i) reconstructs the $d+1$ arrays by line
membership (see also Corollary~\ref{CO2}).  This realizes an anti-coordinate
grid and yields a saturated family.
\subsection{Provenance and reproducibility} 
The displayed families below were generated by the randomized-grid procedure
described in Section~\ref{sec:random-grid}. The supplementary script
\texttt{random\_grid\_generate\_d5\_d7.py} samples random bijections
\(
\tau:[d]^2\to \mathbb F_d^2
\)
and tests whether all standard row and column-blocks are direction-complete.
When such a bijection is found, the script constructs the \(d+1\) arrays by the
line-membership rule of Theorem~\ref{TH4}, namely by using the labels of the
lines in the \(d+1\) parallel classes of the affine plane over \(\mathbb F_d\).
With seed \(\mathtt{20260317}\), the recorded run found successful grid
decompositions at trial \(34\) for \(d=5\) and at trial \(3\) for \(d=7\).
The same script outputs the generated arrays and verifies that each square is
anti-Latin and that the resulting family is pairwise orthogonal.

\begin{example}\label{EX57}
For $d=5$ (six arrays, $K=6=d+1$):
{\scriptsize
\[\begin{aligned}
& 
\left(\begin{array}{ccccc}
2 & 1 & 0 & 3 & 1\\
2 & 2 & 1 & 4 & 3\\
4 & 2 & 4 & 4 & 0\\
4 & 1 & 0 & 1 & 3\\
0 & 3 & 2 & 3 & 0
\end{array}\right)
;\;
\left(\begin{array}{ccccc}
0 & 1 & 2 & 3 & 0\\
2 & 3 & 4 & 4 & 0\\
0 & 1 & 3 & 1 & 3\\
2 & 2 & 0 & 3 & 1\\
4 & 4 & 4 & 2 & 1
\end{array}\right)
;\;\left(\begin{array}{ccccc}
3 & 0 & 2 & 0 & 4\\
0 & 1 & 3 & 0 & 2\\
1 & 4 & 4 & 2 & 3\\
3 & 1 & 0 & 2 & 3\\
4 & 1 & 2 & 4 & 1
\end{array}\right)
;\;\left(\begin{array}{ccccc}
1 & 4 & 2 & 2 & 3\\
3 & 4 & 2 & 1 & 4\\
2 & 2 & 0 & 3 & 3\\
4 & 0 & 0 & 1 & 0\\
4 & 3 & 0 & 1 & 1
\end{array}\right)
;\;
\left(\begin{array}{ccccc}
4 & 3 & 2 & 4 & 2\\
1 & 2 & 1 & 2 & 1\\
3 & 0 & 1 & 4 & 3\\
0 & 4 & 0 & 0 & 2\\
4 & 0 & 3 & 3 & 1
\end{array}\right)
;\;
\left(\begin{array}{ccccc}
2 & 2 & 2 & 1 & 1\\
4 & 0 & 0 & 3 & 3\\
4 & 3 & 2 & 0 & 3\\
1 & 3 & 0 & 4 & 4\\
4 & 2 & 1 & 0 & 1
\end{array}\right)
\end{aligned}\]
}
For $d=7$ (eight arrays, $K=8=d+1$):
{\scriptsize

\[\begin{aligned}
&\left(\begin{array}{ccccccc}
2 & 6 & 4 & 2 & 2 & 5 & 6\\
2 & 5 & 0 & 5 & 1 & 3 & 6\\
5 & 4 & 0 & 0 & 3 & 2 & 5\\
4 & 1 & 0 & 4 & 6 & 3 & 6\\
3 & 1 & 2 & 0 & 2 & 4 & 3\\
0 & 3 & 3 & 1 & 1 & 6 & 1\\
4 & 0 & 1 & 5 & 4 & 5 & 6
\end{array}\right)
;\;
\left(\begin{array}{ccccccc}
4 & 5 & 2 & 1 & 2 & 2 & 0\\
6 & 6 & 5 & 1 & 6 & 2 & 1\\
0 & 0 & 2 & 4 & 4 & 3 & 4\\
1 & 4 & 1 & 3 & 6 & 0 & 4\\
5 & 3 & 5 & 3 & 0 & 6 & 1\\
6 & 6 & 3 & 5 & 2 & 3 & 0\\
4 & 0 & 1 & 5 & 5 & 3 & 2
\end{array}\right)
;\;
\left(\begin{array}{ccccccc}
2 & 6 & 5 & 6 & 0 & 4 & 1\\
4 & 1 & 5 & 3 & 5 & 6 & 2\\
2 & 3 & 2 & 4 & 1 & 1 & 6\\
4 & 3 & 1 & 6 & 0 & 4 & 5\\
2 & 2 & 3 & 3 & 5 & 2 & 5\\
6 & 3 & 0 & 4 & 1 & 4 & 6\\
0 & 0 & 0 & 0 & 1 & 5 & 3
\end{array}\right)
;\;
\left(\begin{array}{ccccccc}
0 & 0 & 1 & 4 & 5 & 6 & 2\\
2 & 3 & 5 & 5 & 4 & 3 & 3\\
4 & 6 & 2 & 4 & 5 & 6 & 1\\
0 & 2 & 1 & 2 & 1 & 1 & 6\\
6 & 1 & 1 & 3 & 3 & 5 & 2\\
6 & 0 & 4 & 3 & 0 & 5 & 5\\
3 & 0 & 6 & 2 & 4 & 0 & 4
\end{array}\right)\\
&
\left(\begin{array}{ccccccc}
5 & 1 & 4 & 2 & 3 & 1 & 3\\
0 & 5 & 5 & 0 & 3 & 0 & 4\\
6 & 2 & 2 & 4 & 2 & 4 & 3\\
3 & 1 & 1 & 5 & 2 & 5 & 0\\
3 & 0 & 6 & 3 & 1 & 1 & 6\\
6 & 4 & 1 & 2 & 6 & 6 & 4\\
6 & 0 & 5 & 4 & 0 & 2 & 5
\end{array}\right)
;\;
\left(\begin{array}{ccccccc}
3 & 2 & 0 & 0 & 1 & 3 & 4\\
5 & 0 & 5 & 2 & 2 & 4 & 5\\
1 & 5 & 2 & 4 & 6 & 2 & 5\\
6 & 0 & 1 & 1 & 3 & 2 & 1\\
0 & 6 & 4 & 3 & 6 & 4 & 3\\
6 & 1 & 5 & 1 & 5 & 0 & 3\\
2 & 0 & 4 & 6 & 3 & 4 & 6
\end{array}\right)
;\;
\left(\begin{array}{ccccccc}
1 & 3 & 3 & 5 & 6 & 5 & 5\\
3 & 2 & 5 & 4 & 1 & 1 & 6\\
3 & 1 & 2 & 4 & 3 & 0 & 0\\
2 & 6 & 1 & 4 & 4 & 6 & 2\\
4 & 5 & 2 & 3 & 4 & 0 & 0\\
6 & 5 & 2 & 0 & 4 & 1 & 2\\
5 & 0 & 3 & 1 & 6 & 6 & 0
\end{array}\right)
;\;
\left(\begin{array}{ccccccc}
6 & 4 & 6 & 3 & 4 & 0 & 6\\
1 & 4 & 5 & 6 & 0 & 5 & 0\\
5 & 4 & 2 & 4 & 0 & 5 & 2\\
5 & 5 & 1 & 0 & 5 & 3 & 3\\
1 & 4 & 0 & 3 & 2 & 3 & 4\\
6 & 2 & 6 & 6 & 3 & 2 & 1\\
1 & 0 & 2 & 3 & 2 & 1 & 1
\end{array}\right).
\end{aligned}\]
}
\end{example}

\begin{remark}[Randomized grid provenance and verification]\label{rmk:rg-verify}
Each family above is the direct output of the randomized-grid reconstruction with the seeds indicated in the provenance paragraph. A minimal script that regenerates such a family (and the accompanying solver--free verifier) is provided with the supplementary material; the verifier checks the anti-Latin row/column property and pairwise orthogonality exactly as encoded in the proof of Theorem~\ref{TH4}. 
\end{remark}

\end{appendices}

\section*{Ethics approval and consent to participate}
Not applicable.

\section*{Consent for publication}
Not applicable.

\section*{Availability of supporting data}
For the randomized-grid constructions in the cases $d=5$ and $d=7$, the
supplementary material contains
\texttt{random\_grid\_generate\_d5\_d7.py} and
\texttt{random\_grid\_d5\_d7\_output.json}.
The script \texttt{random\_grid\_generate\_d5\_d7.py} implements the
randomized-grid procedure described in Section~\ref{sec:random-grid}: it
samples random bijections
\(
\tau:[d]^2\to \mathbb F_d^2,
\)
tests direction-completeness of all standard row and column-blocks, and
constructs the \(d+1\) arrays by the line-membership rule of
Theorem~\ref{TH4}. The JSON output records the successful bijections, trial
numbers, generated families, and verification results. 

\section*{Competing interests}
There are no competing interests.

\end{document}